\numberwithin{equation}{section}
\newtheorem{prop}{Proposition}[section]
\newtheorem{dfn}[prop]{Definition}
\newtheorem{lm}[prop]{Lemma}
\newtheorem{fact}[prop]{Fact}
\newtheorem{cor}[prop]{Corollary}
\newtheorem{thm}[prop]{Theorem}
\newtheorem{clm}[prop]{Claim}
\newtheorem{obs}[prop]{Observation}
\newtheorem{subclaim}{Subclaim}[prop]
\newtheorem{prob}[prop]{Problem}
\newcommand{\mc}[1]{\mathcal{#1}}
\newcommand{\mb}[1]{\mathbb{#1}}
\newcommand{\bs}{\mathbb{B}}
\newcommand{\oo}{\omega}
\newcommand{\uhr}{\upharpoonright}
\def\<{\left\langle}
\def\>{\right\rangle}
\def\br#1;#2;{\bigl[ {#1} \bigr]^ {#2} }
\newcommand{\ooone}{{{\omega}_1}}
\newcommand{\ooto}{{\ooone\times {\omega}}}
\newcommand{\mbb}[1]{\mathbb{#1}}
\newcommand{\pref}[1]{(P\ref{#1})}
\newcommand{\setm}{\setminus}
\newcommand{\empt}{\emptyset}
\newcommand{\subs}{\subset}
\newcommand{\dom}{\operatorname{dom}}
\newcommand{\ran}{\operatorname{ran}}
\newcommand{\supp}{\operatorname{supp}}
\newcommand{\rhop}{{\underline{\rho}}}
\title{Partitioning bases of topological spaces}
\author[D.T. Soukup]
{D\'aniel T. Soukup}
\address{University of Toronto}
\email{daniel.soukup@mail.utoronto.ca}
\urladdr{http://math.utoronto.ca/$\sim  $dsoukup}
\author[L. Soukup]
{Lajos Soukup*}\thanks
  {The preparation of this paper was partially
supported by  OTKA grant K 83726.} 
 \thanks{*Corresponding author}
\address
      {Alfr{\'e}d R{\'e}nyi Institute of Mathematics, Hungarian Academy of
Sciences, Budapest, Hungary  }
\email{soukup@renyi.hu}
\urladdr{http://www.renyi.hu/$\sim  $soukup}
\subjclass[2010]{54A35, 03E35, 54A25}
\keywords{base, resolvable, partition }
\begin{document}
\maketitle

 \begin{abstract}
We investigate whether an arbitrary base for a dense-in-itself topological space
can be partitioned into two bases. We prove that every base for a $T_3$
Lindel\"of topology can be partitioned into two bases while there exists a
consistent example of a first-countable, 0-dimensional, Hausdorff space  of size
$2^\oo$ and weight $\omega_1$ which admits a point countable base without a
partition to two bases. 
\end{abstract} 

\section{Introduction}

At the Trends in Set Theory conference in Warsaw, Barnab\'as Farkas\footnotemark
\footnotetext{personal communication} raised the natural question whether one
can partition any given base for a topological space into two bases; we will
call this property being \emph{base resolvable}. Note that every space with an
isolated is not base resolvable; hence, from now on by \emph{space} we mean a
\emph{dense-in-itself topological space}.  The aim of this paper is to present
two streams of results: in the first part of the article, we will show that
certain natural classes of spaces are base resolvable. In the second part, we
present a method to construct non base resolvable spaces.

The paper is structured as follows: in Section \ref{gen}, we will start with
general observations about bases and we prove that metric spaces and weakly separated spaces are base resolvable. This section also serves as an
introduction to the methods that will be applied in Section \ref{Lindsec} where
we prove one of our main results in Theorem \ref{Lindres}: every $T_3$ (locally)
Lindel\"of space is base resolvable.

In Section \ref{comb}, we investigate base resolvability from a purely
combinatorial viewpoint which leads to further results: every hereditarily
Lindel\"of space (without any separation axioms) is base resolvable and any base
for a $T_1$ topology which is closed under finite unions can be partitioned into
two bases, see Theorem \ref{herLind} and \ref{finunion} respectively.

Next in Theorem \ref{negl}, we prove that every base $\bs$ for a space $X$
(resolvable or not) contains a large \emph{negligible} portion, i.e. there is
$\mc U\in [\bs]^{|\bs|}$ such that $\bs\setminus \mc U$ is still a base for $X$.

The second part of the paper starts with Section \ref{irressec}; here, we
isolate a partition property, denoted by $\mb P \to (I_\omega)^1_2$, of the
partial order $\mb P=(\bs, \supseteq)$ associated to a base $\bs$ which is
closely related to base resolvability. We will construct a partial order $\mb P$
with this property in Theorem \ref{part} and deduce the existence of a $T_0$ non
base resolvable topology (in ZFC) in Corollary \ref{T0irres}.

Next, in Section \ref{conirres} we present a ccc forcing (of size $\oo_1$) which
introduces a first-countable, 0-dimensional, Hausdorff space $X$ of size $2^\oo$
and weight $\omega_1$ such that $X$ is not base resolvable. The main ideas of
the construction already appear in Section \ref{irressec} however the details
here are much more subtle and the proofs are more technical.

The paper finishes with a list of open problems in Section \ref{problems}. We
remark that Section \ref{conirres} was prepared by the second author and the
rest of the paper is the work of the first author.

The first author would like to thank his PhD advisor, William Weiss, the long
hours of useful discussions. Both authors are grateful for the help of all the
people they discussed the problems at hand, especially Allan Dow, Istv\'an 
Juh\'asz, Arnie Miller, Assaf Rinot, Santi Spadaro, Zolt\'an Szentmikl\'ossy and
Zolt\'an Vidny\'anszky. Finally, we thank Barnab\'as Farkas for the excellent
question!

\section{General results} \label{gen}

In this section, we prove some basic results concerning
{partitions of families of sets and }
 partitions of bases;
these proofs will introduce us to the more involved techniques of the upcoming
sections.

\begin{dfn}
{We say that a family of sets $\mc A$  is \textbf{well-founded} iff the poset
$\<\mc A,\supset\>$ is well-founded, i.e. there is no strictly decreasing infinite chain 
$A_0\supsetneq A_1\supsetneq A_2\supsetneq \dots $   in $\mc A$.}

{
$\mc A $  is \textbf{weakly increasing} iff there is a well order
$\prec$ of $\mc A$ such that $A\prec B$ implies that $B\setminus
A\neq\emptyset$. 
}
\end{dfn}

\begin{prop}\label{pr:well_founded}
{Every family of sets $\mc A$ contains a weakly increasing, and so well-founded subfamily 
$\mc B$ with 
\begin{equation}\notag
 \bigcup \mc A=\bigcup \mc B. 
\end{equation}
 } 
\end{prop}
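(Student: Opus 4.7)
The plan is to carry out a greedy transfinite recursion that selects sets from $\mc A$ one at a time, each time choosing one that contributes a previously uncovered point. This ensures both that the final family is weakly increasing in the order of selection and that its union agrees with $\bigcup \mc A$.

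More concretely, I would define $\mc B = \{B_\alpha : \alpha < \kappa\}$ recursively as follows. Having chosen $B_\beta$ for all $\beta < \alpha$, set $U_\alpha = \bigcup_{\beta < \alpha} B_\beta$. If $U_\alpha = \bigcup \mc A$, stop and let $\kappa = \alpha$; otherwise pick any $x \in \bigcup\mc A \setminus U_\alpha$, choose some $A \in \mc A$ with $x \in A$, and set $B_\alpha = A$. Since each selected $B_\alpha$ contains a point outside all previously selected sets, the sets chosen are pairwise distinct, so the recursion terminates at some ordinal $\kappa \leq |\mc A|^{+}$. Define the well-order $\prec$ on $\mc B$ by $B_\alpha \prec B_\beta \iff \alpha < \beta$; for $\alpha < \beta$, the witness point chosen at stage $\beta$ lies in $B_\beta \setminus U_\beta \subseteq B_\beta \setminus B_\alpha$, so $B_\beta \setminus B_\alpha \neq \emptyset$, confirming that $\mc B$ is weakly increasing. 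The equality $\bigcup \mc B = \bigcup \mc A$ is guaranteed by the stopping condition.

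It remains to observe that weakly increasing implies well-founded in general, which is a short argument I would include for completeness: if $A_0 \supsetneq A_1 \supsetneq \dots$ were a strictly descending chain in a weakly increasing family with witnessing well-order $\prec$, then $A_{n+1} \setminus A_n = \emptyset$ for each $n$ rules out $A_n \prec A_{n+1}$, forcing $A_{n+1} \prec A_n$ and producing an infinite $\prec$-descending chain, a contradiction.

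There is not really a hard step here; the whole argument is a standard greedy construction. The only point worth flagging is that the definition of "weakly increasing" requires the existence of \emph{some} well-order witnessing the property rather than every well-order working, so it is important that the natural order coming from the recursion is already the witness, and no reordering is needed.
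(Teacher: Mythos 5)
Your proof is correct, but it takes a different route from the paper's. The paper fixes an arbitrary well-ordering $\prec$ of $\mc A$ once and for all and defines $\mc B$ in one step as the set of all $B\in\mc A$ with $B\setminus A\neq\emptyset$ for every $A\prec B$; the union is preserved because for each point $p$ the $\prec$-least member of $\mc A$ containing $p$ survives into $\mc B$. That argument needs no recursion, no termination count, and no stopping condition. Your greedy transfinite recursion instead selects a set only when it covers a previously uncovered point, which yields a genuinely different (typically much sparser) subfamily: it is an irredundant cover in which each member contributes a point outside the union of all its predecessors, whereas the paper's $\mc B$ keeps every set not contained in any single earlier set and can be almost all of $\mc A$. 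Both approaches are valid and elementary; the paper's buys brevity, while yours buys a stronger conclusion (a weakly increasing subfamily of size at most $|\bigcup\mc A|$) that the paper does not need here. Your verification that the selection order witnesses weak increasingness, and your closing argument that weakly increasing implies well-founded, are both correct.
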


\begin{proof}
Fix an  arbitrary well-ordering $\prec$  of $\mc A$ and let
 \begin{equation}\label{eq:weakly}
  \mc B=\{B\in \mc A: B\setm A\ne \empt \text{ for all $A\prec B$} \}.
 \end{equation}
If $C\prec B$ for $C,B\in \mc B$, then $B\setm C\ne \empt$, so $\prec$ witnesses
that $\mc B$ is weakly increasing.

To  verify $\bigcup \mc A=\bigcup \mc B$ pick an arbitrary $p\in \bigcup \mc A$
and let 
\begin{equation}
 B=\min_{\prec}\{A\in \mc A: p\in A\}.
\end{equation}
Then $p\in B\setm A$ for all $A\prec B$, so $B\in \mc B.$ 
Thus $\bigcup \mc A=\bigcup \mc B$.
\end{proof}

\begin{dfn}A base $\mb B$ for a space $X$ is \textbf{resolvable} iff it can be
decomposed into two bases. A space $X$ is \textbf{base resolvable} if every base
of $X$ is resolvable.
\end{dfn}

Recall that by \emph{space} we will mean a dense-in-itself topological space
throughout the paper.

Partitioning sets with additional structure is a highly investigated theme in
mathematics; let us cite a classical result  of A. H. Stone which is relevant to
our case:

 \begin{thm}[A. H. Stone, \cite{stone}] \label{stone} Every partially ordered
set $(\mb P,\leq)$ without maximal elements can be partitioned into two cofinal
subsets.
\end{thm}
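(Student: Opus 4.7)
The first observation is that the hypothesis ``no maximal elements'' forces every upper set $\mb P_{\geq p}$ to be infinite. Indeed, if $\mb P_{\geq p}$ were finite then as a finite poset it would possess a maximal element $m$; by transitivity anything above $m$ in $\mb P$ lies in $\mb P_{\geq p}$, so $m$ would be maximal in $\mb P$, contradicting the hypothesis. This observation is the sole use of the hypothesis and will drive the rest of the argument.

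The approach is a Zorn's lemma argument on partial partitions. Let $\mc Q$ be the family of pairs $(A,B)$ of disjoint subsets of $\mb P$ satisfying the invariant that, for every $p\in A\cup B$, both $A\cap \mb P_{\geq p}$ and $B\cap \mb P_{\geq p}$ are nonempty; order $\mc Q$ componentwise by inclusion. Chains have upper bounds (coordinatewise unions; disjointness passes to the limit thanks to totality of the chain, and the invariant for any $p$ in the union is inherited from the stage at which $p$ was first placed), so Zorn yields a maximal $(A^*,B^*)\in \mc Q$.

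It suffices to prove $A^*\cup B^*=\mb P$, since then the invariant immediately gives cofinality of both $A^*$ and $B^*$ (with $p$ itself witnessing the side it lies on). Assuming for contradiction that some $p\in \mb P$ is uncolored, I would first deduce that $\mb P_{\geq p}$ is entirely uncolored: if some $b\in B^*\cap \mb P_{\geq p}$ existed then $(A^*\cup\{p\},B^*)$ would lie in $\mc Q$---the only new instance of the invariant, at $p$, is satisfied by $p$ itself in $A^*\cup\{p\}$ and by $b$ in $B^*$, while old instances are preserved---contradicting maximality; symmetrically $\mb P_{\geq p}\cap A^*=\empt$.

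To derive the contradiction, inside the uncolored set $\mb P_{\geq p}$ I would build a strictly increasing $\omega$-chain $p<q_0<q_1<q_2<\cdots$ (possible because each $\mb P_{>q_n}$ is infinite by the opening observation, and these chain-points stay uncolored since they remain in $\mb P_{\geq p}$), and then extend to $A^*\cup\{p\}\cup\{q_{2n}:n\geq 0\}$ and $B^*\cup\{q_{2n+1}:n\geq 0\}$. Every $q_n$ now has $q_{n+1}$ as an opposite-color witness above it, $p$ is witnessed by $q_0$ and $q_1$, and disjointness with the old sets is automatic. This strictly extends $(A^*,B^*)$ inside $\mc Q$, a contradiction. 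The only real subtlety I want to flag is precisely \emph{why} an infinite chain is required: putting only $p,q_0,q_1$ into the partition would leave $q_0$ and $q_1$ themselves without opposite-color witnesses above them, triggering an infinite cascade of further witness demands; the alternating $\omega$-chain dispatches all these demands simultaneously in a single extension step.
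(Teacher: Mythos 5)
The paper offers no proof of this statement at all --- it is quoted as a known theorem of A.~H.~Stone and used as a black box in Proposition~\ref{first} --- so there is nothing internal to compare your argument against; I have therefore checked it on its own terms, and it is correct. The Zorn's lemma scheme on disjoint partial colorings $(A,B)$ with the invariant ``every colored point has points of both colors weakly above it'' works: the invariant passes to unions of chains, it yields cofinality of both pieces once the coloring is total, and your two-stage analysis of a maximal $(A^*,B^*)$ with an uncolored point $p$ is sound. In particular, the deduction that the whole upper cone of $p$ must be uncolored (else $p$ itself could be added to the side opposite its colored witness) is exactly the right use of maximality, and you correctly identify the one genuine subtlety --- that adding finitely many points triggers an unbounded cascade of witness demands --- and dispatch it with the alternately colored strictly increasing $\omega$-chain, each of whose members has infinitely many points of both colors above it inside the chain. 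The only cosmetic quibble is that the hypothesis is really used as ``above every element there is a strictly larger one'' (which is what builds the chain), rather than through the derived fact that upper cones are infinite; this does not affect correctness. Your argument is, as far as I can tell, the standard proof of the two-piece case of Stone's theorem, and it would be a reasonable self-contained substitute for the citation.
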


\begin{prop}\label{first} 
Suppose that $(X,\tau)$ is a topological space and $p\in X$.
\begin{enumerate}[(1)]
\item Every neighborhood base at  $p$ can be partitioned into two neighborhood bases.
\item Every $\pi$-base can be partitioned into two $\pi$-bases.
\item If $\mc B$ is a neighborhood base at  $p$ and
$\mc B=\mc B_0\cup \mc B_1$ then either $\mc B_0$ or
$\mc B_1$ is a neighborhood base at $p$. 
\item If $\mc B$ is a base and $\mc U\subs \mc B $ is well founded then 
$\mc B\setm \mc U$ is a base.
\item Every base can be partitioned into a cover and a base.
\end{enumerate}
\end{prop}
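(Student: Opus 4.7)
For parts (1) and (2), I would appeal directly to A.~H.~Stone's theorem (Theorem \ref{stone}) applied to the poset $(\mc B,\supseteq)$. A subfamily $\mc C \subs \mc B$ is $\supseteq$-cofinal precisely when for every $B\in \mc B$ there is $C\in \mc C$ with $C\subs B$, and this is exactly what is needed for $\mc C$ to remain a neighborhood base at $p$ (respectively a $\pi$-base), given that $\mc B$ already is one. The hypothesis of Stone's theorem --- no maximal element in $(\mc B,\supseteq)$ --- translates to: no $B\in \mc B$ is $\subs$-minimal in $\mc B$; in our dense-in-itself setting this holds because $p$ has no smallest basic neighborhood, and dually no element of a $\pi$-base is minimal among non-empty open sets.

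Part (3) I would handle by a direct contradiction: if neither $\mc B_0$ nor $\mc B_1$ is a neighborhood base at $p$, choose open $U_0,U_1\ni p$ such that no member of $\mc B_i$ lies inside $U_i$, intersect to $U = U_0\cap U_1\ni p$, use that $\mc B$ is a neighborhood base at $p$ to pick $B\in \mc B$ with $B\subs U$, and observe that whichever $\mc B_i$ contains $B$ contradicts the defining property of $U_i$ since $B\subs U\subs U_i$.

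The real content sits in (4), after which (5) will be essentially immediate. For (4), suppose toward contradiction that some open $V$ and $p\in V$ witness that $\mc B\setm \mc U$ fails to be a base, namely
\begin{equation}\notag
 \mc A_{p,V} := \{B\in \mc B : p\in B\subs V\} \subs \mc U.
\end{equation}
This family is non-empty because $\mc B$ is a base and inherits well-foundedness from $\mc U$, so it has a $\supset$-minimal member $B_0$. The key step --- and the main obstacle --- is to produce a basic open set strictly below $B_0$ still containing $p$: using that $X$ is dense-in-itself so that $p$ admits no smallest open neighborhood, there is an open $W$ with $p\in W\subsetneq B_0$, and then the base property of $\mc B$ applied inside $W$ yields $B\in \mc B$ with $p\in B\subs W\subsetneq B_0$. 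Since $B\in \mc A_{p,V}$ and $B\subsetneq B_0$, this contradicts the minimality of $B_0$.

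Finally, for (5) I would feed the base $\mc B$, which is itself a cover of $X$, into Proposition \ref{pr:well_founded} to extract a weakly increasing --- hence well-founded --- subfamily $\mc B_0\subs \mc B$ with $\bigcup \mc B_0 = X$. This $\mc B_0$ is the cover half of the partition, and by (4) the complement $\mc B\setm \mc B_0$ is still a base, yielding the desired decomposition.
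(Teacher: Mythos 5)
Your proposal is correct and follows essentially the same route as the paper: Stone's theorem for (1) and (2), the well-foundedness obstruction for (4), and Proposition \ref{pr:well_founded} combined with (4) for (5). The only cosmetic difference is in (4), where the paper localizes at a point and invokes part (3) while you run a minimal-counterexample argument directly; both hinge on the same underlying fact that a non-isolated point admits no smallest open neighborhood.
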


\begin{proof}
(1) and (2) follow from Theorem \ref{stone}.

Indeed, 
write $\tau_x=\{U\in \tau:x\in U\}$ for $x\in X$ 
and observe that  $\mc B\subs \tau_x$ is a  
neighborhood base at  $x$ iff $\mc B$ is cofinal in $\<\tau_x,\supset\>$.
By Theorem \ref{stone}, every neighborhood base  at $p $ can be partitioned
into two cofinal subsets of $\<\tau_p,\supset\>$, i.e. into two    
neighborhood bases  at $p$.
So (1) holds.

To prove  (2),  observe that $\mc B\subs \tau$ is a  
$\pi$-base  iff $\mc U$ is cofinal in $\<\tau,\supset\>$.
By Theorem \ref{stone}, every $\pi$-base can be partitioned
 into two cofinal subsets, i.e. into two  $\pi$-bases.

\noindent  (3)
If $\mc B_0$ is not a neighborhood base at $p$ then there is 
  an    element  $V\in \tau_p$ which does not contain any element of
$\mc B$. Thus $\mc B\cap \mc P(V)=\mc B_1\cap \mc P(V)$,
so  $\mc B_1$ is a neighborhood base at $p$.

\noindent  (4)
Let  $x\in X$. 
Then $\tau_x \cap \mc B$ is  a  neighborhood base at $x$.
Since  $\tau_x\cap \mc U$ is well-founded, 
$\tau_x\cap \mc U$ is not a  neighborhood base at $x$.
Thus, by (3), $\tau_x \cap (\mc B\setm \mc U)$ is  a  neighborhood base at $x$.

Since $x$ was arbitrary, we proved that $\mc B\setm \mc U$ is a base.

\noindent  (5)
Every base $\mc B$ contains a well-founded cover
$\mc U$ by Proposition \ref{pr:well_founded} while 
$\mc B\setm \mc U$ is still a base of
$X$ by (4).
\end{proof}

A family $\mb B$
of open subset of a space $\<X,\tau\>$ is a base iff every nonempty open set
is the union of some subfamily of $\mb B$.
This fact implies the 
following:

\begin{obs}\label{fillobs}
 Suppose that $(X,\tau)$ is a topological space, $\mb B_i\subs \tau$ for $i<2$ and $\mb B_0$ is a base.
\begin{enumerate}
 \item If for every $U\in \mb B_0$ there is $\mc U\subs \mb B_1$ with $U=\cup \mc U$ then $\mb B_1$ is a base as well.
\item If $X$ is $T_3$ and for every $U,V\in \mb B_0$ with  $\bar U\subs V$ there is $\mc U\subs \mb B_1$ with $\bar U\subs \cup \mc U\subs V$ then $\mb B_1$ is a base as well.
\end{enumerate}

\end{obs}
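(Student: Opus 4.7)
The plan is to verify, in each case, the pointwise criterion for being a base: for every nonempty open $W\subseteq X$ and every $x\in W$, produce some $B\in \mb B_1$ with $x\in B\subseteq W$. Once this is established, $\mb B_1$ is automatically a base.

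For part (1) this is immediate. Given $x\in W$, use that $\mb B_0$ is a base to pick $U\in \mb B_0$ with $x\in U\subseteq W$. The hypothesis then writes $U=\bigcup \mc U$ for some $\mc U\subseteq \mb B_1$, so some $B\in \mc U$ contains $x$ and satisfies $B\subseteq U\subseteq W$. Equivalently, as observed in the sentence preceding the statement, every nonempty open $W$ is a union of elements of $\mb B_0$, each of which is a union of elements of $\mb B_1$, so $W$ is itself a union of elements of $\mb B_1$.

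For part (2), the extra ingredient is $T_3$, which I would invoke twice in order to sandwich $x$ between two elements of $\mb B_0$ in the way demanded by the hypothesis. Concretely, given $x\in W$, regularity together with the fact that $\mb B_0$ is a base produces $V\in \mb B_0$ with $x\in V$ and $\bar V\subseteq W$; applying the same step inside $V$ at $x$ gives $U\in \mb B_0$ with $x\in U$ and $\bar U\subseteq V$. The hypothesis then yields $\mc U\subseteq \mb B_1$ with $\bar U\subseteq \bigcup \mc U\subseteq V$, and since $x\in U\subseteq \bar U$, some $B\in \mc U$ contains $x$ and satisfies $B\subseteq V\subseteq W$. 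The only mildly delicate point --- hardly a genuine obstacle --- is that one must interleave the two applications of $T_3$ with two refinements through $\mb B_0$: a single pass that picks one $U\in \mb B_0$ inside $W$ and then invokes regularity would not a priori place $\bar U$ inside another element of $\mb B_0$, which is what the hypothesis of (2) requires.
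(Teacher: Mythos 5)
Your proposal is correct and follows essentially the same route as the paper, which leaves the Observation unproved beyond noting that a family of open sets is a base iff every nonempty open set is a union of a subfamily; your pointwise verification, including the double pass through $\mb B_0$ and regularity in part (2), is exactly the argument the paper intends the reader to supply.
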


Now we prove our first general result.

\begin{prop} Every space with a $\sigma$-disjoint base is base resolvable; in
particular, every metrizable space is base resolvable.
\end{prop}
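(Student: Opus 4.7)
The plan is to use the $\sigma$-disjoint base $\mc D = \bigcup_{n<\omega} \mc D_n$ (each $\mc D_n$ pairwise disjoint) to peel off, from an arbitrary base $\mc B$ of $X$, two disjoint well-founded subfamilies at each level $n$ that cover $\bigcup \mc D_n$ and refine $\mc D_n$. One such family at each level will go into $\mc B^0$; the other, together with whatever is left over, will go into $\mc B^1$.

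The local step is as follows. For each $D \in \mc D_n$, the open subspace $D$ is still dense-in-itself, and $\mc B(D) := \{B \in \mc B : B \subseteq D\}$ is a base for it. Applying Proposition \ref{first}(5) twice inside $\mc B(D)$ produces disjoint well-founded covers $\mc U_D^0, \mc U_D^1 \subseteq \mc B(D)$ of $D$. Setting $\mc W_n^i := \bigcup\{\mc U_D^i : D \in \mc D_n\}$ for $i<2$ gives two disjoint subfamilies of $\mc B$ that cover $\bigcup \mc D_n$ and whose members each lie inside some $D \in \mc D_n$. Each $\mc W_n^i$ is still well-founded: any strictly descending $\supsetneq$-chain inside it must stay within a single $\mc U_D^i$ by the disjointness of $\mc D_n$, contradicting the well-foundedness of that $\mc U_D^i$.

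I would then carry out this extraction by recursion on $n$, at stage $n$ working inside $\mc B_n := \mc B \setminus \bigcup_{m<n,\, i<2} \mc W_m^i$, which remains a base by iterated application of Proposition \ref{first}(4). Finally set $\mc B^0 := \bigcup_n \mc W_n^0$ and $\mc B^1 := \mc B \setminus \mc B^0$. Given $x \in X$ and an open neighborhood $U \ni x$, pick $n$ and $D \in \mc D_n$ with $x \in D \subseteq U$; the cover property of $\mc W_n^0$ supplies $B \in \mc W_n^0$ with $x \in B$, and the refinement property together with the disjointness of $\mc D_n$ forces $B \subseteq D \subseteq U$. So $\mc B^0$ is a base, and the symmetric argument shows $\bigcup_n \mc W_n^1 \subseteq \mc B^1$ is a base, hence $\mc B^1$ is too. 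The metrizable corollary follows from the Bing-Nagata-Smirnov theorem, which furnishes a $\sigma$-discrete (in particular $\sigma$-disjoint) base.

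The main subtlety is to pull out the $\mc W_n^i$ in a way that is simultaneously well-founded (so that Proposition \ref{first}(4) remains available throughout the recursion) and $\mc D_n$-refining (so that the cover property translates into an honest inclusion $B \subseteq U$). Both properties are bought by performing the extraction locally inside each $D \in \mc D_n$ and then gluing via the disjointness of $\mc D_n$, which rules out new descending chains or overlaps across different $D$'s.
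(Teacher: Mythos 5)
Your proposal is correct and follows essentially the same route as the paper: at each level $n$ you strip two disjoint well-founded families out of the remaining base, one refining and covering each element of the $n$-th disjoint layer, using the disjointness of that layer to glue local well-founded covers into a single well-founded family and Proposition \ref{first}(4) to keep the residue a base. The only cosmetic differences are that you phrase the local extraction as an application of Proposition \ref{first}(5) inside each subspace $D$ (the paper invokes Proposition \ref{pr:well_founded} directly) and verify the base property pointwise rather than via Observation \ref{fillobs}(1).
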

\begin{proof} Fix a space $X$ with a base $\cup\mb \{\mb E_n:n\in\omega\}$ where $\mb E_n$ is
a disjoint family of open sets for each $n\in\omega$; fix an arbitrary base $\bs $ as well which we aim to
partition.

By induction on $n\in\omega$, construct $\bs _{i,n}\subseteq \bs$ for $i<2$ such
that 
\begin{enumerate}[(1)]
\item $\bs_{i,n}$ is well founded for $i<2$, $n\in\oo$,
\item $\bs_{i,n}\cap \bs_{j,m}=\emptyset$ if $i,j<2$, $n,m\in\omega$ and
$(i,n)\neq(j,m)$,
\item for every $V\in \mb E_n$ and $i<2$ there is $\mc U\subseteq \bs_{i,n}$
such that $\cup\mc U= V$. 
\end{enumerate}

Assume that $\{\bs_{i,k}:i<2,k<n\}$ was constructed.
By Proposition \ref{first}(4)  property (1) assures that $\bs\setm
\cup\{\bs_{i,k}:i<2,k<n\}$ is
still a base of $X$. 
Thus,  by 
Proposition \ref{pr:well_founded}, for each $E\in \mb E_n$ we can choose
a well-founded family $\mc U_E\subs \bs\setm
\cup\{\bs_{i,k}:i<2,k<n\}$ such that $E=\bigcup \mc U_E.$
Let $$\bs_{0,n}=\bigcup\{\mc U_E: E\in \mb E_n\}.
$$ 
Since the elements of $\mb E_n$ are pairwise disjoint, 
$\bs_{0,n}$ is well-founded as well.

To obtain 
$\bs_{1,n}$ repeat the construction of $\bs_{0,n}$ 
using  $\bs\setm
(\cup\{\bs_{i,k}:i<2,k<n\}\cup \bs_{0,n})$ instead of $\bs\setm
\cup\{\bs_{i,k}:i<2,k<n\}$.

 Let
$\bs_i=\cup \{\bs _{i,n}:n\in\oo\}$ for $i<2$; property (3) and Observation
\ref{fillobs}(1) implies that $\mb B_i$ is a base for $i<2$.
\end{proof}

Note that every $\sigma$-disjoint base is point countable, on the other hand our example
of an irresolvable base constructed in Section  \ref{conirres} is point
countable.

A somewhat similar technique, which will be used later as well, gives the
following result:

\begin{prop}\label{weakLindres} Suppose that a regular space $X$ satisfies
$L(X)<\kappa=w(X)=\min\{\chi(x,X):x\in X\}$. Then $X$ is base resolvable.
\end{prop}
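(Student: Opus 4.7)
The plan is to partition an arbitrary base $\bs$ for $X$ by a transfinite recursion of length $\kappa$, where at each stage we carve out two disjoint, small, well-founded subfamilies of $\bs$ that cover a prescribed closure inside a prescribed open set. The final families will then be bases via Observation \ref{fillobs}(2).

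First I would record the following key property: if $\mc U\subs \bs$ with $|\mc U|<\kappa$, then $\bs\setm \mc U$ is still a base. Indeed, given any open $W$ and $x\in W$, the family $\{B\in \bs: x\in B\subs W\}$ is easily seen to be a neighborhood base at $x$ (intersect with an arbitrary $W'\ni x$ and use that $\bs$ is a base of $W\cap W'$), hence it has size $\geq \chi(x,X)\geq \kappa$; so removing fewer than $\kappa$ elements leaves some element inside $W$ containing $x$.

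Next I would fix a base $\mc E$ of size $\kappa$ (which exists because $w(X)=\kappa$) and enumerate
\begin{equation}\notag
 \{(U_\alpha,V_\alpha):\alpha<\kappa\}=\{(U,V)\in \mc E\times \mc E:\bar U\subs V\}.
\end{equation}
By recursion on $\alpha<\kappa$ I would choose pairwise disjoint, well-founded families $\bs_{i,\alpha}\subs \bs$ for $i<2$ with $\bar U_\alpha\subs \bigcup \bs_{i,\alpha}\subs V_\alpha$ and $|\bs_{i,\alpha}|\leq L(X)$. At stage $\alpha$, set $\bs^*=\bigcup\{\bs_{i,\beta}:\beta<\alpha,i<2\}$; then $|\bs^*|\leq |\alpha|\cdot L(X)=\max(|\alpha|,L(X))<\kappa$ (this needs no regularity of $\kappa$), so by the observation above $\bs\setm \bs^*$ remains a base. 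Using regularity of $X$, each $x\in \bar U_\alpha$ lies in some $B\in \bs\setm \bs^*$ with $\bar B\subs V_\alpha$; since $\bar U_\alpha$ is closed and $L(X)<\kappa$, a subcover $\mc F$ of size $\leq L(X)$ suffices to cover $\bar U_\alpha$, and by Proposition \ref{pr:well_founded} I can thin $\mc F$ to a weakly increasing (hence well-founded) subfamily $\bs_{0,\alpha}$ with the same union. Constructing $\bs_{1,\alpha}$ is identical, working in $\bs\setm (\bs^*\cup \bs_{0,\alpha})$ which is still a base for the same reason.

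Finally, let $\bs_i=\bigcup_{\alpha<\kappa}\bs_{i,\alpha}$ for $i<2$ and put any leftover elements of $\bs\setm (\bs_0\cup \bs_1)$ into $\bs_0$. By construction, for every pair $U,V\in \mc E$ with $\bar U\subs V$ there is $\mc U\subs \bs_i$ (namely $\bs_{i,\alpha}$ for the appropriate $\alpha$) with $\bar U\subs \bigcup\mc U\subs V$; hence Observation \ref{fillobs}(2) (applied with $\mc E$ in the role of the known base and $\bs_i$ in the role of the family to be verified) shows that both $\bs_0$ and $\bs_1$ are bases. The main delicate point is exactly the cardinal arithmetic $|\bs^*|<\kappa$ at each stage, which is what makes the "small piece removed from a base is still a base" lemma applicable throughout the recursion; the rest is a routine adaptation of the construction used in the $\sigma$-disjoint base case.
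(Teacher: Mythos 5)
Your proof is correct and follows essentially the same approach as the paper: a transfinite recursion of length $\kappa$ in which, at each stage, fewer than $\kappa=\min\{\chi(x,X):x\in X\}$ sets have been used so the remainder of $\bs$ is still a base, the Lindel\"of number bounds the size of each new piece $\bs_{i,\alpha}$ with $\overline{U_\alpha}\subseteq\bigcup\bs_{i,\alpha}\subseteq V_\alpha$, and Observation \ref{fillobs}(2) finishes the argument. The only (harmless) cosmetic differences are that you index the recursion by an auxiliary base $\mc E$ of size $\kappa$ rather than first shrinking $\bs$ to size $w(X)$, you verify the ``removing fewer than $\kappa$ sets leaves a base'' fact directly instead of via Proposition \ref{first}(3), and your well-foundedness thinning is not actually needed.
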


Recall that $L(X)$, the \emph{Lindel\"of number of $X$}, is the minimal cardinality $\kappa$ such that every open cover of $X$ contains a subcover of size $\kappa$. The \emph{weight of $X$}  is $$w(X)=\min \{|\mb B|:\mb B \text{ is a base of }X\}$$ and the \emph{character of a point $x\in X$} is $$\chi(x,X)=\min \{|\mc U|:\mc U \text{ is a neighbourhood base of }x\}.$$

\begin{proof}It is well known that any base contains a base of size $w(X)$; therefore it suffices to show that any base $\mb B$ of size $w(X)$ can be partitioned into two bases. Let us fix an enumeration
$\{(U_\alpha,V_\alpha):\alpha<\kappa\}$ of all pairs of elements $U,V\in \mb B$
such that $\overline{U}\subseteq V$.

By induction on $\alpha<\kappa$ construct pairwise disjoint
families $$\{\mb B_{0,\alpha},\mb
B_{1,\alpha}:{\alpha}<{\kappa}\}\subseteq  \br \mb B;\le L(X);$$ such that
\begin{equation}\label{eq:smallL}
\text{$\overline{U_\alpha}\subseteq \cup \mb B_{i,\alpha} \subseteq V_\alpha$ for every $i<2$.}
 \end{equation}

Since 
the cardinality of the family 
$\mb B_{<\alpha}=\bigcup\{\mb B_{i,\beta}:\beta<\alpha,i<2\}$
is at most $L(X)\cdot |{\alpha}|$ 
and $L(X)\cdot |\alpha|<\min\{\chi(x,X):x\in X\}$,
the family  $\mb B_{<\alpha}$
can not contain a neighborhood base at any point $x\in X$. 

Thus, by Proposition \ref{first}, 
 $\mb B\setminus \mb B_{<\alpha}$
is still a base for $X$ for every $\alpha<\kappa$. 
It follows that the induction
can be carried out as we can select disjoint 
$\mb B_{{\alpha},0}$  and $\mb B_{{\alpha},0}$ from 
$ \br \mb B\setm \mb B_{<{\alpha}};\le L(X);$ 
so that $$\overline{U_\alpha}\subseteq \cup \mb B_{{\alpha},i} \subseteq V_\alpha$$ 
for $i<2$.

Thus the disjoint families $\bs_i=\cup\{\mb
B_{i,\alpha}:\alpha<\kappa\}$ form a base for $X$ by property (\ref{eq:smallL}) above and Observation \ref{fillobs}(2); thus $X$ is base resolvable.
\end{proof}

We end this section by a simple observation; recall that a space $X$ is \emph{weakly separated} if there is a neighborhood assignment $\{U_x:x\in X\}$ (meaning that $U_x$ is a neighbourhood of $x$) so that $x\neq y\in X$ implies that $x\notin U_y$ or $y\notin U_x$. Note that left-or right separated spaces are weakly separated as well as the Sorgenfrey line.

\begin{obs} Every weakly separated space is base resolvable.
\end{obs}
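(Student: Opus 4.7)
The plan is to show that weak separation forces the neighborhood bases at distinct points (cut down appropriately inside the given base) to be disjoint, after which Proposition \ref{first}(1) does all the real work.

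Fix a base $\mb B$ of $X$ and a neighborhood assignment $\{U_x:x\in X\}$ witnessing weak separation. For each $x\in X$ set
\begin{equation}\notag
 \mb B_x = \{B\in \mb B : x\in B \subseteq U_x\}.
\end{equation}
First I would verify that $\mb B_x$ is a neighborhood base at $x$: given any open $V\ni x$, the set $V\cap U_x$ is an open neighborhood of $x$, so since $\mb B$ is a base there is $B\in \mb B$ with $x\in B\subseteq V\cap U_x$, and then $B\in \mb B_x$.

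The key step is the observation that $\mb B_x\cap \mb B_y=\emptyset$ whenever $x\neq y$. Indeed, if $B$ lay in this intersection, we would have $\{x,y\}\subseteq B\subseteq U_x\cap U_y$, so $y\in U_x$ and $x\in U_y$ simultaneously, contradicting the choice of the neighborhood assignment. Hence the family $\{\mb B_x:x\in X\}$ is pairwise disjoint.

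Now by Proposition \ref{first}(1), each $\mb B_x$ can be partitioned into two neighborhood bases $\mb B_x=\mb B_x^0\cup \mb B_x^1$ at $x$. Define
\begin{equation}\notag
 \mb B^0 = \bigcup_{x\in X}\mb B_x^0 \cup \bigl(\mb B\setm \bigcup_{x\in X}\mb B_x\bigr),
 \qquad
 \mb B^1 = \bigcup_{x\in X}\mb B_x^1.
\end{equation}
By pairwise disjointness of the $\mb B_x$'s, this is a partition of $\mb B$. For each $x\in X$ and each open $V\ni x$, the fact that $\mb B_x^i$ is a neighborhood base at $x$ provides a $B\in \mb B_x^i\subseteq \mb B^i$ with $x\in B\subseteq V$; thus both $\mb B^0$ and $\mb B^1$ are bases. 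I do not anticipate any real obstacle here: once the disjointness of the $\mb B_x$'s is in place, everything else reduces to bookkeeping and the already-proved fact that a single neighborhood base splits into two.
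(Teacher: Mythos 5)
Your proof is correct and follows essentially the same route as the paper's: define $\mb B_x=\{B\in\mb B: x\in B\subseteq U_x\}$, use weak separation to get pairwise disjointness, and then split each $\mb B_x$ via Proposition \ref{first}(1). The only (harmless) difference is that you explicitly account for the leftover sets $\mb B\setminus\bigcup_x\mb B_x$, which the paper leaves implicit.
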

\begin{proof} Recall that every neighborhood base at some point $x$ can be partitioned into two
neighbourhood bases by Proposition \ref{first}(1). Thus, if $\mb B$ is a base of $X$
and there is a disjoint family $\{\bs_x:x\in X\}$ of subsets of $\bs$ such that $\bs_x$ is a neighbourhood base at $x$ for any $x\in X$ then by partitioning $\bs_x$ for each $x\in X$ into two
neighbourhood bases of $x$ we get a partition of $\bs $ into two bases of $X$.

Now, let us fix a base $\bs$ we wish to partition and a neighbourhood assignment $\{U_x:x\in X\}$ witnessing that $X$ is weakly separated. Define $$\bs_x=\{U\in \bs:x\in U\subs U_x\}$$ for $x\in X$; clearly, $\bs_x$ is neighbourhood base at $x$. Furthermore, if $x\neq y$ and say $x\notin U_y$ then $U\in \bs_x$ implies $U\notin \bs_y$; that is, $\bs_x\cap \bs_y=\emptyset$ if $x\neq y\in X$ which finishes the proof.
\end{proof}

We thank the referee for pointing out this last observation for us.

\section{Lindel\"of spaces are base resolvable} \label{Lindsec}

Our aim in this section is to prove that $T_3$ Lindel\"of spaces are base
resolvable; we start with a definition and some observations while the most
important part of the work is done in the proof of Lemma \ref{ext}. 

\begin{dfn}  Let $\mc{A},\mc{B}$ families of open sets in a space $X$. We say
that $\mc{A}$ \textbf{weakly fills} $\mc{B}$ iff for every $U,V\in\mc{B}$ such
that $\overline U\subs V$ there is $\mc W\subseteq \mc A$ such that
$$\overline{U}\subseteq \cup\mc W\subs V.$$ $\mc{A},\mc{B}$ is called a
\textbf{weakly good pair} iff $\mc{A},\mc{B}$ are disjoint, $\mc{A}$ weakly
fills $\mc{B}$ and $\mc{B}$ weakly fills $\mc{A}$.
\end{dfn}

We remark that in the next section we introduce stronger notions called
\emph{filling} and \emph{good pairs}. 
The first part of the following observations basically restates 
Observation \ref{fillobs}(2) with our new terminology:

\begin{obs}\label{wgpobs} Suppose that $X$ is a regular space.
\begin{enumerate}[(1)]
\item If $(\mc{A},\mc{B})$ is a weakly good pair in $X$ then $\mc A$ contains a
neighborhood base at $x$ iff $\mc B$ contains a neighborhood base at x, for any
$x\in X$.
\item If $\{\mc A_\alpha:\alpha<\kappa\}$ and $\{\mc B_\alpha:\alpha<\kappa\}$
are increasing chains and $(\mc{A} _\alpha,\mc{B}_\alpha)$ is a weakly good pair in $X$
then $(\cup_{\alpha<\kappa} \mc A_\alpha,\cup_{\alpha<\kappa} \mc B_\alpha)$ is
a weakly good pair as well.
\end{enumerate}
\end{obs}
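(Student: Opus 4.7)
For part (1), the statement is symmetric in $\mc A$ and $\mc B$, so it suffices to show one direction: assume $\mc A$ contains a neighborhood base at $x$ and prove the same for $\mc B$. Given any open neighborhood $V$ of $x$, I will produce a member of $\mc B$ containing $x$ and lying inside $V$. Since $\mc A$ is a neighborhood base at $x$, pick $U\in\mc A$ with $x\in U\subseteq V$. Using regularity together with the fact that $\mc A$ is a neighborhood base at $x$, I can also choose $U'\in \mc A$ with $x\in U'$ and $\overline{U'}\subseteq U$: first apply regularity to find an open $W$ with $x\in W\subseteq \overline W\subseteq U$, then pick $U'\in\mc A$ with $x\in U'\subseteq W$. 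Now $U',U\in\mc A$ and $\overline{U'}\subseteq U$, so by the hypothesis that $\mc B$ weakly fills $\mc A$ there is $\mc W\subseteq \mc B$ with $\overline{U'}\subseteq \cup\mc W\subseteq U$. Since $x\in \overline{U'}$, some $W\in\mc W\subseteq\mc B$ satisfies $x\in W\subseteq U\subseteq V$. As $V$ was arbitrary, $\mc B$ contains a neighborhood base at $x$.

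For part (2), let $\mc A=\bigcup_{\alpha<\kappa}\mc A_\alpha$ and $\mc B=\bigcup_{\alpha<\kappa}\mc B_\alpha$. For disjointness, if $U\in\mc A_\alpha\cap\mc B_\beta$ then setting $\gamma=\max(\alpha,\beta)$ one has $U\in\mc A_\gamma\cap \mc B_\gamma=\emptyset$ since $(\mc A_\gamma,\mc B_\gamma)$ is a weakly good pair and the chains are increasing; contradiction. For the weak filling property, take $U,V\in\mc A$ with $\overline U\subseteq V$. The chain being increasing, there is a single $\alpha$ with $U,V\in\mc A_\alpha$, and then by hypothesis we find $\mc W\subseteq \mc B_\alpha\subseteq \mc B$ with $\overline U\subseteq \cup\mc W\subseteq V$. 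The symmetric argument shows that $\mc A$ weakly fills $\mc B$.

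There is no real obstacle here; the only mild subtlety lies in part (1), where one must invoke regularity to squeeze a ``thickened'' element $U'$ with $\overline{U'}\subseteq U$ inside $\mc A$, so that the weak filling hypothesis — which applies only to pairs with $\overline U\subseteq V$ — becomes available. Part (2) is purely bookkeeping, exploiting that the witnessing pair $U,V$ for any inclusion $\overline U\subseteq V$ must already appear together at some stage of the increasing chain.
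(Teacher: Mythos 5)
Your proof is correct. The paper states this as an Observation and supplies no proof of its own; your argument --- in particular the use of regularity in part (1) to manufacture $U'\in\mc A$ with $x\in U'$ and $\overline{U'}\subseteq U$, so that the weak-filling hypothesis (which only applies to pairs with $\overline{U}\subseteq V$) becomes usable --- is exactly the intended routine verification, and part (2) is the standard chain argument.
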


We say that the weakly good pair $(\mc A', \mc B')$ \textbf{extends} the weakly
good pair $(\mc A, \mc B)$ iff $\mc A \subseteq \mc A'$ and $\mc B\subs \mc B'$.
A family of 
pairs  $\{(\mc A_\xi, \mc B_\xi):\xi<\Theta\}$ is
\textbf{pairwise disjoint} iff $\mc A_\xi\cap \mc B_\zeta=\emptyset$ for each
$\xi,\zeta<\Theta$.

Next, we prove that weakly good pairs can be nicely extended in Lindel\"of
spaces.

\begin{lm}\label{ext} Suppose that $X$ is a $T_3$ Lindel\"of space with a base
$\mb B$. Given a weakly good pair $(\mc{A},\mc{B})$ from elements of 
$\mb B$ and a
single pair of open sets $\{U,V\}$ such that $\overline U\subs V$ there is a
weakly good pair  $(\mc{A}',\mc{B}')$ formed by elements of $ \mb B$ extending $(\mc{A},\mc{B})$
such that both $\mc{A}'$ and $\mc{B}'$ weakly fills $\{U,V\}$.
\end{lm}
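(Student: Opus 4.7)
The plan is to build $(\mc A', \mc B')$ as the union of an increasing $\omega$-chain of pairs $(\mc A_n, \mc B_n)_{n < \omega}$ starting from $(\mc A_0, \mc B_0) := (\mc A, \mc B)$, preserving disjointness $\mc A_n \cap \mc B_n = \emptyset$ at every stage. Stage 1 handles the pair $\{U, V\}$, while each later stage fills the sandwich pairs newly created by the previous stage. Setting $\mc A' := \bigcup_n \mc A_n$ and $\mc B' := \bigcup_n \mc B_n$, the weakly-good-pair property of the union will follow by a direct stage-tracking argument: any sandwich pair in $\mc A'$ (resp.\ $\mc B'$) first appears at some finite stage $n$ and is filled at stage $n+1$.

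At stage 1, I would produce two disjoint countable families $\mc C_A, \mc C_B \subseteq \mb B$ of basic opens with closure in $V$, each covering $\bar U$, and then set $\mc A_1 := \mc A \cup \mc C_A$, $\mc B_1 := \mc B \cup \mc C_B$. For each $x \in \bar U$ I distinguish whether $\mc A$ (equivalently, by Observation \ref{wgpobs}(1), $\mc B$) contains a neighborhood base at $x$: if yes, pick from $\mc A$ and $\mc B$ elements containing $x$ and sitting inside $V$; if no, I use that some open neighborhood of $x$ inside $V$ is disjoint (in the inclusion sense) from every member of $\mc A \cup \mc B$ and select, by regularity, the base property of $\mb B$, the $T_1$ axiom and the dense-in-itself hypothesis, two distinct basic opens $W_x^A, W_x^B \in \mb B \setminus (\mc A \cup \mc B)$ containing $x$ with closures in $V$. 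Lindel\"ofness of the closed set $\bar U$ reduces each of the resulting two pointwise open covers to a countable subcover; any residual overlap between $\mc C_A$ and $\mc C_B$ occurs only among basic opens freshly chosen from $\mb B \setminus (\mc A \cup \mc B)$ and is resolved pointwise by the same local-abundance argument.

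At stage $n+1$, for every sandwich pair $(P, Q) \in \mc A_n^2$ with $\bar P \subseteq Q$ that involves a basic open added at stage $n$, I add to $\mc B_{n+1}$ a subfamily $\mc W \subseteq \mb B$ witnessing $\bar P \subseteq \bigcup \mc W \subseteq Q$; the dual additions go to $\mc A_{n+1}$. For each $x \in \bar P$, either $\mc B_n$ already contains some $W$ with $x \in W \subseteq Q$ (reuse it), or I pick a fresh basic open in $\mb B \setminus \mc A_n$ containing $x$ inside $Q$, by the same dichotomy as in stage 1 applied now to the intermediate pair: if every basic open at $x$ inside $Q$ lay in $\mc A_n \cup \mc B_n$, then by Proposition \ref{first}(3) one of $\mc A_n, \mc B_n$ would contain a neighborhood base of $x$ inside $Q$ and one could trace back (via the weakly-good-pair property of $(\mc A,\mc B)$ and the fills recorded at earlier stages) to a witness already sitting in $\mc B_n$. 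Lindel\"ofness of $\bar P$ turns the resulting pointwise cover into a countable one, and simultaneous processing of the two sides preserves disjointness.

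The main obstacle I anticipate is precisely this disjointness-bookkeeping across all $\omega$ stages: at every ``fresh basic open'' step one needs enough elements of $\mb B$ outside $\mc A_n \cup \mc B_n$ to remain available, and where this local abundance is exhausted one must instead locate a usable element already present in the correct side — this is where the weakly-good-pair hypothesis on the starting pair $(\mc A,\mc B)$, together with $T_3$ and dense-in-itself, really earns its keep. Once disjointness is secured, both $\mc A'$ and $\mc B'$ weakly fill $\{U, V\}$ by the stage-1 construction, and the weakly-good-pair property of $(\mc A',\mc B')$ follows from the stage-tracking argument outlined above.
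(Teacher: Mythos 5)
Your overall architecture (an increasing $\omega$-chain of pairs, each stage filling the sandwich pairs created by the previous one) matches the paper's treatment of the countable case, but two essential mechanisms are missing, and without them the construction does not go through.

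First, the availability of ``fresh'' elements. At every stage you need that $\mb B\setminus(\mc A_n\cup\mc B_n)$ still reaches each relevant point $x$ inside the relevant open set; your fallback, when it does not, is that ``one could trace back \dots to a witness already sitting in $\mc B_n$.'' That trace-back only works if a neighborhood base at $x$ contained in $\mc A_n\cup\mc B_n$ can be pushed into one of the \emph{original} weakly good pairs, where Observation \ref{wgpobs}(1) transfers it to the opposite side. But $\mc A_n\setminus\mc A$ consists of the covers you added at earlier stages, and a countable subcover obtained from Lindel\"ofness can perfectly well contain an infinite strictly decreasing chain, hence a neighborhood base at some point; then $\mc A_n$ contains a neighborhood base at $x$ living entirely in the new part, $\mc B_n$ need not contain any witness there, and neither branch of your dichotomy succeeds. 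The paper closes exactly this hole by refining every cover to a \emph{well-founded} one via Proposition \ref{pr:well_founded}, so that each increment is a countable well-founded family, which by Proposition \ref{first}(3)--(4) can never carry a neighborhood base; that is the content of the Subclaim that $\bs\setminus(\mc B^m\cup\bigcup_{j<k}\mc D_j)$ remains a base. You never impose well-foundedness, so this step is a genuine gap (the same omission is what leaves your stage-1 disjointness claim unproved: after choosing the countable family $\mc C_A$ you need $\mb B\setminus(\mc A\cup\mc B\cup\mc C_A)$ to still reach every point, which again requires $\mc C_A$ not to contain a neighborhood base).

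Second, the uncountable case. If $\mc A$ is uncountable, then adding a single new set $Q$ at stage $n$ creates a sandwich pair $(P,Q)$ for every $P\in\mc A$ with $\overline P\subseteq Q$ --- possibly uncountably many, none of which is filled by $\mc B$ a priori. Your stage $n+1$ must then add an uncountable increment; an uncountable union of countable covers can again contain a neighborhood base at a point, destroying the fresh-element argument at the next stage, and the ``handle all newly created pairs at the next stage'' bookkeeping no longer closes off in $\omega$ steps with controlled increments. This is precisely why the paper proves the stronger statement $\triangle_\kappa$ by induction on $\kappa$, using the $\omega$-case as the base case and a chain of elementary submodels (together with Observation \ref{wgpobs}(2), which lets unions of chains of weakly good pairs remain weakly good) to reduce an uncountable pair to a continuous chain of smaller ones. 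Your proposal has no substitute for this reduction.
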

\begin{proof} We will show this essentially by induction on the size of 
$\mc{A}$ and $\mc{B}$ however we need to prove something significantly stronger
(and more technical) than the statement of the lemma itself. 

 Let $\triangle_\kappa$ stand for the following statement: \underline{for each}
pairwise disjoint family of weakly good pairs  $\{(\mc A_i, \mc B_i), (\mc
C_j,\mc D_j):i<n,j<k\}$, each a subfamily from $\mb B$, such that $|\mc
A_i|,|\mc B_i|\leq \kappa$ and  arbitrary  family of open sets $\mc E$ of size at most
$\kappa$ \underline{there is} a  weakly good pair $(\mc A, \mc B)$ from $\mb B$
of size at most $\kappa $ such that
\begin{enumerate}
\item $\cup_{i<n} \mc A_i\subs \mc A$ and $\cup_{i<n} \mc B_i\subs \mc B$,
\item $\mc A$ and $\mc B$   weakly fill $\mc E$,
\item $\{(\mc A,\mc B), (\mc C_j,\mc D_j):j<k\}$ is still pairwise disjoint.
\end{enumerate}
We prove that $\triangle_\kappa$ holds for every infinite $\kappa$ by induction
on $\kappa$.
\begin{clm}$\triangle_\omega$ holds.
\end{clm}

\begin{proof}Fix  $\{(\mc A_i, \mc B_i), (\mc C_j,\mc D_j):i<n,j<k\}$ and  $\mc
E$ as above. By induction on $m\in\omega$ we build increasing chains $\{\mc A^m:m\in
\oo\}$ and $\{\mc B^m:m\in\oo\}$ from subsets of $\bs$ such that 
\begin{enumerate}[(1)]
\item $\mc A^0=\cup_{i<n} \mc A_i$, $\mc B^0=\cup_{i<n}\mc B_i$,
\item $\mc A^{m+1}\setm \mc A^m$ and $\mc B^{m+1}\setm \mc B^m$ are countable
well-founded families, 
\item
the family of pairs $\{(\mc A^m,\mc B^m), (\mc C_j,\mc D_j):j<k\}$ is  pairwise disjoint
\end{enumerate} 
for each $m\in\oo$.
Furthermore, we want to make sure that $\mc A=\cup_{m\in\oo} \mc A^m$ and $\mc
B=\cup_{m\in\oo} \mc B^m$ form a weakly good pair and they both weakly fill
$\mc E$. Therefore, we partition $\oo$ into infinite sets $\omega=\cup
\{D_m:m\in\omega\}$ and at the $m^{\rm th}$  step 
\begin{enumerate}[(1)]\addtocounter{enumi}{3}
 \item we fix a surjective map $$f_m:D_m\setm
(m+1)\to \{(U,V)\in  (\mc A^m\cup \mc B^m\cup \mc E)^2: \overline U\subs V \};$$
\item 
if $m\in D_\ell\setm (\ell+1)$ and $f_\ell(m)=(U,V)$ then   
both $\mc A^{m+1}$ and $\mc B^{m+1}$  weakly fill $\{U,V\}$.
 \end{enumerate}

In particular, it suffices to construct disjoint $\mc A^{m+1}$ and $\mc B^{m+1}$ from $\mc
A^{m}$ and $\mc B^{m}$ such that they satisfy (2), (3) and (5) above, especially  
they both weakly fill a given $(U,V)$. We construct $\mc A^{m+1}$, the proof for $\mc
B^{m+1}$ is analogous.

\begin{subclaim}$\bs \setm (\mc B^m \cup \bigcup_{j<k} \mc D_j)$
is a base of $X$.
\end{subclaim}

\begin{proof}[Proof of the Subclaim]

Let $x\in X$ be arbitrary.

If $\mc B^m \cup \bigcup_{j<k} \mc D_j$
does not contain a neighborhood base at $x$, then
$\bs \setm (\mc B^m \cup \bigcup_{j<k} \mc D_j)$   
should   contain a neighborhood base at $x$ by Proposition \ref{first}(3).

Assume know that $\mc B^m \cup \bigcup_{j<k} \mc D_j$
contains a neighborhood base at $x$. Since 
\begin{equation}\notag
\mc B^m \cup \bigcup_{j<k} \mc D_j= (\mc B^m\setm \mc B^0)
\cup\bigcup_{i<n}
\mc B_i\cup \bigcup_{j<k}\mc D_j,
\end{equation}
applying Proposition \ref{first}(3) again, one of the sets
\begin{equation}
\mc B^m\setm \mc B^0,\mc B_0,\dots, \mc B_{n-1},\mc D_0,\dots, \mc D_{k-1}
\end{equation}
contains a neighborhood base at $x$.
Since
$\mc B^m\setm \mc B^0 $ is well-founded,
it can not contain a neighborhood base.
If $\mc B_i$ (or $\mc D_j$, respectively)  contains a neighborhood base at $x$, then 
$\mc A_i$ (or $\mc C_j$, respectively)
also contains a neighborhood base at $x$ by Observation \ref{wgpobs}(1).
In both cases,   $\bs \setm (\mc B^m \cup \bigcup_{j<k} \mc D_j)$
contains a neighborhood base, which proves the Subclaim.
\end{proof}

Since $X$ is Lindel\"of, using the Subclaim above and Proposition
\ref{pr:well_founded}  we can find  a countable well-founded cover 
$\mc Q\subs \bs \setm (\mc B^m \cup \bigcup_{j<k} \mc D_j)$  of $\overline U$
with $\cup \mc Q\subs V$. Now define  
 $\mc A^{m+1}=\mc A^m\cup \mc Q$. Since $\mc Q$
and $(\mc B^m \cup \bigcup_{j<k} \mc D_j)$ are disjoint, (3) holds. 
(2) and (5) are clear from the construction.
\end{proof}

\begin{clm}Suppose that $\triangle_\lambda$ holds for every
$\omega\leq\lambda<\kappa$. Then $\triangle_\kappa$ holds.
\end{clm}
\begin{proof} Fix  $\{(\mc A_i, \mc B_i), (\mc C_j,\mc D_j):i<n,j<k\}$ and  $\mc
E$, let $\operatorname{cf}(\kappa)=\mu$ and fix a cofinal sequence of ordinals
$(\kappa_\xi)_{\xi<\mu}$ in $\kappa$. Take a chain of elementary submodels
$(M_\xi)_{\xi<\mu}$ of $H(\theta)$ (where $\theta$ is large enough) such that everything relevant is in $M_0$, $\kappa_\xi\subs
M_\xi$ and $|M_\xi|=|\kappa_\xi|$ for $\xi<\mu$. The following is an easy
consequence of $M_\xi$ being elementary and $X$ being Lindel\"of:

\begin{subclaim}  $(\mc A_i\cap M_\xi, \mc B_i\cap M_\xi)$ is a weakly good pair
and $|\mc A_i\cap M_\xi|,|\mc B_i\cap M_\xi|\leq|\kappa_\xi|$ for all $i<n$.
\end{subclaim}

\begin{proof}[Proof of the Subclaim]
If $U,V\in \mc A_i\cap M_\xi$, $\overline U\subs V$  
then $\mc A_i,\mc B_i,A\in M_\xi $ implies that
\begin{equation}\notag
 M_\xi \vDash \exists \mc B\in \br \mc B_i;{\omega};\quad 
\overline U\subs  \bigcup \mc B\subs V.
\end{equation}
because $X$ is Lindel\"of. So there is $\mc B\in  M_\xi\cap\br \mc B_i;{\omega};$
such that $\overline U\subs  \bigcup \mc B\subs U$. Since $\mc B$ is countable,
$\mc B\in  M_\xi$ implies $\mc B\subs  M_{\xi}$. 
So we have  $\mc B\subs \mc B_i\cap M_{\xi}$ with $\overline U\subs  \bigcup \mc B\subs V
$. This shows that $\mc B_i\cap M_{\xi}$ fills $\mc A_i\cap M_\xi$ and the other direction of the proof is completely analogous.
\end{proof}

By induction on $\xi<\mu$ construct weakly good pairs
$\{(\mc A^\xi, \mc B^\xi): \xi<\mu\}$ so that $\mc A^\xi\subs \mc A^\zeta$, $\mc B^\xi\subs \mc B^\zeta$ for $\xi<\zeta<\mu$ and
\begin{enumerate}[(i)]
\item $\cup_{i<n} (\mc A_i\cap M_\xi)\subs \mc A^\xi\subs \mb B$ and $\cup_{i<n}
(\mc B_i\cap M_\xi)\subs \mc B^\xi\subs \mb B$,
\item $\mc A^\xi$ and $\mc B^\xi$ has size $\leq |\kappa_\xi|$,
\item $\mc A^\xi$ and $\mc B^\xi$ weakly fills $\mc E\cap M_\xi$,
\item $\mc A^\xi\cap \mc B_i=\emptyset, \mc A^\xi\cap \mc D_j=\emptyset$ and
$\mc B^\xi\cap \mc A_i=\emptyset, \mc B^\xi\cap \mc C_j=\emptyset$.
\end{enumerate}
This can be done using $\triangle_{|\kappa_\xi|}$ at stage $\xi$. First note
that $\mc A^{<\xi}=\cup \{\mc A^\zeta:\zeta<\xi\}$ and $\mc B^{<\xi}=\cup \{\mc
B^\zeta:\zeta<\xi\}$ are of size at most $|\kappa_\xi|$ and  $(\mc A^{<\xi},\mc
B^{<\xi})$ is a weakly good pair. Also, the family $$\{(\mc A^{<\xi},\mc
B^{<\xi}),(\mc A_i\cap M_\xi, \mc B_i\cap M_\xi);(\mc A_i, \mc B_i), (\mc
C_j,\mc D_j):i<n,j<k\}$$ is pairwise disjoint. Hence $\triangle_{|\kappa_\xi|}$
implies that there is a weakly good pair $(\mc A^\xi, \mc B^\xi)$ from $\mb B$
of size at most $|\kappa_\xi|$ which fills $\mc E\cap M_\xi$ and is pairwise
disjoint from $\{(\mc A_i,\mc B_i),(\mc C_j,\mc D_j):i<n,j<k\}$ while $$\mc
A^{<\xi}\cup \bigcup_{i<n} (\mc A_i\cap M_\xi)\subs \mc A^\xi$$
and
$$\mc B^{<\xi}\cup \bigcup_{i<n} (\mc B_i\cap M_\xi)\subs \mc B^\xi.$$
Note that $\triangle_{|\kappa_\xi|}$ was used to find the common extension of
$n+1$ weakly good pairs such that this extension is disjoint from $n+k$ given
weakly good pairs.
 Now define $\mc A=\cup\{\mc A^\xi:\xi<\zeta\}$ and $\mc B=\cup\{\mc
B^\xi:\xi<\zeta\}$; $(\mc A,\mc B)$ is the desired extension.
\end{proof}
This finishes the proof the lemma.

\end{proof}

Recall that a space is  \emph{locally Lindel\"of} if every point has a neighbourhood with Lindel\"of closure.

\begin{prop}
 Suppose that $X$ is a $T_3$ locally Lindel\"of space. Then $X$ embeds into a $T_3$ Lindel\"of space $X^*$ with $|X^*\setm X|= 1$.
\end{prop}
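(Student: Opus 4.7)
The plan is to construct $X^*$ as the one-point Lindel\"ofication of $X$: take $X^*=X\cup\{\infty\}$ for a new point $\infty\notin X$, and declare $U\subseteq X^*$ to be open iff either $\infty\notin U$ and $U$ is open in $X$, or $\infty\in U$ and $X\setm U$ is a closed Lindel\"of subset of $X$. That this yields a topology extending $\tau_X$ and making $X$ an open subspace is routine; the crucial point is that closed Lindel\"of subsets of $X$ form an ideal, so the family of basic neighborhoods of $\infty$ is closed under finite intersections. Lindel\"ofness of $X^*$ is immediate: any open cover must contain a set of the form $\{\infty\}\cup(X\setm F)$, and $F$ being closed Lindel\"of admits a countable subcover from the remaining members of the cover.

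For $T_2$ I would pair $x\in X$ with $\infty$ using local Lindel\"ofness to find an open $W\ni x$ with $\overline{W}$ Lindel\"of; then $W$ and $\{\infty\}\cup(X\setm\overline{W})$ are disjoint open neighborhoods. Regularity at points of $X$ follows from $T_3$ of $X$ together with local Lindel\"ofness, by shrinking within a neighborhood whose closure is Lindel\"of so that the $X$-closure of the shrunken set coincides with its $X^*$-closure.

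The heart of the matter is regularity at $\infty$, which I would reduce to the following key assertion: every closed Lindel\"of $F\subseteq X$ admits a closed Lindel\"of $F'$ with $F\subseteq\operatorname{int}^X(F')$. Granting this, the neighborhood $\{\infty\}\cup(X\setm F')$ of $\infty$ has closure in $X^*$ contained in $\{\infty\}\cup(X\setm F)$, which yields $T_3$. To produce $F'$, for each $x\in F$ I would use local Lindel\"ofness to pick open $W_x\ni x$ with $\overline{W_x}$ Lindel\"of and, by regularity, an open $V_x$ with $x\in V_x\subseteq\overline{V_x}\subseteq W_x$; Lindel\"ofness of $F$ then yields a countable subcover $\{V_{x_n}\}_{n<\omega}$ of $F$. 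The natural candidate is $F'=\bigcup_n\overline{V_{x_n}}$, which is Lindel\"of as a countable union of Lindel\"of sets and contains $F$ in its interior since $\bigcup_n V_{x_n}$ is open and sits inside $F'$.

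The main obstacle is to ensure $F'$ is closed in $X$: a countable union of closed Lindel\"of sets need not be closed, so in principle $\overline{F'}\setm F'$ may be non-empty and even non-Lindel\"of. I expect this to require exploiting $T_3$ more carefully, either by refining the choice of the $V_{x_n}$'s (inserting further shrinkings $\overline{V_{x_n}}\subseteq U_n\subseteq\overline{U_n}\subseteq W_{x_n}$ afforded by regularity) so that any accumulation point of the union of closures already belongs to some $\overline{V_{x_n}}$, or by iterating the cover-and-close construction, absorbing new limit points stage-by-stage, and showing the process terminates in a closed Lindel\"of $F'$. Once this closedness issue is resolved, the verification of $T_3$ at $\infty$ and hence the proposition follows.
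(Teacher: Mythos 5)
Your construction differs from the paper's in a way that turns out to be fatal. You take the neighborhoods of $\infty$ to be the complements of \emph{all} closed Lindel\"of subsets of $X$, and you correctly isolate the point everything hinges on: regularity at $\infty$ is equivalent to the assertion that every closed Lindel\"of $F\subseteq X$ is contained in the interior of a closed Lindel\"of set (equivalently, in an open set with Lindel\"of closure). You do not prove this, and in fact it is false for $T_3$ locally Lindel\"of spaces, so the gap is not merely a missing verification --- the space you build need not be regular. Take the deleted Tychonoff plank $T=\bigl((\omega_1+1)\times(\omega+1)\bigr)\setm\{(\omega_1,\omega)\}$, which is locally compact Hausdorff, hence $T_3$ and locally Lindel\"of. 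The set $F=\{\omega_1\}\times\omega$ is closed in $T$ and countable, hence Lindel\"of. But any open $G\supseteq F$ contains, for each $n$, a set $(\alpha_n,\omega_1]\times\{n\}$; putting $\alpha=\sup_n\alpha_n<\omega_1$ we get $(\alpha,\omega_1]\times\omega\subseteq G$, so $\overline G$ contains $(\alpha,\omega_1)\times\{\omega\}$, a closed copy of an uncountable ordinal, which is not Lindel\"of. Hence $\overline G$ is never Lindel\"of, your key assertion fails, and your $X^*$ is not $T_3$ here. (The plank has isolated points, contrary to the paper's standing convention, but multiplying by $\mathbb{Q}$ produces a dense-in-itself example with the same defect.) This also shows that your fallback ideas --- refining the shrinkings or iterating the cover-and-close process --- cannot succeed in general.

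The paper sidesteps the problem by giving $x^*$ a \emph{coarser} neighborhood filter: only sets of the form $\{x^*\}\cup X\setm\overline U$ where $U$ is open and there is an open $V$ with $\overline U\subseteq V$ and $\overline V$ Lindel\"of. Lindel\"ofness and Hausdorffness of $X^*$ still go through ($\overline U$ is a closed subset of the Lindel\"of set $\overline V$, and local Lindel\"ofness plus regularity of $X$ separates any $x\in X$ from $x^*$). The payoff is at the step you got stuck on: given such a pair $\overline U\subseteq V$, the space $\overline V$ is regular Lindel\"of, hence normal, so one can interpolate an open $W$ with $\overline U\subseteq W\subseteq\overline W\subseteq V$; then $\{x^*\}\cup X\setm\overline W$ is again a legal neighborhood of $x^*$ whose closure misses $\overline U$. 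Because one only ever removes closures of open sets that sit well inside a Lindel\"of closure, the set being removed is automatically closed and the "fattening" is handled by normality rather than by a countable union. If you restrict your neighborhood base in this way, your argument closes up.
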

\begin{proof}
 Construct $X^*$ on the set $X\cup\{x^*\}$ where neighborhoods of the point
$x^*$ are of the form $\{x^*\}\cup X\setm \overline U$ with $U\subs X$ open such that
there is an open $V\subs X$ with $\overline U\subs V$ and $\overline V$ is Lindel\"of. It
is clear that $X^*$ is Hausdorff and Lindel\"of.

Note that if $U,V$ are open in $X$, $\overline U\subs V$ and 
$\overline V$ is Lindel\"of, then  $\overline V$ is normal as well, so  there is an open
$W\subs V$ so that $\overline U \subs W\subs \overline W \subs V$. 
So $X^*$ is regular at the point $x^*$, so  
 $X^*$ is regular.
\end{proof}

\begin{cor} \label{Lindres} Every $T_3$ locally Lindel\"of space is base
resolvable. In particular, every $T_3$ locally countable or locally compact
space is base resolvable. 
\end{cor}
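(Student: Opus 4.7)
The plan is to apply Lemma~\ref{ext} iteratively to establish that every $T_3$ Lindel\"of space is base resolvable, and then reduce the locally Lindel\"of case to the Lindel\"of case via the one-point extension $X^*=X\cup\{x^*\}$ constructed in the preceding Proposition. The ``in particular'' clause is then immediate: in any $T_3$ locally countable or locally compact space, any point $x$ has a countable (resp.\ compact) neighborhood $N$, so by regularity there is an open $U$ with $x\in U\subs\overline U\subs N$, and $\overline U$ is then countable (resp.\ compact), hence Lindel\"of.

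For the Lindel\"of case itself, fix a base $\mb B$ of $X$ and enumerate $\{(U_\alpha,V_\alpha):\alpha<\kappa\}=\{(U,V)\in\mb B\times\mb B:\overline{U}\subs V\}$. Starting from the empty weakly good pair, I would transfinitely build an increasing chain $\{(\mc A^\alpha,\mc B^\alpha):\alpha\le\kappa\}$ of weakly good pairs of subfamilies of $\mb B$ such that at stage $\alpha$ both $\mc A^{\alpha+1}$ and $\mc B^{\alpha+1}$ weakly fill $\{U_\alpha,V_\alpha\}$: successor steps come directly from Lemma~\ref{ext}, and limit steps from Observation~\ref{wgpobs}(2). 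The pair $(\mc A,\mc B)=(\mc A^\kappa,\mc B^\kappa)$ then consists of disjoint subfamilies of $\mb B$ whose two sides both weakly fill every pair $U,V\in\mb B$ with $\overline U\subs V$, so Observation~\ref{fillobs}(2) gives that $\mc A$ and $\mc B$ are each bases of $X$. Distributing the leftover elements of $\mb B\setm(\mc A\cup\mc B)$ arbitrarily between the two sides preserves the base property and produces the desired partition of $\mb B$.

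For the locally Lindel\"of case, fix a base $\mb B$ of $X$, embed $X\hookrightarrow X^*$ as above, let $\mc N$ denote the collection of basic neighborhoods of $x^*$ described in the preceding Proposition, and set $\mb B^*=\mb B\cup\mc N$; this is a base of the $T_3$ Lindel\"of space $X^*$. By the Lindel\"of case just proved, partition $\mb B^*=\mc A^*\sqcup\mc B^*$ into two bases of $X^*$, and define $\mc A=\mc A^*\cap\mb B$ and $\mc B=\mc B^*\cap\mb B$; since $\mb B\subs\mb B^*$ this is automatically a partition of $\mb B$.

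The main obstacle is to verify that $\mc A$ is actually a base of $X$ (the argument for $\mc B$ is identical). Given $x\in X$ and an open $V\subs X$ containing $x$, use $\mc A^*$ being a base of $X^*$ to pick $W_0\in\mc A^*$ with $x\in W_0\subs V$; if $W_0\in\mb B$ then $W_0\in\mc A$ and we are done. Otherwise $W_0\in\mc N$, say $W_0=\{x^*\}\cup(X\setm\overline{U'})$ for some open $U'\subs X$, and $x\in X\setm\overline{U'}$. The key trick is to reapply the base property of $\mc A^*$ at $x$, this time to the smaller open target $X\setm\overline{U'}$, which does not contain $x^*$: any $W_1\in\mc A^*$ with $x\in W_1\subs X\setm\overline{U'}$ must then avoid $x^*$, so $W_1\notin\mc N$ and hence $W_1\in\mc A^*\cap\mb B=\mc A$ with $W_1\subs V$, as required.
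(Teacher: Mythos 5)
Your proposal is correct and follows essentially the same route as the paper: the Lindel\"of case is handled by iterating Lemma~\ref{ext} to produce a weakly good pair that weakly fills every pair $\overline U\subs V$ (the paper packages the same iteration as a Zorn's lemma argument for a maximal weakly good pair, using Observation~\ref{wgpobs}(2) exactly as you do at limits), and the locally Lindel\"of case is reduced to it via the one-point Lindel\"ofization with the trace partition $\mb B^*_i\cap\mb B$. Your final verification is slightly more than needed --- since $V\subs X$ is open in $X^*$ and every basic neighborhood of $x^*$ contains $x^*$, the set $W_0$ automatically lies in $\mb B$, so the ``otherwise'' branch never occurs --- but this costs nothing.
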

\begin{proof} Fix a base $\bs$ for a $T_3$ Lindel\"of space $X$  and consider
the set $\mb P$ of all weakly good pairs $(\mc A,\mc B)$ from $\bs$ partially
ordered by extension. Note that we can apply Zorn's lemma to $\mb P$ by
Observation \ref{wgpobs} part (2); pick a maximal weakly good pair $(\mc A, \mc B)\in \mb P$. Lemma
\ref{ext} implies that a maximal weakly good pair must weakly fill every pair of open sets $\{U,V\}$ with
$\overline U \subs V$, hence both $\mc A$ and $\mc B$ are bases of $X$. 

Given a $T_3$ locally Lindel\"of space $X$ with a base $\bs$ consider it's
one-point Lindel\"ofization $X^*=X\cup \{x^*\}$ with the base $$\bs ^*=\bs \cup
\{U\subseteq X^*:U \text{ is an open neighbourhood of  } x^* \text{ in } X^* \}.$$ $X^*$ is $T_3$
Lindel\"of hence base resolvable; thus $\bs^*$ can be partitioned into two bases,
$\mb B^*_0$ and $\mb B^*_1$,  
which clearly gives a partition of $\bs$: 
$\mb B^*_0\cap \mb B$ and $\mb B^*_1\cap \mb B$.
\end{proof}

\section{Combinatorics of resolvability} \label{comb}

In this section, we will prove a combinatorial lemma which will be our next tool
in showing that further classes of space are base resolvable.

\begin{dfn} Let $\mc{A},\mc{B}\subseteq \mc P(X)$. We say that $\mc{A}$
\textbf{fills} $\mc{B}$ iff $$U=\cup\{V\in \mc{A}:V\subsetneq U\}$$ for every
$U\in\mc{B}$. $\mc{A},\mc{B}$ is called a \textbf{good pair} iff $\mc{A},\mc{B}$
are disjoint, $\mc{A}$ fills $\mc{B}$ and $\mc{B}$ fills $\mc{A}$. $\mc A$ is
\textbf{self-filling} if $\mc A$ fills $\mc A$.
\end{dfn}

Note that 
if $\mc A\subseteq \mc P(X)$
fills $\{\cap \mc B:\mc B\in[\mc A]^{<\omega}\}$ and $\mc A$ covers $X$
then  $\mc A$ is a base for a topology 
on $X$. 

\begin{dfn}A self-filling family $\mc A$ is \textbf{resolvable} iff there is a
partition $\mc A_0,\mc A_1$ of $\mathcal A$ such that $\mc A_i$ fills $\mc A$ for $i<2$.
\end{dfn}

The importance of the following lemma is that it shows that resolvability is a \emph{local} property:

\begin{thm}\label{gp} Suppose that $\mb{B}\subseteq \mc P(X)$ 
is self-filling. Then the
following are equivalent:
 \begin{enumerate}[(1)]
	\item for every $U\in \mb{B}$ there is a good pair
$(\mb{B}_0^U,\bs_1^U)$ from $ \bs$ such that $$U=\cup\bs^U_0=\cup\bs^U_1,$$
	\item $\bs$ is resolvable.
\end{enumerate}
\end{thm}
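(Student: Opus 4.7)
The direction $(2)\Rightarrow(1)$ is immediate: given a resolving partition $\bs=\bs_0\cup\bs_1$, I would put $\bs_i^U=\{V\in\bs_i:V\subsetneq U\}$ for each $U\in\bs$ and $i<2$. The hypothesis that $\bs_i$ fills $\bs$ gives $U=\cup\bs_i^U$, and the same filling condition applied to any $W\in\bs_{1-i}^U$ shows $\bs_i^U$ fills $W$ (since every set witnessing $W=\cup\{V\in\bs_i:V\subsetneq W\}$ is automatically a member of $\bs_i^U$ as $V\subsetneq W\subsetneq U$); so $(\bs_0^U,\bs_1^U)$ is the required good pair.

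For the substantive direction $(1)\Rightarrow(2)$, the plan is to apply Zorn's lemma to the poset $\mb{P}$ of good pairs $(\mc{A},\mc{B})$ with $\mc{A},\mc{B}\subseteq\bs$, ordered by coordinate-wise inclusion. The poset is nonempty, and a coordinate-wise union of a chain is again a good pair by an argument analogous to the one used for weakly good pairs in Observation~\ref{wgpobs}(2). Let $(\mc{A}^*,\mc{B}^*)$ be a maximal element; I would then argue $\mc{A}^*\cup\mc{B}^*=\bs$, which directly yields the desired partition of $\bs$ into two filling subfamilies.

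To prove $\mc{A}^*\cup\mc{B}^*=\bs$, suppose for contradiction that $W\in\bs\setm(\mc{A}^*\cup\mc{B}^*)$. Applying hypothesis (1) to $W$ gives a local good pair $(\bs_0^W,\bs_1^W)$ from $\bs$ with $W=\cup\bs_0^W=\cup\bs_1^W$, and a short preprocessing step lets me assume that $W\notin\bs_0^W\cup\bs_1^W$ (if $W$ sits on some side, remove it; the covering and filling conditions survive because the proper fillers of $W$ lie strictly below it). The key step is then the asymmetric extension $\mc{A}'=\mc{A}^*\cup\{W\}\cup(\bs_0^W\setm\mc{B}^*)$ and $\mc{B}'=\mc{B}^*\cup(\bs_1^W\setm\mc{A}^*)$; disjointness of $\mc{A}'$ and $\mc{B}'$ is a routine check using the disjointnesses of $(\mc{A}^*,\mc{B}^*)$ and of $(\bs_0^W,\bs_1^W)$ together with the set-minus operations and $W\notin\bs_0^W\cup\bs_1^W$.

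The main obstacle, and the real reason for the asymmetric formula above, is verifying that $(\mc{A}',\mc{B}')$ is again a good pair. The old filling relations inside $(\mc{A}^*,\mc{B}^*)$ obviously persist. The new cases are $W$ itself and the genuinely new elements $V\in(\bs_0^W\cup\bs_1^W)\setm(\mc{A}^*\cup\mc{B}^*)$; for each of these, the local good pair writes $V$ as a union of elements of the opposite $\bs_j^W$ strictly below $V$. Those witnesses that already avoid $\mc{A}^*$ (resp.\ $\mc{B}^*$) land in $\mc{B}'$ (resp.\ $\mc{A}'$) directly, while those that happen to lie inside $\mc{A}^*$ (resp.\ $\mc{B}^*$) are themselves filled by sets from $\mc{B}^*\subseteq\mc{B}'$ (resp.\ $\mc{A}^*\subseteq\mc{A}'$) strictly below them. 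Either way, every point of $V$ is reached by an element of $\mc{B}'$ (resp.\ $\mc{A}'$) properly inside $V$, so the filling holds. This yields a strict extension of $(\mc{A}^*,\mc{B}^*)$ in $\mb{P}$, contradicting maximality, and finishes the proof.
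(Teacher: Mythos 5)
Your proof is correct and follows essentially the same route as the paper: Zorn's lemma on the poset of good pairs ordered coordinatewise, together with the asymmetric merge $\mc A^*\cup(\bs_0^W\setm\mc B^*)$, $\mc B^*\cup(\bs_1^W\setm\mc A^*)$, which is exactly the construction of Lemma \ref{lm:extend_fill}(2). The only difference is cosmetic: the paper cashes in maximality by showing the maximal pair already fills every $U\in\bs$ (so it need not cover $\bs$), whereas you show the maximal pair covers $\bs$ and must then compose the two filling relations ($\mc A^*$ fills $\mc B^*$, which fills $\mc A^*$) to conclude that each class also fills its own members --- a one-line step that your phrase \emph{directly yields} glosses over but which is needed, since a good pair partitioning $\bs$ does not by definition say that $\mc A^*$ fills $\mc A^*$.
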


\begin{proof} (2) implies (1) is trivial.

To see  that (1) implies (2),  let $\mc P$ be the set of all good pairs $(\mb B_0, \mb B_1)$ formed by elements
of $\mb B$; $\mc P$ is partially ordered by $(\mb B_0, \mb B_1)\leq(\mb B'_0,
\mb B'_1)$ iff $\mb B_i \subseteq \mb B'_i$ for $i<2$. It is clear that every
chain in $(\mc P, \leq)$ has an upper bound hence, by Zorn's lemma, we can pick
a $\leq$-maximal element $(\mb B_0, \mb B_1)\in \mc P$. 

We claim that $\mb B_i$ fills $\mb B$ for $i<2$. Pick any $U\in \mb B$ and
consider the good pair  $\mb{B}_0^U,\bs_1^U$ with $U=\cup\bs^U_0=\cup\bs^U_1$.
Define $$\mb B'_i=\mb B_i \cup \mb (\mb B^U_i\setminus \mb B_{1-i})$$ for $i<2$.

The second statement of the following lemma yields immediately that  
 $(\mb B'_0, \mb B'_1)$ forms a good pair which fills
$\{U\}$.

\begin{lm}\label{lm:extend_fill}
(1)If a family of sets $\mc A$ fills a family  of sets  $\mc B$ and $\mc A'$ fills 
$\mc B'$ then   $\mc A\cup (\mc A'\setm \mc B)$ fills $\mc B'$.

\noindent (2)
If $(\mc A,\mc B)$ and $(\mc A',\mc B')$ are good pairs then  
$(\mc A\cup (\mc A'\setm \mc B),\mc B\cup (\mc B'\setm \mc A))$ is also a good pair
which fills $\cup\mc B'$.
\end{lm}

\begin{proof}[Proof of the Lemma]
(1) Pick  $U\in \mc B'$.
Since $\mc A'$ fills 
$\mc B'$, there is 
$\mc A^+\subs \mc A'\setm \{U\}$ with $U=\cup \mc A^+$.
For each $B\in \mc A^+\cap \mc B$ choose $\mc A_B\subs \mc A$
with $B=\cup\mc A_B$. Finally let 
\begin{equation}\notag
\mc A^*=(\mc A^+\setm \mc B)\cup \bigcup\{\mc A_B:B\in \mc A^+\cap \mc B\}. 
\end{equation}
Then $\mc A^*\subs \mc A\cup (\mc A'\setm \mc B)\setm\{U\}$ and 
\begin{multline}\notag
 \cup A^*=\bigcup \left((\mc A^+\setm \mc B)\cup \bigcup\{\mc A_B:B\in \mc A^+\cap \mc B\}
\right)=\\
\bigcup \left((\mc A^+\setm \mc B)\cup \{B:B\in \mc A^+\cap \mc B\}
\right)=\bigcup\mc A^+=U.
\end{multline}

\noindent (2)
The families $\mc A\cup (\mc A'\setm \mc B)$ and $\mc B\cup (\mc B'\setm A)$
are clearly disjoint, $\mc A\cup (\mc A'\setm \mc B)$ fills 
$\mc B\cup (\mc B'\setm A)\cup\{\bigcup \mc A\}$ and 
$\mc B\cup (\mc B'\setm A)$ fills $\mc A\cup (\mc A'\setm \mc B)\cup\{\bigcup \mc B\}$ by (1)
which was to be proved.
\end{proof}

Also, $(\mb B_0, \mb B_1)\leq(\mb B'_0, \mb B'_1)$ thus by the
maximality of  $(\mb B_0, \mb B_1)$ we have that $\mb B'_i=\mb B_i$. This
finishes the proof.
\end{proof}

The first corollary is a direct application and shows that resolvability is
preserved by unions.

\begin{cor}Suppose that $\mb B_\alpha$ is a resolvable self-filling family for
each $\alpha<\kappa$. Then $\cup\{\mb B_\alpha:\alpha<\kappa\}$ is a resolvable
self-filling family as well.
\end{cor}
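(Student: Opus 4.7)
The plan is to reduce to Theorem \ref{gp}, which says that a self-filling family is resolvable iff for every member $U$ we can produce a good pair from the family whose union is $U$. So the argument splits into two quick parts: checking self-filling, and producing the good pair locally.

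First, I would verify that $\mb B=\bigcup\{\mb B_\alpha:\alpha<\kappa\}$ is self-filling. Pick any $U\in\mb B$; it lies in some $\mb B_\alpha$, and since $\mb B_\alpha$ is self-filling there is a subfamily of $\mb B_\alpha\setm\{U\}\subs\mb B\setm\{U\}$ whose union is $U$.

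For resolvability, by Theorem \ref{gp} it suffices, given $U\in\mb B$, to produce a good pair $(\mb B_0^U,\mb B_1^U)$ from $\mb B$ with $U=\cup\mb B_0^U=\cup\mb B_1^U$. Pick $\alpha$ with $U\in\mb B_\alpha$ and a witnessing partition $\mb B_\alpha=\mb B_{\alpha,0}\sqcup\mb B_{\alpha,1}$ of the resolvable family $\mb B_\alpha$, so that each $\mb B_{\alpha,i}$ fills $\mb B_\alpha$. Then set
\[
\mb B_i^U=\{V\in\mb B_{\alpha,i}:V\subsetneq U\}\qquad(i<2).
\]

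It remains to check that this is a good pair with the right unions. Disjointness is inherited from the partition of $\mb B_\alpha$. Since $\mb B_{\alpha,i}$ fills $\mb B_\alpha$ and $U\in\mb B_\alpha$, we get $U=\bigcup\mb B_i^U$. Finally, for $V\in\mb B_1^U$ we have $V\in\mb B_\alpha$, hence $V=\bigcup\{W\in\mb B_{\alpha,0}:W\subsetneq V\}$; every such $W$ automatically satisfies $W\subsetneq V\subsetneq U$, so $W\in\mb B_0^U$, showing $\mb B_0^U$ fills $\mb B_1^U$. The symmetric argument shows $\mb B_1^U$ fills $\mb B_0^U$, so $(\mb B_0^U,\mb B_1^U)$ is the required good pair and Theorem \ref{gp} gives resolvability.

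There is really no serious obstacle: the only thing one has to be slightly careful about is that $\mb B_i^U$ is defined as the restriction to proper subsets of $U$, which is precisely what is needed so that the filling properties of $\mb B_{\alpha,i}$ in $\mb B_\alpha$ transfer down to the pair $(\mb B_0^U,\mb B_1^U)$. No amalgamation across different indices $\alpha$ is required — the whole argument takes place inside a single $\mb B_\alpha$.
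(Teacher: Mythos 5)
Your proof is correct and is exactly the ``direct application'' of Theorem \ref{gp} that the paper has in mind (the paper omits the details); your construction of $(\mb B_0^U,\mb B_1^U)$ inside a single $\mb B_\alpha$ is precisely the $(2)\Rightarrow(1)$ direction of Theorem \ref{gp} applied to $\mb B_\alpha$. Nothing further is needed.
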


\begin{cor} Suppose that a self-filling family $\bs$ has the property that 
\begin{enumerate}
 \item[$(\dag)$] for
every $U\in\bs$ there is $\mc U\in [\mb B\setm\{U\}]^{\leq\oo}$ such that
$U=\cup \hspace{0.03cm }\mc U$.
\end{enumerate}
  Then $\bs$ is resolvable.
\end{cor}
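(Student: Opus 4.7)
By Theorem~\ref{gp} it is enough to show that for every $U\in\bs$ there is a good pair $(\bs_0^U,\bs_1^U)\subseteq\bs$ with $\cup\bs_0^U=\cup\bs_1^U=U$; fix such a $U$. The first technical step is a \emph{finite avoidance lemma}: for every $V\in\bs$ and every finite $F\subseteq\bs$ with $V\notin F$ some countable $\mc D\subseteq\bs\setm(F\cup\{V\})$ satisfies $\cup\mc D=V$. This follows by induction on $|F|$ from $(\dag)$: in the inductive step one replaces any forbidden member $W$ of a cover provided by the inductive hypothesis (applied to $F\setm\{W\}$) by its own $(\dag)$-decomposition, chosen via the inductive hypothesis to avoid $F\setm\{W\}$ as well.

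Next I would iterate $(\dag)$ along the countable tree $T=\omega^{<\omega}$. Enumerate $T$ as $s_0=\emptyset,s_1,s_2,\ldots$ in an order respecting ancestry, and define a labelling $\phi:T\to\bs$ by $\phi(\emptyset)=U$ and, at each stage $k$, using the finite-avoidance lemma to pick a $(\dag)$-cover $\{\phi(s_k^\frown n):n\in\omega\}\subseteq\bs\setm\{\phi(s_k)\}$ of $\phi(s_k)$ that is disjoint from the finite set of labels $\{\phi(s_j):j\le k\}$ of parity opposite to $|s_k|+1\pmod 2$. Then set $\bs_0^U=\{\phi(s):|s|\ge 2,\,|s|\text{ even}\}$ and $\bs_1^U=\{\phi(s):|s|\text{ odd}\}$. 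Coverage and filling now drop out of the construction: $\bs_1^U$ contains the level-$1$ $(\dag)$-cover of $U$, $\bs_0^U$ contains a cover at level~$2$, and for every $\phi(s)\in\bs_i^U$ its children $\{\phi(s^\frown n):n\in\omega\}$ lie in $\bs_{1-i}^U$ and cover $\phi(s)$.

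The main obstacle is disjointness $\bs_0^U\cap\bs_1^U=\emptyset$: without care, the same $V\in\bs$ could end up labelled on nodes of different parities, which would collapse the two candidate families into a non-disjoint pair. The finite avoidance lemma is exactly the device for dodging such collisions, and the role of the ancestor-respecting enumeration of $T$ is to guarantee that at each stage only finitely many already-placed labels of the wrong parity can interfere with our choice of $(\dag)$-cover.
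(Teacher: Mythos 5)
Your overall architecture is the paper's: reduce via Theorem \ref{gp} to producing, for each $U\in\bs$, a good pair covering $U$, and build the two halves by an alternating $\omega$-stage construction with bookkeeping, choosing each new cover inside $\bs$ minus everything already committed to the other side. Your finite avoidance lemma is also correct as stated and proved (the replacement cover of $W$ consists of proper subsets of $W\subsetneq V$, so it automatically misses $V$, and only the single element $W$ of the inductive-hypothesis cover can be forbidden). The gap is in the disjointness step. At stage $k$ you must choose $\{\phi(s_k^\frown n):n\in\oo\}$ so that none of its members coincides with any \emph{already assigned} label of the opposite parity; but the already assigned labels are not $\{\phi(s_j):j\le k\}$ --- they are $\phi(\empt)$ together with \emph{all} $\oo$ children of every previously processed node, and once $k\ge 2$ infinitely many of these have the parity you need to dodge (and nothing prevents infinitely many of them from being proper subsets of $\phi(s_k)$, hence genuine candidates for collision). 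So the forbidden set is countably infinite, the finite avoidance lemma does not apply, and your closing claim that the ancestor-respecting enumeration leaves only finitely many interfering labels is false.

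This cannot be repaired by simply upgrading the lemma to countable forbidden sets: if $F\supseteq\{W\in\bs: x\in W\subsetneq V\}$ happens to be countable for some $x\in V$, then no subfamily of $\bs\setm(F\cup\{V\})$ can cover $V$. The paper instead proves the avoidance claim for an arbitrary \emph{well-founded} forbidden family $\mc A$: close $W$ off under $(\dag)$ into a countable self-filling family $\mc C\ni W$, and for each $x\in W$ pass below a $\subset$-minimal member of $\mc A\cap\mc C$ containing $x$ to land outside $\mc A$; Proposition \ref{pr:well_founded} then thins the resulting cover to a well-founded one. Correspondingly, the construction keeps each partial family a \emph{finite} union of countable well-founded covers, hence itself countable and well-founded, so the strengthened lemma applies at every stage. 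If you make each of your covers well-founded and avoid the full (infinite but well-founded) set of opposite-parity labels via this stronger lemma, your tree construction goes through.
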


\begin{proof}
We need the following Claim.

\begin{clm}
If $\mc A\subs \bs$ is well-founded  then for every $W\in \bs$
there is a countable  well-founded family $\mc B(W,\mc A)\subs  \bs\setm \mc A$ with 
$\cup \mc B(W,\mc A)=W$.
\end{clm}

\begin{proof}
We can assume that $W\in \mc A$.
By ($\dag$) there is a  countable self-filling family $\mc C\subs  \bs $ 
 with  $W\in \mc C$.  Let
\begin{equation}\notag
   \mc V=\{V\in \mc C\setm \mc A: V\subsetneq W\}.
\end{equation}
Since $\mc A$ is well-founded, for each  $x\in W$ the family 
 $\{Z\in \mc A\cap \mc C:x\in Z\}$
 has a  $\subset$-minimal element $Z$.
Since $\mc C$ is self-filling, there is $V\in \mc C$ with $x\in V\subsetneq Z$.
Then $V\in \mc V$.

Thus $\bigcup \mc V=W$. Now, by Proposition \ref{pr:well_founded},
there is a well-founded family $\mc B(W,\mc A)\subs \mc V$ with 
$\bigcup \mc V=\bigcup \mc B(W,\mc A)$.
\end{proof}

By Theorem \ref{gp}, it suffices to prove that for every $U\in \bs$ 
there is a  good pair $(\mb{B}_0,\bs_1)$ from $ \bs$ such that 
$U=\cup\bs_0=\cup\bs_1$.

Fix a $U\in \bs$. Partition $\oo$ into infinite sets $\omega=\cup
\{D_m:m\in\omega\}$. 
By induction on $m\in\omega$ we build increasing chains $\{\mb B^m_0:m\in
\oo\}$ and $\{\mb B^m_1:m\in\oo\}$ from subsets of $\bs$ such that
\begin{enumerate}[(1)]
  \item $\mb B^0_0=\mb B^0_1=\empt$,
\item $\mb B^m_0$ and $\mb B^m_1$ are disjoint, well founded and countable families,
\item fix a surjective map 
$$f_m:D_m\setm (m+1)\twoheadrightarrow \{U\}\cup B^m_0\cup B^m_1,$$
\item if $m\in D_\ell$ and $f_\ell(m)=V$ then 
\begin{equation}
 \mb B^{m+1}_0=\mb B^{m}_0\cup  \mc B(V,\mb B^{m}_1)
\end{equation}
and 
\begin{equation}
 \mb B^{m+1}_1=\mb B^{m}_1\cup  \mc B(V,\mb B^{m+1}_0).
\end{equation}
\end{enumerate}

Let  $\bs_i=\cup\{\bs^m_i:m\in\oo\}$ for $i<2$.
The  $(\mb{B}_0,\bs_1)$  is a  good pair and $U=\cup\bs_0=\cup\bs_1$. 
Indeed, if $V\in \mb{B}_i\cup\{U\}$ then
$V\in \mb{B}^m_i\cup\{U\}$ for some $m\in {\omega}$ and so 
$f_m(\ell)=V$ for some $\ell\in D_m\setm (m+1)$. 
Thus there is a family $\mc B\subs \mb B^{\ell+1}_{1-i}\subs \mb B_{1-i}$ 
with $\bigcup \mc U=V$.
\end{proof}

\begin{cor}\label{herLind} Locally countable or hereditarily Lindel\"of spaces
are base resolvable without assuming any separation axioms.
\end{cor}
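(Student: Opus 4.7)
The plan is to derive Corollary~\ref{herLind} from the preceding corollary, which asserts that any self-filling family $\bs$ satisfying the countable-filling property $(\dag)$ is resolvable. In both cases I identify a self-filling sub-base $\bs^\ast$ of the given base $\bs$, verify that $\bs^\ast$ satisfies $(\dag)$, apply the preceding corollary to partition $\bs^\ast$ into two bases of $X$, and then distribute the elements of $\bs\setm\bs^\ast$ arbitrarily between the two parts; adding open sets to a base preserves the property of being a base.

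For a hereditarily Lindel\"of space $X$ I would take $\bs^\ast=\bs$. Granting that $\bs$ is self-filling, i.e.\ that for every $U\in\bs$ and every $x\in U$ there is $V\in\bs$ with $x\in V\subsetneq U$, the verification of $(\dag)$ is immediate: the open subspace $U$ is Lindel\"of, the family $\{V\in\bs:V\subsetneq U\}$ is an open cover of $U$ by self-filling, and any countable subcover of it witnesses $(\dag)$ for $U$.

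For a locally countable space $X$ with base $\bs$ I instead take $\bs^\ast=\{V\in\bs:|V|\leq\oo\}$. Local countability ensures that $\bs^\ast$ is still a base of $X$: for any $x\in X$ and any open $W\ni x$, fix a countable open neighbourhood $N_x$ of $x$ and pick $V\in\bs$ with $x\in V\subs W\cap N_x$; then $V\subs N_x$ is countable, so $V\in\bs^\ast$. The property $(\dag)$ then holds trivially for $\bs^\ast$, since each $U\in\bs^\ast$ is countable, so choosing a proper sub-basic neighbourhood in $\bs^\ast$ at each point of $U$ yields an at-most countable subfamily of $\bs^\ast$ covering $U$.

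The principal obstacle in both cases is the verification of self-filling, i.e.\ ruling out a $\subs$-minimal basic neighbourhood at any point $x\in X$. Under $T_1$ this is straightforward --- pick $y\in U\setm\{x\}$ and take a basic sub-neighbourhood of $x$ avoiding $y$ --- so the essential content of Corollary~\ref{herLind} is carrying out this step without any separation axioms, using only the dense-in-itself hypothesis in combination with the local countability or hereditary Lindel\"ofness of $X$.
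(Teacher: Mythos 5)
Your derivation from the preceding corollary is exactly the route the paper intends (the paper offers no separate proof of Corollary \ref{herLind}), and the two reductions are handled correctly as far as they go: for hereditarily Lindel\"of spaces $(\dag)$ follows from Lindel\"ofness of the open subspace $U$ once one knows that $\{V\in\bs:V\subsetneq U\}$ covers $U$, and for locally countable spaces one does need to pass first to the subfamily $\bs^*$ of countable members of $\bs$, resolve that, and distribute $\bs\setm\bs^*$ arbitrarily.

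However, the one step you explicitly leave open --- that the base (or $\bs^*$) is self-filling --- is a genuine gap, and it cannot be closed from the stated hypotheses. Without separation axioms a dense-in-itself space may have a point with a minimal open neighbourhood, and then no base is self-filling and in fact no base is resolvable. Concretely, let $X=\oo$ with the topology $\{\empt\}\cup\{[n,\oo):n\in\oo\}$. This space is countable (hence hereditarily Lindel\"of and locally countable), $T_0$, and has no isolated points, since every nonempty open set is infinite; yet every base must contain each $[n,\oo)$ (any basic set containing $n$ and contained in $[n,\oo)$ equals $[n,\oo)$), and no base can be split into two bases because $[0,\oo)=X$ is the only open set containing the point $0$. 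So the corollary implicitly requires the additional hypothesis that no point of $X$ has a minimal open neighbourhood --- equivalently, that every base is self-filling --- which holds automatically for dense-in-itself $T_1$ spaces but is not a consequence of dense-in-itself alone. Your closing paragraph's expectation that self-fillingness can be extracted from the dense-in-itself hypothesis together with local countability or hereditary Lindel\"ofness is therefore misplaced: the missing step is not a surmountable technicality but a hypothesis that must be added to the statement (and is silently assumed by the paper).
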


Our next corollary establishes that every reasonable space admits a resolvable
base.

\begin{cor} \label{finunion} Suppose that $\bs$ is a base closed under finite
unions in a $T_1$ topological space. Then $\bs$ is resolvable.
\end{cor}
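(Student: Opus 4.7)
The plan is to invoke Theorem~\ref{gp}, which reduces the problem to producing, for each $U \in \bs$, a good pair $(\bs_0^U, \bs_1^U) \subseteq \bs$ with $U = \bigcup \bs_0^U = \bigcup \bs_1^U$, after first verifying that $\bs$ is self-filling. Self-filling is immediate from $T_1$ and dense-in-itself: given $U \in \bs$ and $x \in U$, pick any $y \in U \setminus \{x\}$; since $\{y\}$ is closed, $U \setminus \{y\}$ is an open neighborhood of $x$, and the base property yields $V \in \bs$ with $x \in V \subseteq U \setminus \{y\}$, so $V \subsetneq U$.

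I would then aim to verify condition $(\dag)$ of the preceding corollary for $\bs$ and apply that corollary directly. Thus the main step is: for every $U \in \bs$, produce a countable family $\mc U \subseteq \bs \setminus \{U\}$ with $\bigcup \mc U = U$. Pick two distinct $y_0, y_1 \in U$ (they exist since $|U|$ is infinite in a $T_1$ dense-in-itself space); by $T_1$ and the base property, there are $V_0, V_1 \in \bs$ with $y_i \in V_i \subseteq U \setminus \{y_{1-i}\}$, so both are proper $\bs$-subsets of $U$. Closure of $\bs$ under finite unions now lets us form $V_0 \cup V_1 \cup V_2 \cup \cdots$, where at stage $n$ a new $V_n \in \bs$ is chosen (via $T_1$ and the base property) to contain a currently uncovered point while still missing one of the two ``anchor'' points $y_0, y_1$; this keeps every partial union inside $\bs$ and strictly below $U$. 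An $\omega$-step bookkeeping, enumerating the points of $U$ that need to be covered, then produces a countable family whose union is $U$, establishing $(\dag)$.

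The main obstacle is showing that the $\omega$-step enumeration actually exhausts $U$: a priori, countably many proper $\bs$-subsets might miss points if $U$ has large character. This is precisely where closure under finite unions becomes decisive, because it allows the running union to be treated as a single $\bs$-element at each stage, so that $T_1$ supplies a proper extension containing any chosen next point while closure keeps the result inside $\bs$. Once $(\dag)$ is verified, the preceding corollary supplies a good pair for every $U \in \bs$ and hence, by Theorem~\ref{gp}, the resolvability of $\bs$.
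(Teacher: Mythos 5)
Your reduction to condition $(\dag)$ of the preceding corollary is where the argument breaks down. You correctly identify the obstacle — that an $\omega$-step enumeration of points of $U$ cannot exhaust an uncountable $U$ — but the proposed fix does not address it: closure under finite unions keeps each \emph{partial} union $V_0\cup\dots\cup V_n$ inside $\bs$ and properly below $U$, which lets the induction continue, but it gives no reason why the union of the resulting countable increasing chain should be all of $U$ rather than some proper open subset. In fact $(\dag)$ can genuinely fail for a base closed under finite unions. Let $X$ be the long ray $[0,\oo_1)$ (lexicographic $\oo_1\times[0,1)$), a dense-in-itself $T_2$ LOTS, and let $\bs$ consist of $X$ itself together with all finite unions of bounded open intervals. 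This is a base closed under finite unions, every member of $\bs\setm\{X\}$ is bounded, and since $\operatorname{cf}(\oo_1)>\oo$ a countable union of bounded sets is bounded; hence $X\in\bs$ is not the union of any countable subfamily of $\bs\setm\{X\}$, so $(\dag)$ fails at $U=X$. Thus no amount of bookkeeping will verify $(\dag)$, and the route through the preceding corollary is blocked. (Your verification that $\bs$ is self-filling, and the observation that $(\dag)$ \emph{would} suffice if it held, are both fine.)

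The paper avoids countability altogether: for a fixed $U\in\bs$ it takes a single strictly decreasing chain $U_0\supsetneq U_1\supsetneq\cdots$ in $\bs$ with $U_0\subs U$ and witnesses $y_n\in U_{n-1}\setm U_n$, and sorts \emph{all} of $\bs\cap\mc P(U)$ into two classes $\bs^U_0,\bs^U_1$ according to the parity of the least $k$ with $U_{2k+i}\subs V$ but $U_{2k-1+i}\not\subs V$. These classes are typically uncountable; closure under finite unions enters only through the device $V=U_{2k+i}\cup W$ (with $W\in\bs$, $x\in W\subs U\setm\{y_{2k+i}\}$ supplied by $T_1$), which simultaneously puts $V$ into the desired class and makes it contain a prescribed point. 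This yields a good pair covering $U$ directly, and Theorem~\ref{gp} finishes the proof. If you want to salvage your approach, you would need to replace $(\dag)$ by a parity-type device of this kind rather than by a countable exhaustion.
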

\begin{proof}We apply Theorem \ref{gp} again: fix $U\in \bs$ and we construct a
good pair covering $U$. Fix an arbitrary  strictly decreasing sequence
$\{U_n:n\in\omega\}\subseteq \bs$  such that $U_ 0\subseteq U$ and fix $y_n\in U_{n-1}\setm U_n$ for $n\in \omega\setm \{0\}$. Let
$$\bs^U_i=\{V\in\bs\cap \mc P(U): \exists k\in\omega\setm \{0\}:U_{2k+i}\subseteq V \text{
but } U_{2k-1+i}\not\subseteq V\}$$ for $i<2$. It should be clear that $\bs^U_0\cap \bs^U_1=\emptyset$.

Next we prove that $U=\cup \bs ^U _i$ for $i<2$. Fix $i<2$ and note that $\{U_{2k+i}:k\in\omega\setm \{0\}\}\subs \bs ^U _i$. Now fix $x\in U$ and we prove that $x\in \cup \bs ^U _i$; without loss of generality we can suppose that $x\notin U_{2+i}$. Find any $k\in \omega$ so that $y_{2k+i}\neq x$ and take $W\in \bs$ so that $x\in W\subs U\setm \{y_{2k+i}\}$; here we used that $\bs$ is a base of a $T_1$ topology. Note that $V=U_{2k+i}\cup W\in \bs$ as $\bs$ is closed under finite unions and that $x\in V\in \bs ^U_i$.

Finally we show that $(\bs^U_0,
\bs^U_1)$ is a good pair; we will show that $\bs ^U_0$ fills $\bs ^U_1$, the other direction is completely analogous. Fix $V\in \bs ^U_1$ and fix a point $z\in V$. Find an $l\in \omega$ so that $U_{2l-1}\subs V$ and $z\neq y_{2l}$. As $\bs$ is a base, there is $W\in\bs$ so that $z\in W\subs V\setm \{y_{2l}\} $. Let $V'=U_{2l}\cup W$; as $\bs$ is closed under finite unions we have $V'\in \bs$, moreover $V'\in \bs ^U_0$ as witnessed by $U_{2l}\subs V'$ but $U_{2l-1}\not\subseteq V'$. Finally, $z\in V'\subs V$ as we wanted. 
\end{proof}

\begin{cor}\label{T1top} The set of all open sets in a $T_1$ topological space
is resolvable.
\end{cor}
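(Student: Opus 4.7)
The plan is to apply Corollary~\ref{finunion} to the trivial base $\bs = \tau$ of $X$. Indeed, $\tau$ is itself a base for $X$ (every open set $U$ is the union of the one-element family $\{U\}$), and it is closed under arbitrary unions, in particular under finite unions, by the axioms of a topology. Since $X$ is $T_1$ and, by the standing convention of the paper, dense-in-itself, all the hypotheses of Corollary~\ref{finunion} are satisfied.

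Consequently $\tau$ admits a partition into two bases of $X$, which is exactly what it means for $\tau$ to be resolvable. There is no genuine obstacle here: the entire combinatorial content, namely the construction of the disjoint decreasing chain inside an arbitrary $U\in\bs$ and the use of $T_1$ to remove individual points while staying inside $\bs$, has already been carried out in the proof of Corollary~\ref{finunion}. The present corollary is simply the most natural specialization of that result, obtained by taking the base to be the entire topology.
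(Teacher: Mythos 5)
Your proposal is correct and is exactly the argument the paper intends: Corollary \ref{T1top} is stated without proof precisely because $\tau$ is a base closed under (finite) unions, so it is an immediate instance of Corollary \ref{finunion}. Nothing further is needed.
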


Let MA(Cohen) denote Martin's axiom
restricted to the partial orderings of the form 
$Fn({\kappa}, 2, {\omega})$ for some ${\kappa}$ 
where, $Fn({\kappa}, 2,{\omega})$ is the poset   of functions from some
finite subset of ${\kappa}$ to 2 ordered by reverse inclusion.

\begin{cor}\label{martin}Under MA(Cohen) every space $X$ of local size
$<2^\omega$ is base resolvable without assuming any separation axioms.
\end{cor}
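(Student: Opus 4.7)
The plan is to adapt the proof of the earlier corollary deriving resolvability from property $(\dag)$ by replacing ``countable'' with ``of cardinality $<2^{\omega}$'' throughout and using MA(Cohen) in place of plain $\omega$-recursion. Given a base $\bs$ of $X$, I would first pass to $\bs^{*}=\{V\in\bs:|V|<2^{\omega}\}$, which remains a base since the local size hypothesis supplies, for each $y$ in an open set $W$, some $V\in\bs$ with $y\in V\subseteq W$ and $|V|<2^{\omega}$; any partition of $\bs^{*}$ into two bases then extends to a partition of $\bs$ by distributing $\bs\setm\bs^{*}$ arbitrarily. I would then check the $<2^{\omega}$-analogue of $(\dag)$: for every $U\in\bs^{*}$ there is $\mc U\in[\bs^{*}\setm\{U\}]^{<2^{\omega}}$ with $U=\bigcup\mc U$, which is immediate from $|U|<2^{\omega}$, dense-in-itself, and $\bs$ being a base.

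By Theorem~\ref{gp} it then suffices, for each $U\in\bs^{*}$, to produce a good pair from $\bs^{*}$ covering $U$. I would run the recursion from the $(\dag)$-corollary proof along an index set of length $\mu<2^{\omega}$ rather than $\omega$: pairwise disjoint increasing chains $\bs^{\alpha}_{0},\bs^{\alpha}_{1}$ $(\alpha<\mu)$ of well-founded subfamilies of $\bs^{*}\cap\mc P(U)$, with a bookkeeping enumerating the cover demands $(V,i)$ for each $V$ appearing in some earlier $\bs^{\beta}_{j}$ and each $i<2$. The one-step extension is provided by the $<2^{\omega}$-analogue of the Claim used in that proof, which extends any well-founded family by a well-founded cover of a prescribed $V$ using strictly smaller members of $\bs^{*}$ disjoint from any prescribed well-founded family. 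MA(Cohen), applied to the Cohen poset $Fn(\mu,2,\omega)$, ensures a generic filter meeting all fewer-than-$2^{\omega}$ demands, so that $\bs^{U}_{i}=\bigcup_{\alpha<\mu}\bs^{\alpha}_{i}$ yields the desired good pair.

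The main obstacle is to maintain, in the absence of any separation axioms, that the dense sets in $Fn(\mu,2,\omega)$ involved really are dense: at every $V\in\bs^{*}\cap\mc P(U)$ one needs infinitely many strictly smaller members of $\bs^{*}$ lying outside any prescribed finite or well-founded subfamily. The $<2^{\omega}$-analogue of $(\dag)$ together with $X$ being dense-in-itself supplies exactly this—any alleged ``minimum'' basic neighborhood would contradict the strict-subset clause of $(\dag)$—and once it is in place Theorem~\ref{gp} closes the argument.
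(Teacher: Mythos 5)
Your proposal conflates two incompatible strategies and the recursion half of it breaks down. The transfinite recursion of length $\mu<2^\omega$ modelled on the proof of the $(\dag)$-corollary cannot be continued past stage $\omega$: the one-step extension there (the Claim producing $\mc B(W,\mc A)\subseteq \bs\setminus\mc A$) requires the family $\mc A$ to be avoided to be \emph{well-founded}, and while finite unions of well-founded families are well-founded, the union $\bigcup_{\beta<\omega}\bs^\beta_{i}$ at the first limit stage need not be (indeed the whole point of the construction is to manufacture infinite decreasing chains inside each $\bs^U_i$). Once well-foundedness is lost you have no tool guaranteeing that the complement of what you have committed so far still fills a prescribed $V$, so the "cover demands" cannot all be met by recursion. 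Moreover, your appeal to MA(Cohen) is attached to the wrong object: a filter on $Fn(\mu,2,\omega)$ where $\mu$ merely indexes the stages is just a function $\mu\to 2$ and does not encode any of the choices your recursion has to make, so it cannot rescue the limit stages.

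The paper's proof drops the recursion entirely and lets the generic object \emph{be} the partition. Fix $U\in\bs$ with $\kappa=|U|<2^\omega$, choose a self-filling $\bs_U\in[\bs]^{\kappa}$ with $\bigcup\bs_U=U$, and force with $Fn(\bs_U,2,\omega)$, the finite partial $2$-colorings of the family $\bs_U$ itself. For $x\in V\in\bs_U$ and $i<2$ the set $D_{x,V,i}$ of conditions $f$ having some $W\in f^{-1}(i)$ with $x\in W\subsetneq V$ is dense (self-filling gives infinitely many such $W$, by iterating $x\in W_1\subsetneq W_0\subsetneq V$), and there are only $\kappa<2^\omega$ many such dense sets, so MA(Cohen) yields a total coloring whose color classes form the good pair required by Theorem \ref{gp}. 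Your observation that strict filling provides infinitely many candidates below each $V$ is exactly the density argument needed here; the missing idea is to make the poset color the members of $\bs_U$ directly rather than to drive a stage-by-stage construction.
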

\begin{proof}Fix a base $\bs$ of $X$; we may assume that $|U|<2^\omega$ for all $U\in \bs$. We apply Theorem \ref{gp} to prove that $\bs$ is resolvable as a self filling family which in turn will imply that $\bs$ is a resolvable base. Fix $U\in \bs$ and we construct a good
pair covering $U$. Let $\kappa=|U|$ and select $\bs_U\in [\bs]^{\kappa}$ which fills itself and $\cup
\bs_U=U$. Now consider the ccc partial order $\mb P= Fn(\bs_U,2, \omega)$, i.e.
the set of all finite partial functions from $\bs_U$ to 2. Now consider
$$D_{x,V,i}= \{f\in \mb P: \text{ there is } W\in f^{-1}(i): x\in W \subset
V\}$$ for $i<2$ and $ x\in V\in \mb B_U$; note that each $D_{x,V,i}$ is dense in
$\mb P$. Hence there is a filter $G\subseteq \mb P$ which intersects 
$D_{x,V,i}$ for $i<2$ and $x\in V\in \mb B_U$. Let 
$\mb B_i=\{V\in \mb B_U: (\cup
G)(V)=i\}$ for $i<2$ and note that $(\mb B_0,\mb B_1)$ is the desired good pair.
\end{proof}

\section{Thinning self filling families} \label{thinning}

Let $\mbb B$ be a self filling family; note that $\bs$ is \emph {redundant} in
the sense that $\bs \setminus \mc U$  still fills $\bs$ for a finite or more
generally, a well founded family $\mc U$. 
\begin{dfn} We say that $\mc U\subseteq \bs$ is \emph{negligible} iff $\bs
\setminus \mc U$  still fills $\bs$.
\end{dfn}

Our aim in this section is to show that every self filling family $\bs$ contains
a negligible subfamily of size $|\bs|$. Note that a base $\bs$ for a space $X$
is resolvable iff it contains a negligible subfamily $\mc U\subseteq \bs$ such
that $\mc U$ is a base of $X$ as well. We will make use of the following
definitions:

\begin{dfn}
 If $\bs$ fills itself then let $$L(U,\bs)=\min\{|\mc V|:\mathcal V\subseteq \bs\setm
\{U\},U=\cup \mc V\}$$ for $U\in \bs$.
\end{dfn}

\begin{obs} \label{neglobs}Suppose that $\bs$ fills itself and $\mc U \subseteq
\bs$. 
 \begin{enumerate}[(1)]
 \item\label{fill_negligible} 
If $\bs \setminus \mc U$ fills $\mc U$ then $\mc U$ is negligible.
  \item\label{well_founded_negligible} 
If $\mc U$ is well founded then $\bs \setminus \mc U$ fills $\mc U$ and so $\mc U$ 
is negligible; in particular,  if $\mc U$ is weakly increasing, then 
$\mc U$ is negligible.
 \end{enumerate}

\end{obs}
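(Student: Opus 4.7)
The plan is to reduce both parts to tools already available in the excerpt. For part (1), the statement $\bs\setm\mc U$ fills $\bs$ is exactly what Lemma \ref{lm:extend_fill}(1) produces when we set $\mc A=\bs\setm\mc U$, $\mc B=\mc U$, $\mc A'=\mc B'=\bs$: the hypothesis gives that $\mc A$ fills $\mc B$, the self-filling of $\bs$ gives that $\mc A'$ fills $\mc B'$, and the lemma outputs that $\mc A\cup(\mc A'\setm \mc B)=(\bs\setm\mc U)\cup(\bs\setm\mc U)=\bs\setm\mc U$ fills $\bs$. So $\mc U$ is negligible by definition. This step is essentially free.

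For part (2), the real content is to show that when $\mc U$ is well-founded, $\bs\setm\mc U$ fills $\mc U$. I would fix $U\in\mc U$ and an arbitrary $x\in U$, and then produce some $V\in\bs\setm\mc U$ with $x\in V\subsetneq U$. The key set is $\mc V_x=\{V\in\bs:x\in V\subsetneq U\}$, which is nonempty because $\bs$ fills itself. If $\mc V_x\cap\mc U=\emptyset$, any element of $\mc V_x$ works. Otherwise, I would use well-foundedness of $\mc U$ to pick a $\subset$-minimal $V_0\in\mc V_x\cap\mc U$, then apply self-filling of $\bs$ to $V_0$ to obtain some $V_1\in\bs$ with $x\in V_1\subsetneq V_0$. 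Minimality of $V_0$ in $\mc V_x\cap\mc U$ forces $V_1\notin\mc U$, so $V_1\in\bs\setm\mc U$ is the required witness. Part (1) then converts this into negligibility.

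For the ``in particular'' clause, I would verify that weakly increasing $\Rightarrow$ well-founded, so the previous case applies. If $\prec$ witnesses that $\mc U$ is weakly increasing and $A_0\supsetneq A_1\supsetneq\dots$ were a strictly descending chain in $\mc U$, then the $\prec$-minimal $A_k$ among $\{A_n:n\in\omega\}$ would fail: $A_{k+1}\subsetneq A_k$ forces $A_{k+1}\setm A_k=\emptyset$, so by definition of weakly increasing we cannot have $A_k\prec A_{k+1}$, hence $A_{k+1}\prec A_k$, contradicting minimality.

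The main ``obstacle'' is really only the minor case split in (2); the well-foundedness assumption on $\mc U$ is precisely the one needed to terminate the otherwise natural recursive descent inside $\mc U$, and once that observation is in place both claims fall out immediately.
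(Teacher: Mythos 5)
Your proof is correct, and since the paper states this as an unproved Observation, your argument supplies exactly the routine verification it implicitly intends: part (1) is immediate (either via Lemma \ref{lm:extend_fill}(1) as you apply it, or by a direct two-step refinement), and for part (2) the $\subset$-minimal element of $\{V\in\mc U: x\in V\subsetneq U\}$ guaranteed by well-foundedness, refined once more using self-filling of $\bs$, is the intended witness. Your derivation of well-foundedness from weak increasingness likewise matches what the paper already asserts without proof in Proposition \ref{pr:well_founded}.
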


Our first proposition establishes the main result for self filling families $\bs$ with $cf |\bs|=|\bs|$.

\begin{prop}
 Suppose that $\bs$ fills itself and $\kappa=|\bs|$ is regular. Then $\bs$
contains a negligible family of size $\kappa$.
\end{prop}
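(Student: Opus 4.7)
The plan is to construct by transfinite recursion of length $\kappa$ a weakly increasing subfamily $\mc U = \{W_\alpha : \alpha < \kappa\}$ of $\bs$; by Observation~\ref{neglobs}(\ref{well_founded_negligible}) any such $\mc U$ is automatically negligible, so it suffices to exhibit one of size $\kappa$. Call $U \in \bs$ \emph{slim} if $|\{V \in \bs : V \subseteq U\}| < \kappa$ and \emph{stout} otherwise, and let $\bs_{\rm sl}$ denote the set of slim elements of $\bs$.

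If $|\bs_{\rm sl}| = \kappa$, then I would build $\mc U$ entirely from slim elements: at stage $\alpha < \kappa$, given $\{W_\beta : \beta < \alpha\} \subseteq \bs_{\rm sl}$, the forbidden set
\[
F_\alpha = \bigcup_{\beta < \alpha}\{V \in \bs : V \subseteq W_\beta\}
\]
is a union of fewer than $\kappa$ sets each of size less than $\kappa$, so by the regularity of $\kappa$ it has size less than $\kappa$. Hence $\bs_{\rm sl} \setminus F_\alpha$ is non-empty and one can pick $W_\alpha$ there; by construction $W_\alpha \not\subseteq W_\beta$ for every $\beta < \alpha$, so the resulting $\mc U$ is weakly increasing.

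In the opposite case $|\bs_{\rm sl}| < \kappa$, the natural reduction is to pass to $\bs' = \{V \in \bs : V \subseteq U_0\}$ for a $\subsetneq$-minimal stout element $U_0 \in \bs$. Since $\bs$ is self-filling, so is $\bs'$, and it has size $\kappa$ because $U_0$ is stout; by the minimality of $U_0$, every element of $\bs' \setminus \{U_0\}$ is slim in $\bs$ (and hence in $\bs'$), so the previous case applies inside $\bs'$ and produces the desired $\mc U$ as a subfamily of $\bs$.

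The main obstacle is the pathological subcase where the stout elements have no $\subsetneq$-minimal element: the descent through stout elements has no natural terminal point since intersections of descending chains in $\bs$ need not lie in $\bs$. Here I would rely on the two features that each stout $U$ has $\kappa$ many stout elements in $\{V \in \bs : V \subseteq U\}$ (because $|\bs_{\rm sl}| < \kappa$) and that $\kappa$ is regular, to perform a more careful recursive choice: at each stage, rather than freely picking from $\bs_{\rm sl}$, one selects $W_\alpha$ from a specific layer of stout elements arranged so that its down-cone contribution to $F_\alpha$ remains controlled, extracting the weakly increasing family of size $\kappa$ directly. This is the delicate step I would expect to require the most bookkeeping.
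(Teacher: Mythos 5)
Your Case 1 ($|\bs_{\rm sl}|=\kappa$) is correct, and so is the subcase of Case 2 in which a $\subsetneq$-minimal stout element exists. The gap is the remaining subcase, and it is not a matter of ``delicate bookkeeping'': the strategy you commit to there cannot succeed, because you insist on producing a \emph{weakly increasing} subfamily of size $\kappa$, and such a subfamily need not exist. Take $\kappa=\omega_1$, fix any $L_0\in[\mathbb{R}]^{\omega_1}$, let $L=\{x+q: x\in L_0,\ q\in\mathbb{Q},\ q<0\}$ and $\bs=\{L\cap(-\infty,y):y\in L\}$. Every $y\in L$ is a non-attained supremum of $L\cap(-\infty,y)$, so $\bs$ is a self-filling family of size $\omega_1$ (regular) which is a chain under $\subseteq$. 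A weakly increasing subfamily of a chain is well-ordered by $\subsetneq$ and hence corresponds to a strictly increasing well-ordered sequence of reals, so it is countable. Moreover, this example lands precisely in your unresolved subcase: the slim elements form a countable initial segment of the chain (an uncountable initial segment all of whose proper initial segments are countable would again produce an increasing $\omega_1$-sequence in $\mathbb{R}$), and a minimal stout element is impossible, since its down-cone would be contained in the countable set of slim elements together with the element itself.

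The missing idea is to drop the requirement that $\mc U$ be weakly increasing (equivalently, well-founded) and instead build, alongside $\mc U$, a disjoint witness $\mc V\subseteq\bs\setminus\mc U$ that fills $\mc U$; this already gives negligibility by Observation \ref{neglobs}(1). This is what the paper does: after disposing of the degenerate case in which some $U$ satisfies $L(U,\bs)\geq\kappa$ (which \emph{does} yield a weakly increasing family of size $\kappa$, via Proposition \ref{pr:well_founded}), it recursively constructs increasing pairs $\mc U_\xi,\mc V_\xi\in[\bs]^{<\kappa}$ with $\mc V_\xi$ filling $\mc U_\xi$, using the regularity of $\kappa$ only to pick a new element outside $\bigcup_{\xi<\zeta}(\mc U_\xi\cup\mc V_\xi)$ at each stage, and Lemma \ref{lm:extend_fill}(1) to preserve the filling property. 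Note also that the relevant dichotomy is on $L(U,\bs)$, the least size of a subfamily covering $U$, rather than on the cardinality of the down-cone of $U$: a stout element in your sense may still be covered by countably many of its proper subsets, which is why stoutness by itself is not the real obstruction.
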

\begin{proof} We can suppose that $L(U,\bs)<\kappa$ for every $U\in\bs$;
otherwise we can find a weakly increasing subfamily of size $\kappa$ which is
negligible by (1) and (2) of Observation \ref{neglobs}.

 It suffices to define a sequence $\mc U_\xi,\mc V_\xi\in [\bs]^{<\kappa}$
for $\xi<\kappa$ such that 
\begin{enumerate}[(1)]
\item $\mc U_\xi\cap\mc V_\xi=\emptyset$,
\item $\mc U_\xi\subs \mc U_\zeta$ and $\mc V_\xi\subs \mc V_\zeta$ for $\xi<\zeta<\kappa$,
 \item $\mc V_\xi$ fills $\mc U_\xi$, and 
\item  $\mc U_{\xi+1}\setm \mc U_\xi\neq\empt$;
\end{enumerate}
 Clearly, $\mc U=\cup\mc \{\mc U_\xi: \xi<\kappa\}$ will be a
negligible set of size $\kappa$ in $\bs$ by (3) of Observation \ref{neglobs}.
Suppose we have $\mc U_\xi,\mc V_\xi\in [\bs]^{<\kappa}$ for $\xi<\zeta$ as
above for some $\zeta<\kappa$; then $\bs \setminus \cup\{\mc U_\xi,\mc
V_\xi:\xi<\zeta\}\neq\emptyset$ by $\kappa$ being regular hence we can select
$U_\zeta\in \bs \setminus \cup\{\mc U_\xi,\mc V_\xi:\xi<\zeta\}$ and define
$$\mc U_\zeta=\bigcup \{\mc U_\xi:\xi<\zeta\}\cup\{U_\zeta\}.$$ Find $\mc W
\subseteq \bs\setm \{U_\zeta\}$ of size $<\kappa$ such $\cup \mc W =U_\zeta$;
define $$\mc V_\zeta =\bigcup \{\mc V_\xi:\xi<\zeta\}\cup (\mc W\setminus \mc U_\zeta).$$ 

Since $\bigcup \{\mc V_\xi:\xi<\zeta\}$ fills
$\bigcup \{\mc U_\xi:\xi<\zeta\}$ by the inductive hypothesis (3) above,
Lemma \ref{lm:extend_fill}(1) implies that   
$\mc V_\zeta$ fills $\mc U_\zeta$.
\end{proof}

\begin{thm}\label{negl}
  Suppose that $\bs$ fills itself. Then $\bs$ contains a negligible family of
size $|\bs|$.
\end{thm}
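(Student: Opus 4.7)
The plan is to separate the regular and singular cases: the preceding proposition disposes of $\kappa := |\bs|$ regular, so I focus on $\kappa$ singular with $\mu := \operatorname{cf}(\kappa)$. I then split on whether any element of $\bs$ realizes $L$-value equal to $\kappa$.

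In the first case, suppose $U \in \bs$ satisfies $L(U, \bs) = \kappa$. I will build a weakly increasing sequence $(V_\alpha)_{\alpha<\kappa}$ inside $\bs \cap \mc P(U)\setminus\{U\}$ by transfinite recursion: at stage $\alpha$, the hypothesis $|\alpha| < \kappa = L(U, \bs)$ forces $\bigcup_{\beta<\alpha} V_\beta \subsetneq U$, and since $\bs$ fills $U$ I can pick some $V_\alpha \in \bs\setminus\{U\}$ with $V_\alpha \subsetneq U$ and $V_\alpha \not\subseteq \bigcup_{\beta<\alpha} V_\beta$. The resulting chain has cardinality $\kappa$ and is negligible by Observation~\ref{neglobs}(2).

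In the remaining case where $L(U,\bs) < \kappa$ for every $U \in \bs$, I will follow the construction of the preceding proposition with one new twist: at each stage $\alpha < \kappa$ I will select $U_\alpha \in S_\alpha := \bs \setminus (\mc U_{<\alpha} \cup \mc V_{<\alpha})$ \emph{greedily}, so that $\rho(\alpha) := L(U_\alpha,\bs)$ equals $\min\{L(U,\bs) : U \in S_\alpha\}$, together with a filling $\mc W_\alpha$ of minimum size $\rho(\alpha)$. Since the pools $S_\alpha$ decrease with $\alpha$, this greedy rule forces $\rho$ to be non-decreasing: at a limit stage $\alpha$, every $U \in S_\alpha$ belongs to each earlier $S_\beta$, so $L(U,\bs) \geq \rho(\beta)$ for every $\beta < \alpha$. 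Monotonicity of $\rho$ then yields
\[
|\mc V_\alpha| \leq \sum_{\beta \leq \alpha}\rho(\beta) \leq |\alpha|\cdot \rho(\alpha) < \kappa
\]
at each stage $\alpha < \kappa$, using that $|\alpha|, \rho(\alpha) < \kappa$ and $\kappa$ is a cardinal. Hence $|\mc U_\alpha \cup \mc V_\alpha| < \kappa = |\bs|$, the pool $S_\alpha$ stays nonempty, and the recursion continues through all $\kappa$ stages. Setting $\mc U = \bigcup_{\alpha<\kappa}\mc U_\alpha$ and $\mc V = \bigcup_{\alpha<\kappa}\mc V_\alpha$, the same expansion argument as in the preceding proposition shows that $\mc V$ fills $\mc U$, so $\mc U$ is the desired negligible subfamily of $\bs$ of size $\kappa$ by Observation~\ref{neglobs}(1).

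The key new ingredient, and the main obstacle, is the monotonicity of $\rho$ under greedy selection in the singular case: without this control, the partial sums $\sum_{\beta\leq\alpha}\rho(\beta)$ could reach $\kappa$ at a limit stage of cofinality $\mu$, breaking the recursion.
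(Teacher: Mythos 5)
Your reduction to singular $\kappa$ and your first subcase (some $U$ with $L(U,\bs)=\kappa$ yields a weakly increasing, hence negligible, family of size $\kappa$) are both correct and match the paper's own reductions. The gap is in the remaining case, and it sits exactly where you yourself locate ``the main obstacle'': monotonicity of $\rho$ does \emph{not} supply the control you need. The inequality $\sum_{\beta\le\alpha}\rho(\beta)\le|\alpha|\cdot\rho(\alpha)<\kappa$ is available only \emph{after} you know the recursion has reached stage $\alpha$, i.e.\ after you know $S_\alpha\ne\empt$ so that $\rho(\alpha)$ is defined. At a limit stage $\alpha$ what you must bound in order to show $S_\alpha\ne\empt$ is $\sum_{\beta<\alpha}\rho(\beta)\le|\alpha|\cdot\sup_{\beta<\alpha}\rho(\beta)$, and a non-decreasing sequence of cardinals below a \emph{singular} $\kappa$ can perfectly well have supremum $\kappa$ once $\operatorname{cf}(\alpha)=\operatorname{cf}(\kappa)=\mu$. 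Concretely, with $\kappa=\aleph_\omega$ nothing in your argument rules out that $|\{U\in\bs:L(U,\bs)\le\aleph_n\}|=\aleph_n$ for each $n$ and that the chosen minimal filling families swallow these level sets quickly, so that the greedy $\rho$ climbs through the $\aleph_n$ along some countable ordinal $\alpha^*$; then $|\mc V_{<\alpha^*}|$ can already be $\kappa$, the pool $S_{\alpha^*}$ is empty, and the recursion dies having placed only countably many sets into $\mc U$. So the greedy rule does not prove that the recursion survives for $\kappa$ stages, and that is the whole content of the singular case.

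The paper avoids this by inverting the roles of length and width: its recursion runs for only $\mu=\operatorname{cf}(\kappa)$ stages, but at stage $\zeta$ it adds to $\mc U$ an entire weakly increasing family $\mc W'$ of size $\lambda$ (with $\lambda$ a regular cardinal above everything used so far and above $\kappa_\zeta$), chosen inside a single level set $\bs_\delta=\{U:L(U,\bs)\le\kappa_\delta\}$ so that the accompanying filling families still have size $<\kappa$; the case in which weakly increasing families have uniformly bounded size is handled separately via Hajnal's set-mapping theorem. Some device of this sort seems unavoidable: a construction like yours puts only $|\alpha|$ sets into $\mc U$ by stage $\alpha$, so it needs all $\kappa$ stages to succeed, and that is precisely what you have not established.
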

\begin{proof}
 We can suppose that $\mu=\operatorname{cf}({\kappa})<\kappa=|\bs|$ and that every weakly increasing
sequence in $\bs$ is of size less than $\kappa$
by Observation \ref{neglobs}(\ref{well_founded_negligible}).
Fix a cofinal strictly
increasing sequence of regular cardinals $(\kappa_\xi)_{\xi<\mu}$ in $\kappa$ such that $\mu
< \kappa_0$ and define $$\bs_\xi=\{U\in\bs:L(U,\bs)\leq \kappa_\xi\}$$
 for every $\xi<\mu$. So 
\begin{equation}\label{eq:B=} 
\bs=\bigcup_{\xi<\mu}\bs_\xi. 
\end{equation}
If there is a $\xi<\mu$ such that every weakly increasing sequence in $\bs$ is of size less than
$\kappa_\xi$ then $\bs=\bs_\xi$. Let us define a set mapping $F:\bs \to
[\bs]^{<\kappa_\xi^+}$ such that $U=\cup F(U)$ where $F(U)\subseteq \bs\setm
\{U\}$. As $\kappa_\xi^+<\kappa$ we can apply Hajnal's Set Mapping theorem (see
Theorem 19.2 in \cite{hajnal}): there is an $F$-free set $\mc U$ of size
$\kappa$ in $\bs$, i.e. $F(U)\cap \mc U=\emptyset$ for all $U\in \mc U$; observe
that $\mc U$ is negligible as $\cup \{F(U):U\in\mc U\}\subseteq \bs \setm \mc U$
fills $\mc U$.

Now we suppose that $\bs\neq \bs_\xi$ for $\xi<\mu$, that is there is a weakly increasing sequence in $\bs$ of size $\kappa_\xi$ for all $\xi<\mu$. It suffices to define sequences $\mc U_\xi,\mc
V_\xi\in [\bs]^{<\kappa}$ for $\xi<\mu$ such that 
\begin{enumerate}[(i)]
\item $\mc U_\xi\subs \mc U_\zeta$ and $\mc V_\xi\subs \mc V_\zeta$ for $\xi<\zeta<\kappa$,
\item $\mc U_\xi,\mc V_\xi$ are disjoint and $\kappa_\xi\leq |\mc U_\xi|$,
\item $\mc V_\xi$ fills $\mc U_\xi$.
\end{enumerate}
Indeed, the union $\cup\{\mc U_\xi:\xi<\mu\}$ is negligible in $\bs$ of size
$\kappa$  by Observation \ref{neglobs}\ref{fill_negligible}. 
because $\cup\{\mc V_\xi:\xi<\mu\}$ fills $\cup\{\mc U_\xi:\xi<\mu\}$.

Suppose we defined $\mc U_\xi,\mc V_\xi\in [\bs]^{<\kappa}$ for
$\xi<\zeta$; let $$\lambda=\bigl(|\bigcup \{ \mc U_\xi\cup \mc
V_\xi:\xi<\zeta\}|\cdot \kappa_\zeta\bigr)^+.$$ Note that $\lambda<\kappa$ thus
we can pick a weakly increasing family $\mc W\in [\bs]^\lambda$; without loss of
generality, we can suppose that $\mc W$ is disjoint from $\bigcup \{ \mc
U_\xi\cup \mc V_\xi:\xi<\zeta\}$. Note that $$\mc W=\cup\{\bs_\delta\cap \mc W:
\delta<\mu\}$$ 
by (\ref{eq:B=}),
and that $\mu<\operatorname{cf}(\lambda)=\lambda$, hence there is $\delta<\mu$
such that $\mc W'=\mc W\cap \bs_\delta$ has size $\lambda$. Define 
$$\mc U_\zeta=\mc
\bigcup\{\mc U_\xi:\xi<\zeta\}\cup \mc W'.$$

Now, for every $U\in \mc W'$ select $F(U)\in [\bs\setm \{U\}]^{\le \kappa_\delta}$
such that $U=\cup F(U)$. Define $$\mc V_\zeta=\bigcup\{\mc
V_\xi:\xi<\zeta\}\cup\bigcup\{F(U):U\in \mc W'\}\setminus \mc U_\zeta.$$ 
Note
that $\kappa_\zeta\leq |\mc U_\zeta|=\lambda$  and $|\mc V_\zeta|\leq
\lambda\cdot \kappa_\delta<\kappa$. It is only left to prove that $\mc V_\zeta$
fills $\mc U_\zeta$; in fact, it suffices to show that $\mc V_\zeta$ fills $\mc
W'$. Suppose that $\prec$ is the well ordering witnessing that $\mc W'$ is
weakly increasing and suppose that there is a $U\in \mc W'$ which is not filled
by $\mc V_\xi$; we can suppose that $U$ is $\prec$-minimal. Fix an $x\in U$
witnessing that $\mc V_\zeta$ does not fill $U$. Pick $V\in F(U)$ such that
$x\in V\subs U$; 
then $V\notin \mc V_\zeta$, so
$V\in \mc W'$ or  $V\in  \bigcup \{\mc U_\xi:\xi<\zeta\}$;
if $V\in \mc W'$ then $V\prec U$, thus $V$ is filled by $\mc
V_\zeta$ by the minimality of $U$. This contradicts the choice of $x$, hence
$V\notin \mc W'$. Thus $V\in  \bigcup \{\mc U_\xi:\xi<\zeta\}$
which is filled by $\bigcup \{\mc V_\xi:\xi<\zeta\}\subs \mc V_\zeta$ by 
the inductional hypothesis; this again contradicts the choice of $x$, which
finishes the proof.

\end{proof}

\section{Irresolvable self filling families} \label{irressec}

The aim of this section is to construct an irresolvable self filling family and
deduce the existence of a non base resolvable $T_0$ topological space.

Given a partial order $(\mb P, \leq)$ and $p,q\in \mb P$ let $$[p,q]=\{r\in \mb
P:p\leq r\leq q\}.$$ The key to our construction is the following special partition relation:

\begin{dfn}
 We say that a poset $\mb P$ without maximal elements satisfies $$\mb P \to
(I_\omega)^1_2$$ iff for every partition $\mb P=D_0\cup D_1$ there is $i<2$ and
strictly increasing $\{p_n:n\in\omega\}\subseteq D_i$ such that
$[p_0,p_n]\subseteq D_i$ for every $n\in \omega$. The negation is denoted by
$\mb P \nrightarrow (I_\omega)^1_2$.
\end{dfn}

The above definition is motivated by the following:

\begin{obs}
 For any irresolvable self filling family $\mb B\subseteq \mc P (X)$ the partial
order $\mb P=(\mb B, \supseteq)$ satisfies  $\mb P \to (I_\omega)^1_2$.
\end{obs}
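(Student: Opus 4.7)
My plan is to unpack the two sides of the implication and read the chain directly off the failure of resolvability. I fix an arbitrary partition $\mb B = D_0\cup D_1$ and aim to produce $i<2$ together with a strictly $\supsetneq$-decreasing sequence $p_0\supsetneq p_1\supsetneq \dots$ in $D_i$ whose initial intervals $[p_0,p_n]=\{V\in \mb B:p_n\subseteq V\subseteq p_0\}$ lie entirely in $D_i$. Since $\mb B$ is irresolvable, this particular partition does not resolve $\mb B$, hence at least one of $D_0,D_1$ fails to fill $\mb B$; without loss of generality $D_0$ does.

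From this failure I extract a local anchor. There must exist $U\in \mb B$ and $x\in U$ such that no $V\in D_0$ with $V\subsetneq U$ contains $x$; rephrased, \emph{every} $V\in \mb B$ with $V\subsetneq U$ and $x\in V$ lies in $D_1$. This is the key observation: relative to the anchor $(U,x)$, all strictly smaller members of $\mb B$ that witness $x\in U$ are automatically in $D_1$. I will build the chain there, using $x$ as a persistent base point.

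Next I recursively use the self-filling hypothesis. Pick $p_0\in \mb B$ with $x\in p_0\subsetneq U$, and having $p_n$ pick $p_{n+1}\in \mb B$ with $x\in p_{n+1}\subsetneq p_n$. Each $p_n$ is a strict subset of $U$ containing $x$, so by the key observation each $p_n$ lies in $D_1$; the sequence is strictly $\supsetneq$-decreasing by construction. For the interval condition, if $V\in\mb B$ satisfies $p_n\subseteq V\subseteq p_0$, then $V\subseteq p_0\subsetneq U$ forces $V\subsetneq U$, while $x\in p_n\subseteq V$ gives $x\in V$, so the key observation yields $V\in D_1$; thus $[p_0,p_n]\subseteq D_1$, completing the construction with $i=1$.

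I do not foresee any substantive obstacle. The one bookkeeping subtlety is that $U$ itself could land in either $D_0$ or $D_1$, which is why I start the chain strictly below $U$ with $p_0\subsetneq U$ rather than with $p_0=U$; this single move sidesteps the question of where $U$ lives, after which everything reduces to iterating the self-filling axiom.
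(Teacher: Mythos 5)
Your proof is correct and follows essentially the same route as the paper's: extract from the failure of one class to fill $\mb B$ an anchor $U$ and point $x$ so that all members of $\mb B$ below $U$ through $x$ land in the other class, then iterate self-filling to get a strictly decreasing chain through $x$ whose intervals are automatically homogeneous. Your explicit care in starting strictly below $U$ tidies up a point the paper glosses over, but the argument is the same.
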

\begin{proof}Consider a partition of $\mb P=(\mb B, \supseteq)$ into sets $D_0,
D_1$; as $\bs$ is irresolvable, there is $i<2$, $x\in X$ and $U\in D_i$ such
that $V\in D_i$ for every $V\in\bs$ with $x\in V\subseteq U$. Pick a strictly
decreasing sequence $\{V_n:n\in\oo\}\subseteq \bs$ such that $x\in V_n\subseteq
U$ for every $n\in\oo$; clearly, $[V_0,V_n]\subseteq D_i$ for every $n\in\oo$.
\end{proof}

Our next aim is to find a partial order $\mb P$ first with  $\mb P \to
(I_\omega)^1_2$; note that trees or $Fn(\kappa,2)$ cannot satisfy $\mb P \to
(I_\omega)^1_2$. Moreover:

\begin{prop}
  $\mb P \nrightarrow (I_\omega)^1_2$ for every countable poset $\mb P$ without
maximal elements.
\end{prop}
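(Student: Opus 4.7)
The plan is to construct, for any countable poset $\mathbb{P}$ without maximal elements, a $2$-coloring $c:\mathbb{P}\to 2$ witnessing $\mathbb{P}\nrightarrow (I_\omega)^1_2$. A useful reformulation is that $c$ should ensure, for every $p\in\mathbb{P}$, that the \emph{reachable set}
\[
R(p)=\{q\geq p \colon [p,q]\subseteq c^{-1}(c(p))\}
\]
contains no infinite chain; indeed, a bad sequence starting at $p$ in color $c(p)$ is precisely an infinite chain inside $R(p)$.

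I would enumerate $\mathbb{P}=\{p_n:n<\omega\}$ and build $c$ as the union of a chain of finite partial colorings $c_0\subseteq c_1\subseteq\cdots$, driven by a bookkeeping enumeration $\{(a_k,b_k):k<\omega\}$ of all comparable pairs $a_k<b_k$ in $\mathbb{P}$ in which each pair appears infinitely often and pairs with larger intervals are processed with higher priority. At stage $k$, processing the pair $(p,q)=(a_k,b_k)$: if $c_{k-1}$ already realizes both colors on $[p,q]$, let $c_k=c_{k-1}$; otherwise pick any uncolored $r\in[p,q]$ and commit $c_k(r)$ to be the color opposite to the one already appearing on $[p,q]\cap\mathrm{dom}(c_{k-1})$, also committing $p$ at this stage if it was not yet colored. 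This ensures that in the final $c$, each processed interval contains both colors.

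For the verification, suppose toward contradiction that $p_0<p_1<\cdots$ is a bad sequence of color $i$. Then $[p_0,p_n]\subseteq c^{-1}(i)$ for every $n$ and $|[p_0,p_n]|\geq n+1\to\infty$ (the chain itself already has $n+1$ distinct elements in the interval). Since at most one element is committed per stage, the committed set after stage $k$ has at most $k$ elements; by the priority in the bookkeeping the pair $(p_0,p_n)$ is processed at some stage where $[p_0,p_n]$ still contains an uncolored element, and the construction then commits it to color $\neq i$, contradicting monochromaticity.

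The main obstacle is designing the bookkeeping so that for any such potential bad sequence, \emph{some} pair $(p_0,p_n)$ is processed in time — that is, before $[p_0,p_n]$ has been entirely colored with value $i$. A naive diagonal enumeration processes $(p_i,p_j)$ only after a quadratic number of stages, which may be too late. Overcoming this uses both the no-maximum hypothesis (so that $|[p_0,p_n]|$ indeed grows unboundedly along any chain) and the countability of $\mathbb{P}$ to arrange a priority bookkeeping that keeps pace with the intervals' growth — prioritizing, at each stage, pairs whose interval size would be the first to be overrun by the committed set.
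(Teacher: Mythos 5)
Your reformulation via the sets $R(p)$ is correct, and the general shape (a countable-stage construction that secures intervals one at a time) is in the right spirit, but the proposal does not contain a proof: the step you yourself flag as ``the main obstacle'' --- designing a bookkeeping under which, for every bad chain, some pair $(p_0,p_n)$ is processed while $[p_0,p_n]$ still has an uncolored element --- is exactly where the mathematical content lies, and the gesture toward ``prioritizing pairs whose interval size would be the first to be overrun'' does not resolve it. Moreover, the timing of the enumeration is not the only problem. Securing one interval $[p_0,p_n]$ by committing a single element $r$ to the minority color kills only those chains through $p_0$ that eventually go above $r$. A point $p_0$ may lie below continuum many pairwise diverging infinite chains, while your budget of one committed element per stage yields only countably many witnesses in total, shared among all points of $\mb P$; nothing in the proposal ensures that the witnesses placed above $p_0$ form a set that every chain through $p_0$ must pass above. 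So even granting an enumeration that always finds an uncolored element in time, the verification paragraph does not close: it implicitly treats ``one secured interval per chain'' as sufficient, which presupposes an enumeration of the (possibly uncountably many) chains.

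The paper's proof supplies precisely the two missing ideas. First, it maintains the invariant that the set colored by stage $n$ is a finite union of antichains; every infinite chain automatically escapes such a set, which replaces your counting argument and never fails. (Intervals that contain an infinite chain are enumerated separately and given both colors, since they always retain uncolored elements.) Second, to handle all chains through a given point $p$ simultaneously, it commits not one element but a whole antichain $B$: letting $R$ be the set of $q\geq p$ with $[p,q]$ containing no infinite chain, equipped with its rank function, $B$ consists of the rank-minimal not-yet-colored elements of the intervals $[p,q]$ for $q\in R$. This $B$ is an antichain (so the invariant is preserved) and meets $\bigcup_{k}[c_0,c_k]$ for every chain $C$ through $p$ all of whose intervals are well-founded. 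That simultaneous, rank-based unavoidability argument is the heart of the proof and is absent from your sketch.
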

\begin{proof} Fix a countable poset $\mb P$ without maximal elements. We construct a partition $\mb P=P_0\cup P_1$ witnessing $\mb P \nrightarrow (I_\omega)^1_2$ as follows: first, fix an enumeration $\{I_n:n\in\omega\}$ of all intervals $I=[p',p]$ in $\mb P$ which contain an infinite chain
and let $\mb P= \{p_n:n\in\omega\}$ denote a 1-1 enumeration. Construct disjoint $P_{0,n},P_{1,n}\subseteq \mb P$ by induction on $n\in \omega$ such that
\begin{enumerate}[(i)]
 \item $P_{i,n}$ is a finite union of antichains for $i<2$,
\item $p_n\in P_{0,n}\cup P_{1,n}$,
\item $I_n\cap P_{i,n}\neq\emptyset$ for $i<2$,
\item whenever $C=\{c_k:k\in\omega\}\subseteq \mb P$ is a strictly increasing chain, 
$p_n\in C$ and 
$[c_i,c_j]$ is well-founded (i.e.   $[c_i,c_j]\notin I$) for all $i<j<{\omega}$
then
$$\bigcup_{k\in\omega}[c_0,c_k] \cap P_{i,n}\neq \emptyset$$ for each $i<2$.
\end{enumerate}
Provided we can carry out this induction, we have that 
\begin{clm}
 $\mb P \nrightarrow (I_\omega)^1_2$.
\end{clm}
\begin{proof}
 Let $P_i=\cup \{P_{i,n}:n\in\oo\}$ for $i<2$ and note that this is a partition
of $\mb P$ by (ii). Consider an arbitrary strictly increasing chain
$C=\{c_k:k\in\omega\}\subseteq \mb P$. If there is $k\in \omega$ such that
$[c_0,c_k]$ contains an infinite chain in $\mb P$ then there is an $n\in \omega$
such that $I_n=[c_0,c_k]$; property (iii) from above ensures that $P_i\cap
[c_0,c_k]\neq \emptyset$ for $i<2$. Otherwise, 
the intervals $[c_i,c_j]$ are all well-founded
intervals; in this case, property (iv) ensures that
$\bigcup_{k\in\omega}[c_0,c_k] \cap P_{i}\neq \emptyset$ for $i<2$.
\end{proof}

Now suppose we constructed $P_{i,n-1}$ satisfying the above conditions for $i<2$; note that
finitely many elements can be added to both $P_{0,n-1}$ and $P_{1,n-1}$ without
violating (i), thus (ii) and (iii) are easy to satisfy (note that $I_n\setminus(P_{0,n-1}\cup P_{1,n-1})$ is infinite since $I_n$ contains an infinite chain).

It suffices to show the following to finish our proof:

\begin{clm}
 Fix $p\in \mb P$ and $A\subseteq \mb P$ which is covered by finitely many
antichains. Then there is an antichain $B\subseteq \mb P\setminus A$ such that
whenever $C=\{c_k:k\in\omega\}\subseteq \mb P$ is a strictly increasing chain, $p\in C$ and 
the intervals $[c_i,c_j]$ are all well-founded
 then $$\bigcup_{k\in\omega}[c_0,c_k]
\cap B\neq \emptyset.$$
\end{clm}

\begin{proof}
Let 
\begin{equation}\notag
R=\{q\in P:p\le q\text{ and $[p,q]$  does not contain infinite chains}\}. 
\end{equation}
Then $\<R,\le\>$ is well founded, so we can define, by well-founded recursion, 
a rank function  $rk$ from $R$ into the ordinals such that
\begin{equation}\label{eq:rk}
\begin{array}{ll}
rk(p)=0,\\  rk(t)=\sup\{rk(s)+1:s\in [p,t)\}  & \text{ if  $t\in R$, $p<t$. }
\end{array}
\end{equation}

 Let $Q=R\setm A$ and define $q^-$
to be the element minimizing $rk$ on $[p,q]\setminus A$ for $q\in Q$.
Let $$B=\{q^-:q\in Q\}.$$ 

First note that $B$ is an antichain by (\ref{eq:rk}). Now fix a strictly
increasing chain $C=\{c_k:k\in\omega\}\subseteq P$ 
such that the intervals $[c_i,c_j]$ are all well-founded
and  $p\in C$; 
since $A$  is covered by finitely many
antichains  there is $q\in C\setminus A$ such that $p<q$;
also, $q\in Q$ by $[p, q]$ being well founded. Thus $q^-\in
\bigcup_{k\in\omega}[c_0,c_k] \cap B$.
\end{proof}

Indeed, to finish the inductive construction, apply the claim twice 
to find antichain $B_0\subseteq \mb P\setminus A$ and 
$B_1\subseteq \mb P\setminus (A\cup B_0)$ such that
$\bigcup_{k\in\omega}[c_0,c_k]
\cap B_i\neq \emptyset$ whenever $C=\{c_k:k\in\omega\}\subseteq \mb P$ is a 
strictly increasing chain,  $p\in C$ and 
the intervals $[c_i,c_j]$ are all well-founded.

Then  $P_{0,n}=P_{0,n-1}\cup B_0$ and 
$P_{1,n}=P_{1,n-1}\cup B_1$ are appropriate extensions satisfying (iv).
\end{proof}

 We will call a countable strictly increasing sequence of elements of a poset $\mb P$ a
\emph{branch}; we say that a branch $x=(x_n)_{n\in\oo}$ goes above an element
$p\in \mb P$ iff $p\leq x_n$ for some $n\in\omega$.

\begin{thm}\label{part}
 There is a partial order $\mb P$ of size $\omega_1$ without maximal elements
such that $\mb P \to (I_\omega)^1_2$. Furthermore, 
\begin{enumerate}[(1)]
\item every $p\in\mb P$ has finitely many predecessors,
\item if $p\nleq q$ in $\mb P$ then there is a branch $x$ in $\mb P$ which goes
above $q$ but not $p$.
\end{enumerate}
\end{thm}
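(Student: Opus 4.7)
The plan is to construct $\mb P$ by transfinite recursion of length $\omega_1$, producing an increasing union $\mb P = \bigcup_{\alpha < \omega_1} \mb P_\alpha$ of countable posets. Each successor stage $\alpha+1$ introduces countably many new elements; when we add a new $p$ we specify by fiat a finite subset of $\mb P_\alpha$ to serve as its immediate predecessors, which (together with an induction on rank) keeps property (1) intact at every stage and in the union. We also include, for each $p \in \mb P_\alpha$, a fresh element placed strictly above $p$, so no element becomes maximal. Property (2) is then secured via standard bookkeeping: enumerate along the construction all pairs $(p,q)$ with $p \not\leq q$, and at the corresponding stage add a fresh $\omega$-chain above $q$ whose predecessor-sets are arranged to omit every upper bound of $p$ that has appeared so far. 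Because property (1) forces finite predecessor-sets, this avoidance can always be arranged with only finitely many constraints to respect at each step.

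The hard part is the partition relation. My approach is to aim for the strictly stronger structural statement: \emph{every uncountable $X \subseteq \mb P$ contains an $I_\omega$}, i.e.\ a chain $p_0 < p_1 < \cdots$ with $[p_0,p_n] \subseteq X$ for every $n$. This is enough, because any $2$-partition of $\mb P$ has an uncountable colour class by regularity of $\omega_1$. Note that this structural property forces $\mb P$ to have no uncountable antichain, which is a built-in consistency check on the construction: every element we add must provide not just an isolated node but the top of many finite intervals reaching deep into $\mb P_\alpha$, binding the new element tightly to the past.

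To arrange the structural property I would, at stage $\alpha$, prepare for each $p \in \mb P_\alpha$ a rich supply of candidate $\omega$-chains above $p$ whose finite intervals $[p,\cdot]$ are fully prescribed from existing material. Then, given an uncountable $X \subseteq \mb P$, an elementary-submodel/reflection argument along a club of $\alpha < \omega_1$ should locate a stage where the trace $X \cap \mb P_\alpha$ is ``generic enough'' that one of the prepared upward chains at some $p_0 \in X \cap \mb P_\alpha$ is realised inside $X$, with all its controlled intervals inside $X$. A natural engine for producing such a coherent family of chains is a $\rhop$-function coming from a walk on $\omega_1$; used correctly, it yields a sequence of $\omega$-branches indexed by $\omega_1$ whose interval data is uniformly controlled and which any uncountable set must meet cofinally.

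The main obstacle is interference between the three jobs: the chains added to realise property (2) must avoid specified upper cones, and at the same time these very chains contribute to the supply of $I_\omega$-witnesses used for the partition relation. One therefore has to design the bookkeeping so that the avoidance clauses for (2) cut out only finitely (or at worst countably) many candidate witnesses at each step, leaving unboundedly many chains in each upward cone to serve the Ramsey property. Getting the combinatorics of this interleaving right---and then verifying that the reflection argument delivers a true $I_\omega$ inside an arbitrary uncountable $X$, rather than just an infinite chain without the interval condition---is where the real work of the proof lies; the preceding proposition shows the countable case is hopeless, so the argument must make essential use of the uncountability of $\omega_1$.
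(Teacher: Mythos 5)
Your overall architecture (a length-$\omega_1$ recursion, bookkeeping for property (2), fresh points above everything to kill maximal elements) matches the paper's, but the core of your argument for $\mb P \to (I_\omega)^1_2$ rests on a reduction that is provably impossible. You propose to show that \emph{every} uncountable $X\subseteq \mb P$ contains an $I_\omega$ and then invoke the fact that some colour class is uncountable. However, property (1) already rules this out: if every $p$ has only finitely many predecessors, then $\mathrm{rk}(p)=\max\{|C| : C\subseteq\{q:q\le p\}\text{ a chain}\}$ is a finite-valued function with $\mathrm{rk}(p)<\mathrm{rk}(q)$ whenever $p<q$, so $\mb P$ is the union of the countably many antichains $\mathrm{rk}^{-1}(n)$, one of which must be uncountable. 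An uncountable antichain contains no infinite chain, let alone an $I_\omega$, so your target structural statement fails for every poset satisfying (1), and "pick the uncountable colour class" cannot work: the partition into an uncountable antichain and its complement forces the witness to live in the \emph{other} class. You acknowledge that your statement implies there are no uncountable antichains and call it a "consistency check," but it is in fact an inconsistency with (1). Any correct proof must genuinely use the interaction between the two colour classes rather than the uncountability of one of them.

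The paper's argument does exactly that. It builds $\mb P$ on $\omega_1\times\omega$ using a colouring $c:[\omega_1]^2\to\omega$ with each $c(\cdot,\zeta)$ injective, requiring $(\xi,n)<(\zeta,m)\Rightarrow \xi<\zeta$ and $\max(n,c(\xi,\zeta))<m$, and at stage $\gamma$ adds a point $t_\gamma$ that is a \emph{minimal} common upper bound of a prescribed pair $(y_\gamma,w_\gamma)$ (every $t<t_\gamma$ lies below $y_\gamma$ or below $w_\gamma$). Given a partition $D_0\cup D_1$ with no homogeneous $I_\omega$, one extracts, for each $\alpha$, a four-element chain carrying a $0$-maximal interval $[\tilde x_\alpha,\tilde y_\alpha]$ stacked under a $1$-maximal interval $[\tilde z_\alpha,\tilde w_\alpha]$; the unboundedness property of $c$ produces $\alpha<\beta$ with $\tilde x_\alpha\nleq\tilde w_\beta$, and then $t_\gamma$ for the pair $(\tilde y_\alpha,\tilde w_\beta)$ extends both maximal intervals by exactly one point, contradicting maximality in whichever $D_i$ it falls. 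If you want to salvage your write-up, you should replace the "every uncountable set reflects" engine with an amalgamation device of this kind that pits the two colours against each other.
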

\begin{proof}
Let us fix a function $c:[\oo_1]^2\to \omega$ such that $c(\cdot,\zeta):\zeta\to
\omega$ is 1-1 for every $\zeta\in\oo_1$. It is easy to see that such functions
satisfy the following:

\begin{fact}\label{unbound}If  $c(\cdot,\zeta):\zeta\to \omega$ is 1-1 for every
$\zeta\in\oo_1$ for some $c:[\oo_1]^2\to \omega$ then for every uncountable,
disjoint family $\mc A \subseteq [\oo_1]^{<\oo}$ and $N\in \oo$ there are
$a<b$\ \footnote[1]{$a<b$ iff $\xi<\zeta$ for all $\xi\in a,\zeta\in b$} in $\mc
A$ such that $c(\xi,\zeta)>N$ for every $\xi\in a,\zeta\in b$.
\end{fact}

 Also, fix an enumeration $\{(y_\alpha,w_\alpha):\oo\leq\alpha<\oo_1\}$ of all
pairs of elements of $\oo_1\times \oo$ such that $y_\alpha, w_\alpha\in
\alpha\times \omega$. 

We define $\mb P=(\oo_1\times \oo, \leq)$ as follows: by induction on $\alpha\in
L_1$ (where $L_1$ stands for the limit ordinals in $\oo_1$) we construct a poset
$\mb P_\alpha=((\alpha+\oo)\times \oo,\leq_\alpha)$ with properties:
\begin{enumerate}[(i)]
\item $\mb P_\alpha$ has no maximal elements and every $p\in \mb P_\alpha$ has
finitely many predecessors,
\item $\leq_\alpha\upharpoonright\beta=\leq_\beta$ for all $\beta<\alpha$,
 \item $(\xi,n)<_\alpha(\zeta,m)$ implies that $\xi<\zeta$ and
$\max(n,c(\xi,\zeta))<m$,
\item there is a $t_\alpha\in\mb P_\alpha$ such that $t<_\alpha t_\alpha$ 
if and only if $t\leq_\alpha y_\alpha$ or $t\leq_\alpha w_\alpha$ for any $t\in \mb P_\alpha$,
\item if $p\nleq q$ in $\mb P_\alpha$ then there is a branch $x$ in $\mb
P_\alpha$ which goes above $q$ but not $p$.
\end{enumerate}
We only sketch the inductive step: suppose that $y_\alpha=(\xi, n)$ and
$w_\alpha=(\zeta, m)$.  Let $\Gamma=\{\nu<\omega_1:$ there is $s\leq y_\alpha$ or $s\leq
w_\alpha$ with $s=(\nu, l)$ for some $l\in\oo\}$ and note that $|\Gamma|<\omega$ by (i). Let $$k=\max\{n,m,c(\nu,\alpha):\nu\in\Gamma\}+1.$$ Now
define $t_\alpha=(\alpha,k)$ and $\leq_\alpha$ so that  $t<_\alpha t_\alpha$
implies that $t\leq_\alpha y_\alpha$ or $t\leq_\alpha w_\alpha$. Extend
$\leq_\alpha$ further so that $\mb P_\alpha$ has no maximal elements and
satisfies (v); this can be done by "placing" copies of $2^{<\oo}$ above elements
of $\mb P_\alpha\setm \cup\{\mb P_\beta:\beta<\alpha\}$.

Let us define $\mb P=\cup \{\mb P_\alpha:\alpha<\oo_1\}$ and $\leq=\cup
\{\leq_\alpha:\alpha<\oo_1\}$; observe that $(\mb P,\leq)$ is well defined and
trivially satisfies (1) and (2). In what follows, $\pi_{\oo_1}$ and $\pi_{\oo}$
denotes the projections from $\omega_1\times \oo$ to the first and second
coordinates respectively.
\begin{clm}
 $\mb P \to (I_\omega)^1_2$.
\end{clm}
\begin{proof}
Suppose that $\mb P=D_0\cup D_1$; we can assume that $D_0$ and $D_1$ are both
cofinal in $\mb P$. Now suppose that there is no increasing chain with each interval in one
of the $D_i$ and reach a contradiction as follows. We will say that an interval
$[s,t]$ in $\mb P$ is \emph{i-maximal} for some $i<2$ if $[s,t]\subseteq D_i$
but $[s,t']\nsubseteq D_i$ for every $t<t'\in \mb P$. Observe that for every $s\in D_i$
there is $t\in D_i$ such that $[s,t]$ is i-maximal; otherwise, we can construct
an increasing chain starting from $s$ with each interval in $D_i$. Now construct
increasing 4-element sequences $R_\alpha=\{\tilde x_\alpha, \tilde y_\alpha
, \tilde z_\alpha, \tilde w_\alpha\}\subseteq \mb P$ for $\alpha<\oo_1$
such that $\tilde x_\alpha\leq \tilde y_\alpha
\leq \tilde z_\alpha \leq \tilde w_\alpha$ and
\begin{enumerate}[(a)]
 \item  $[\tilde x_\alpha,\tilde y_\alpha]\subseteq \mb P_0$ is a 0-maximal
interval,
\item $[\tilde z_\alpha,\tilde w_\alpha]\subseteq \mb P_1$ is a 1-maximal
interval,
\item $\pi_{\oo_1}''R_\alpha < \pi_{\oo_1}'' R_\beta$ if $\alpha<\beta$.
\end{enumerate}
By passing to a subsequence of $\{R_\alpha:\alpha<\oo_1\}$ we can suppose that
the image of $(\tilde x_\alpha, \tilde y_\alpha
, \tilde z_\alpha, \tilde w_\alpha)$ under $\pi_\omega$ is independent of 
$\alpha<\omega_1$ and we let $N=\max \pi_\omega''
R_\alpha$. Find $\alpha<\beta$, using Fact \ref{unbound}, such that 
\begin{equation*}
c\upharpoonright [\pi_{\oo_1}''R_\alpha, \pi_{\oo_1}'' R_\beta]> N.
   \end{equation*}
 Observe that $\tilde x_\alpha\nleq \tilde w_\beta$ by $\pi_\oo'' w_\beta= N<
c(\pi_{\oo_1}''\tilde x_\alpha,\pi_{\omega_1}''\tilde w_\beta)$ and (iii). Now find
$\gamma<\oo_1$ such that $(y_\gamma,w_\gamma)=(\tilde y_\alpha,\tilde w_\beta)$
and consider $t_\gamma\in \mb P_\gamma$. We claim that $t_\gamma$ is a minimal
extension of $\tilde y_\alpha$ and $\tilde w_\beta$ in the following sense:
\begin{enumerate}[(1)]
 \item $[\tilde x_\alpha,t_\gamma]=[\tilde x_\alpha,\tilde y_\alpha]\cup
\{t_\gamma\}$,
\item $[\tilde z_\beta,t_\gamma]=[\tilde z_\beta,\tilde w_\beta]\cup
\{t_\gamma\}$.
\end{enumerate}
Indeed, if $\tilde x_\alpha\leq t'<t_\gamma$ then $t'\leq \tilde y_\alpha$ or
$t'\leq \tilde w_\beta$; $\tilde x_\alpha\nleq \tilde w_\beta$ implies that
$t'\nleq w_\beta$ hence $t'\in [\tilde x_\alpha,\tilde y_\alpha]$. Similarly, if
$\tilde z_\beta\leq t'<t_\gamma$ then $t'\leq \tilde y_\alpha$ or $t'\leq \tilde
w_\beta$; however, $t'\nleq \tilde y_\alpha$ by $\pi_\omega'' t' > \pi_\omega''
\tilde y_\alpha$ so $t'\in[\tilde z_\beta,\tilde w_\beta]$.

Note that $t\in \mb P_0$ contradicts the 0-maximality of $[\tilde
x_\alpha,\tilde y_\alpha]$ and (1) while $t\in \mb P_1$ contradicts the
1-maximality  of $[\tilde z_\beta,\tilde w_\beta]$ and (2).
\end{proof}
The above claim finishes the proof.
\end{proof}

Using the previous theorem, we construct an irresolvable self-filling family; we
can actually realize this family as a system of open sets in a first countable
compact space. We remark that this space is base resolvable, as every compact
space, by Corollary \ref{Lindres}.

\begin{thm}\label{corson}
 There is a first countable Corson compact space $(X,\tau)$ and $\mc U\subseteq
\tau$ such that $\mc U$ fills $\{\cap \mc V: \mc V\in [\mc U]^{<\oo}\}$ and $\mc
U$ is irresolvable.
\end{thm}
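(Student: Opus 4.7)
The plan is to realise the family $\mc U$ as a natural clopen basis associated with the poset $\mb P$ of Theorem \ref{part}, sitting inside a compact subspace $X\subseteq 2^{\mb P}$. Using property (1) of $\mb P$, every branch $z=(z_n)_{n<\omega}$ has countable downward closure $\downarrow z=\bigcup_n \downarrow z_n\subseteq \mb P$, so each characteristic function $\chi_{\downarrow z}$ lies in the $\Sigma$-product $\Sigma(2^{\mb P})$. I would take $X$ to be the closure in $2^{\mb P}$ of a carefully chosen dense set built from such characteristic functions, put $U_p=\{F\in X:p\in F\}$ for each $p\in \mb P$, and define $\mc U=\{U_p:p\in \mb P\}$. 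Each $U_p$ is then clopen in $X$, and by property (2) of $\mb P$ the map $p\mapsto U_p$ is an order-reversing embedding of $\mb P$ into $(\mc U,\subseteq)$: whenever $p\nleq q$ a witness branch $y$ produces $\chi_{\downarrow y}\in U_q\setminus U_p$.

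Assuming that $X$ is first countable Corson compact and that every $F\in X$ is a countable, directed, downward closed subset of $\mb P$, the filling and irresolvability parts are straightforward. For filling $\{\cap \mc V:\mc V\in [\mc U]^{<\omega}\}$: given $W=U_{p_1}\cap\dots\cap U_{p_n}$ and $F\in W$, use directedness of $F$ to pick $q\in F$ strictly above every $p_i$; downward closedness of elements of $X$ yields $U_q\subseteq W$, and property (2) of $\mb P$ produces a branch $y$ above each $p_i$ but not above $q$, placing $\chi_{\downarrow y}\in W\setminus U_q$, so $F\in U_q\subsetneq W$. For irresolvability: given a partition $\mc U=\mc U_0\cup \mc U_1$, let $D_i=\{p:U_p\in \mc U_i\}$ and apply $\mb P\to(I_\omega)^1_2$ to obtain $i<2$ and a strictly increasing $(p_n)\subseteq D_i$ with $[p_0,p_n]\subseteq D_i$ for every $n$. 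The point $F_z=\chi_{\downarrow z}$ for the branch $z=(p_n)$ lies in $U_{p_0}$, and any $U_q$ with $F_z\in U_q\subseteq U_{p_0}$ forces $q\in \downarrow z$ (so $q\leq p_m$ for some $m$) and $q\geq p_0$ (by property (2) of $\mb P$), hence $q\in [p_0,p_m]\subseteq D_i$ and $U_q\notin \mc U_{1-i}$; thus $\mc U_{1-i}$ fails to fill $U_{p_0}$ at $F_z$.

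The principal obstacle will be choosing the dense set and verifying three structural properties of $X$: Corson compactness (every $F\in X$ has countable support), first countability (every $F\in X$ admits a countable local base, exploiting countability of $\supp(F)$ together with chain constraints of $\mb P$, in particular that incomparable pairs cannot appear together in any member of $X$), and local directedness (every $F\in X$ is directed, so finite subsets admit common upper bounds inside $F$). Since $\mb P$ is itself directed via the $t_\alpha$ construction used in the proof of Theorem \ref{part}, naive closures tend to introduce degenerate limits such as $\chi_{\mb P}$ with uncountable support, or non-directed downward closed sets at which the filling would fail. Overcoming this will likely require a careful choice of dense set (perhaps restricting to $\chi_{\downarrow z}$'s for branches $z$ cohering with a fixed system of ``guiding'' chains), and possibly a slight strengthening of $\mb P$ controlling which finite subsets admit common upper bounds, all while preserving the crucial partition relation $\mb P\to(I_\omega)^1_2$.
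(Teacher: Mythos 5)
Your overall architecture is the right one, and your filling and irresolvability arguments are essentially sound \emph{conditional} on the structural properties of $X$ you list at the end. But that is exactly where the gap is: the theorem's real content is the construction of the first countable Corson compact space itself, and your proposal defers it, conceding that the naive closure of your dense set produces degenerate limits (non-directed sets, points of uncountable support) and that repairing this ``will likely require'' an unspecified choice of dense set or even a strengthening of $\mb P$. Both the filling step (which needs every $F\in X$ to be directed with no maximal element) and the first countability are left unverified, so the proof does not close.

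The paper's resolution is to code points differently: $X$ is the set of \emph{maximal chains} of $\mb P$ --- infinite chains $x$ with $x(0)$ minimal and $[x(n),x(n+1)]=\{x(n),x(n+1)\}$ --- viewed as elements of $2^{\mb P}$ via their own characteristic functions rather than those of their downward closures, with $V_p=\{x\in X:\exists n\ x(n)\geq p\}$. This sidesteps every difficulty you raise: no directedness or downward-closedness of points is needed (a maximal chain lying in $\bigcap_{i}V_{p_i}$ eventually has a single element above all the $p_i$, which gives the filling); membership in $\Sigma(2^{\mb P})$ is automatic because every chain of $\mb P$ is countable (the order forces second coordinates to increase strictly); $X$ is closed because a failure to be a maximal chain --- an incomparable pair, a non-minimal first element, or a gap in an interval --- is witnessed by a \emph{finite} condition, using that predecessor sets and intervals in $\mb P$ are finite; and first countability follows since each $x$ is the intersection $\bigcap_n[\chi_{x(n)}]\cap X$, a countable pseudobase in a compact space. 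The degenerate limits you worry about (e.g.\ $\chi_{\mb P}$) simply cannot occur in this coding, and no modification of $\mb P$ is needed. The irresolvability argument then runs as you describe, except one must first extend the homogeneous branch supplied by $\mb P\to(I_\omega)^1_2$ to a maximal chain and pass to an end-segment to keep the interval homogeneity.
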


\begin{proof} 
 Consider the poset $\mb P$ in Theorem \ref{part}. We say that $x\in [\mb
P]^\omega$ is a \emph{maximal chain} iff $\{x(n)\}_{n\in\oo}$ is a branch in $\mb
P$, $x(0)$ is a minimal element of $\mb P$ and $[x(n),x(n+1)]=\{x(n),x(n+1)\}$.
Note that there are no increasing chains of order type $\oo+1$ in $\mb P$.
Furthermore, since the intervals are finite   
\begin{obs}\label{maxchain}
 \begin{enumerate}
  \item Any branch $y\in [\mb P]^\oo$ can be extended to a maximal chain $\bar
y\in [\mb P]^\oo$,
\item there is an $n_0\in\oo$ such that $\cup_{n_0\leq n} [\bar y(n_0),\bar
y(n)]\subseteq\cup_{n\in\omega}[y(0),y(n)]$. 
 \end{enumerate}
\end{obs}
Note that (2) implies that if $y\in [\mb P]^\oo$ has homogeneous intervals with
respect to some coloring of $\mb P$ then the an end-segment of the maximal
extension $\bar y$ has the same property. 

Now consider $X=\{x\in [\mb P]^\oo: x \text{ is a maximal chain}\}$ as a subspace
of $2^{\mb P}$; here $2^{\mb P}$ is equipped with the usual product topology.

\begin{clm}
 $X$ is a compact subspace of $\Sigma(2^{\mb P})=\Sigma(2^{\oo_1})$.
\end{clm}
\begin{proof} $\Sigma(2^{\mb P})=\Sigma(2^{\oo_1})$ follows from $|\mb P|=\oo_1$
and clearly every chain is countable so $X\subseteq \Sigma(2^{\mb P})$.

We prove that $X$ is a closed subset of $2^{\mb P}$. Suppose that $y\in 2^{\mb
P}\setm X$; clearly, if $y$ is not a chain then $y$ can be separated from $X$.
Suppose that $y$ is a chain, then either $y(0)$ is not minimal in $\mb P$ or
there is $n\in\oo$ such that $[y(n),y(n+1)]\neq\{y(n),y(n+1)\}$. In the first case let
$\varepsilon\in Fn(\mb P,2)$ be defined to be 1 on $y(0)$ and $\varepsilon(p)=0$
for $p< y(0)$, $p\in \mb P$ (note that each element in $\mb P$ has only finitely
many predecessors); then $y\in [\varepsilon]$ and $[\varepsilon]\cap X=\empt$.
In the second case let $\varepsilon\in Fn(\mb P,2)$ such that
$1=\varepsilon(y(n))=\varepsilon(y(n+1))$ and $\varepsilon\uhr [y(n),y(n+1)]\setm\{y(n),y(n+1)\}=0$;
then $y\in [\varepsilon]$ and $[\varepsilon]\cap X=\empt$.
\end{proof} 

\begin{clm}
 $\{x\}=\cap\{[\chi_{x(n)}]\cap X:n\in\oo\}$ for every $x\in X$. Hence every
point in $X$ has countable pseudocharacter; in particular, $X$ is first
countable.
\end{clm}
\begin{proof}
Suppose that $y\in\cap\{[\chi_{x(n)}]\cap X:n\in\oo\}$, that is
$\{x(n):n\in\oo\}\subs \{y(n):n\in\oo\}$. We prove that $x(n)=y(n)$ by induction
on $n\in\oo$. $y(0)=x(0)$ as they are comparable minimal elements in $\mb P$. Suppose
that $x(i)=y(i)$ for $i< n$; if $x(n)\neq y(n)$ then $x(n)=y(k)$ for some $n<k$,
thus $y(n)\in [x(n-1),x(n)]=[y(n-1),y(k)]$ which contradicts the maximality $x$.
\end{proof}

Now define $$V_p=\{x\in X: \exists n\in\oo:x(n)\geq p\} \text{ for } p\in \mb
P,$$
and note that $V_p$ is open since $V_p=\cup\{[\chi_{\{q\}}]\cap X: p\leq q\}$.
We define $$\mc U=\{V_p:p\in \mb P\}.$$

\begin{clm}
 $\mc U$ fills $\{\cap \mc V: \mc V\in [\mc U]^{<\oo}\}$ and $\mc U$ is irresolvable.
\end{clm}

\begin{proof}
Note that $p< q$ in $\mb P$ if and only if $V_q\subsetneq V_p$; the nontrivial
direction is implied by property (2) of $\mb P$ in Theorem \ref{part}. 
To see that $\mc U$ fills the finite intersections from $\mc U$
let $\mc V\in [\mc U]^{<\oo}$ be arbitrary. If 
$A=\{p\in \mb P: V_p\in \mc V\}\in [\mb P]^{<\oo}$ then 
\begin{equation}\notag
\bigcap\mc V=\bigcup\{V_q: \text{ $p<q$ for all $p\in A$ }\}. 
\end{equation}

We show that $\mc U$ is irresolvable; suppose that we partitioned $\mc U$,
equivalently $\mb P$ into two parts $\mb P_0,\mb P_1$. Applying $\mb P \to
(I_\omega)^1_2$ we that there is a chain $y\in \mb P^\oo$ and $i<2$ such that
$[y(0),y(n)]\subseteq \mb P_i$ for every $n\in\omega$. By
Observation \ref{maxchain} there is maximal chain $\bar y\in X$ such that $[\bar y(n_0),\bar
y(n)]\subseteq \mb P_i$ for some $n_0\in\oo$ and every $n\geq n_0$. We claim
that there is no $V\in \{V_p:p\in \mb P_{1-i}\}$ such that $\bar y\in V\subseteq
V_{\bar y(n_0)}$. Indeed, if $\bar y\in V_p\subseteq V_{\bar y(n_0)}$ for some
$p\in \mb P$ then $\bar y(n_0)\leq p$ and there is $n\in\oo\setm n_0$ such that
$p\leq \bar y(n)$; that is $p\in [\bar y(n_0),\bar y(n)]\subseteq \mb P_i$. 
\end{proof}
The last claim finishes the proof of the theorem.
\end{proof}

Let us finish this section with the following:

\begin{lm}
 If $\mc U$ fills $\{\cap \mc V: \mc V\in [\mc
U]^{<\oo}\}$ and $\mc U$ is irresolvable then there is a non base resolvable, $T_0$ topological
space.
\end{lm}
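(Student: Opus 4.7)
The plan is to turn $\mc U$ itself into a base for the desired space by first topologizing $Y := \bigcup \mc U$ and then passing to the $T_0$-quotient. Concretely, I would let $\tau$ be the topology on $Y$ generated by $\mc U$, so that $\mc B := \{\bigcap \mc V : \mc V \in [\mc U]^{<\oo}\}$ (with $Y = \bigcap \empt$) is a base of $\tau$; the filling hypothesis then ensures each $W \in \mc B$ is a union of elements of $\mc U$, making $\mc U$ itself a base. Since the filling condition forbids any nonempty basic open set from being a singleton, $(Y, \tau)$ is already dense-in-itself.

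To obtain $T_0$, I would define $y \sim y'$ iff $y \in U \Leftrightarrow y' \in U$ for every $U \in \mc U$, set $X := Y/{\sim}$ with the quotient topology, and let $\pi$ be the quotient map. Every $U \in \mc U$ is $\sim$-saturated, so $\pi$ induces a bijection $U \mapsto \pi(U)$ between $\mc U$ and $\mc U' := \{\pi(U) : U \in \mc U\}$ preserving proper inclusions, arbitrary unions and finite intersections. In particular, $\mc U'$ is a base of $X$, $X$ is $T_0$ by construction, and $\mc U'$ still fills all its finite intersections.

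The main obstacle is verifying that $X$ remains dense-in-itself, i.e.\ that no basic open $W$ of $Y$ sits inside a single $\sim$-class. For this I would use the filling condition: the equality $W = \bigcup\{V \in \mc U : V \subsetneq W\}$ cannot be witnessed by a single $V$ (which would force $V = W$), so there are distinct $V_1, V_2 \in \mc U$ with $V_i \subsetneq W$; any point of the nonempty symmetric difference $V_1 \triangle V_2$, paired with a point of the appropriate other $V_i$, provides two $\sim$-inequivalent points of $W$, a contradiction.

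Finally, I would show $\mc U'$ is not resolvable as a base. Suppose for contradiction $\mc U' = \mc U'_0 \sqcup \mc U'_1$ is a partition into two bases. Given $U' \in \mc U'$ and $x \in U'$, the self-filling of $\mc U'$ yields $V' \in \mc U'$ with $x \in V' \subsetneq U'$, and then the fact that $\mc U'_i$ is a base yields $W' \in \mc U'_i$ with $x \in W' \subseteq V' \subsetneq U'$; hence $\mc U'_i$ fills $\mc U'$ for each $i<2$. Transporting this partition back through the bijection $\mc U' \leftrightarrow \mc U$ gives a witness to the resolvability of $\mc U$, contradicting the hypothesis.
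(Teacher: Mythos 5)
Your proposal is correct and follows essentially the same route as the paper: pass to the Kolmogorov quotient of the topology generated by $\mc U$, use that $U\mapsto [U]$ is an inclusion- and union-preserving bijection, and pull a hypothetical partition into two bases back to a resolution of $\mc U$. Your explicit check that the quotient is dense-in-itself (which the paper leaves implicit, though it is needed for "base resolvable" to fail non-trivially under the paper's conventions) is a welcome addition but does not change the argument.
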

\begin{proof} Suppose that $\mc U\subs \mc P(X)$ is as above. Define a relation $\sim$ on $X$ by $x\sim
y$ iff $\{U\in \mc U:x\in U\}=\{U\in \mc U:y\in U\}$; clearly, $\sim$ is an
equivalence relation on $X$. Let $[x]=\{x'\in X:x\sim x'\}$ for $x\in X$ and
let $[U]=\{[x]:x\in U\}$ for any $U\subs X$. It is clear that $[U]=\cup \{[V]:V\in \mc V\}$ if $U=\cup \mc V$ and $[U]=\cap \{[V]:V\in \mc V\}$ if $U=\cap \mc V$. Thus $\bs=\{[U]:U\in \mc U\}$ is a base for
a $T_0$ topology on $[X]$; sometimes this is referred to as the Kolmogorov quotient of the original (not necessarily $T_0$) topology generated by $\mc U$.

 It remains to show that $\bs$ is an irresolvable base. Take a partition $\bs=\bs_0\cup \bs_1$. Note that 
\begin{enumerate}[(1)]
\item $[x]\in [U]$ iff $x\in U$,
\item $[U]=[V]$ iff $U=V$,
\item $[U]\subs [V]$ iff $U\subs V$ 

\end{enumerate}
  for any $U,V\in \mc U$; thus the partition $\bs_0\cup \bs_1$ gives a partition $ \mc U_i=\{U\in \mc U: [U]\in \bs _i\}$ of $\mc U$. Now there is an $i<2$ so that $\mc U_i$ does not fill $\mc U$ i.e. there is $x\in X$ and $V\in \mc U$ so that $x\in U$ implies $U\setm V\neq \emptyset$ for all $U\in \mc U_i$. This gives that  $[x]\in [U]$ implies $[U]\setm [V]\neq \emptyset$ for all $[U]\in \bs _i$; in particular, $\bs _i$ is not a base for the topology generated by $\bs$.
\end{proof}

In particular, we have the following

\begin{cor}\label{T0irres}There is a non base resolvable, $T_0$ topological
space.
\end{cor}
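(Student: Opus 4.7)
The plan is to construct the desired $T_0$ space as a Kolmogorov quotient of the topology generated by $\mc U$. Write $\mc U \subseteq \mc P(X)$ and define $x \sim y$ iff $\{U\in\mc U:x\in U\}=\{U\in\mc U:y\in U\}$. This is an equivalence relation; let $[X]=X/\mathord\sim$ and $[U]=\{[x]:x\in U\}$ for $U\subseteq X$.

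The first step is to check three bookkeeping properties of the map $U\mapsto [U]$ restricted to $\mc U$: (1) $[x]\in [U]\iff x\in U$, which is immediate because if $x\sim x'$ and $x'\in U$ then $x$ and $x'$ lie in exactly the same members of $\mc U$, so $x\in U$; (2) $[U]=[V]\iff U=V$, since if $x\in U\setminus V$ then by (1) we have $[x]\in [U]\setminus [V]$; (3) $[U]\subseteq [V]\iff U\subseteq V$, by the same argument. Together these show that $U\mapsto [U]$ is an inclusion-preserving bijection from $\mc U$ onto $\mb B=\{[U]:U\in\mc U\}$. One also verifies, using (1), that $[U_1\cup U_2]=[U_1]\cup [U_2]$ and $[U_1\cap U_2]=[U_1]\cap [U_2]$ for members of $\mc U$.

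Next I would show that $\mb B$ is a base for a $T_0$ topology on $[X]$. The $T_0$ property is built into the quotient: distinct classes $[x]\ne [y]$ are separated by some member of $\mc U$, hence by the corresponding member of $\mb B$. For the base property, given $[U_1],[U_2]\in \mb B$ and $[x]\in [U_1]\cap [U_2]=[U_1\cap U_2]$, I lift to $x\in U_1\cap U_2$ and apply the hypothesis that $\mc U$ fills the finite intersections $\{\bigcap\mc V:\mc V\in[\mc U]^{<\omega}\}$ to obtain $W\in\mc U$ with $x\in W\subseteq U_1\cap U_2$; then $[x]\in [W]\subseteq [U_1]\cap [U_2]$ by (1) and (3).

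Finally I transfer irresolvability. Suppose toward a contradiction that $\mb B=\mb B_0\cup\mb B_1$ is a partition into two bases. Using the bijection, pull this back to a partition $\mc U=\mc U_0\cup \mc U_1$ where $\mc U_i=\{U\in\mc U:[U]\in\mb B_i\}$. Because each $\mb B_i$ is a base, every $[U]\in\mb B$ is a union $[U]=\bigcup\{[V]:[V]\in\mb B_i,\ [V]\subseteq [U]\}$; by properties (1) and (3) this translates back to $U=\bigcup\{V\in\mc U_i:V\subseteq U\}$, so $\mc U_i$ fills $\mc U$ for each $i<2$, contradicting the irresolvability of $\mc U$.

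The main obstacle is the care required in the third step: one must justify the base property using the given filling hypothesis on finite intersections rather than on single sets, and for that step the equality $[U_1\cap U_2]=[U_1]\cap[U_2]$ (which rests on (1)) is essential. Everything else is routine transfer along the bijection $U\leftrightarrow [U]$.
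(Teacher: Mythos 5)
Your construction is exactly the paper's: pass to the Kolmogorov quotient of the topology generated by the irresolvable family $\mc U$ supplied by Theorem \ref{corson}, establish the bookkeeping properties (1)--(3), and transfer irresolvability back and forth along $U\leftrightarrow [U]$. Your verification that $\{[U]:U\in \mc U\}$ is a base, via $[U_1\cap U_2]=[U_1]\cap [U_2]$ and the hypothesis that $\mc U$ fills its finite intersections, is in fact spelled out more carefully than in the paper, which simply asserts the base property. (The corollary of course also needs the existence of such a $\mc U$, which you take as given; that is legitimate here, since Theorem \ref{corson} provides it.)

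The one step where your argument delivers less than you need is the last one. In the paper, ``$\mc A$ fills $\mc B$'' means $U=\bigcup\{V\in \mc A: V\subsetneq U\}$ with \emph{proper} containment, and irresolvability of $\mc U$ is the nonexistence of a partition into two parts each filling $\mc U$ in this strict sense. What you actually derive from ``$\mb B_i$ is a base'' is only $U=\bigcup\{V\in \mc U_i: V\subseteq U\}$, which is vacuously true whenever $U\in \mc U_i$ and hence does not by itself contradict irresolvability. The repair is one line: since $\mc U$ fills its finite intersections, it fills itself, so given $x\in U\in \mc U$ first choose $W\in \mc U$ with $x\in W\subsetneq U$, and only then invoke the base property of $\mb B_i$ to get $[V]\in \mb B_i$ with $[x]\in [V]\subseteq [W]$; by your (1) and (3) this yields $V\in \mc U_i$ with $x\in V\subsetneq U$, so each $\mc U_i$ strictly fills $\mc U$ and the contradiction stands. (The paper runs the equivalent contrapositive --- a witness that some $\mc U_i$ fails to fill $\mc U$ produces a point at which $\mb B_i$ is not a base --- and glosses over the same strictness issue.)
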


\section{A 0-dimensional, Hausdorff space with an irresolvable base}
\label{conirres}

In this section, we partially strengthen Corollary \ref{T0irres} by showing

\begin{thm}
It is consistent that there is a first countable, 0-dimensional, $T_2$ space
which has a point countable,  irresolvable base. Furthermore, the space
has size $\mathfrak c$ and weight ${\omega}_1$. 
\end{thm}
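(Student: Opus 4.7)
The plan is to upgrade the construction from Section \ref{irressec} by using a ccc forcing to manufacture a space whose base is indexed by a version of the poset $\mb P$ from Theorem \ref{part}. Recall that in the Corson compact example of Theorem \ref{corson}, the underlying set $X$ consists of maximal chains of $\mb P$ and the family $\{V_p : p \in \mb P\}$ is an irresolvable self-filling family; however that $X$ is compact (hence Lindel\"of), so Corollary \ref{Lindres} forces base resolvability of the ambient topology even though the distinguished subfamily is irresolvable. The goal therefore is to replace the space of maximal chains by a much larger, non-Lindel\"of set of ``abstract branches'', arranged so that the clopen sets $U_p$ actually form a base (not merely a filling family).

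First, I would fix $\mb P$ as in Theorem \ref{part}, together with the function $c$ from Fact \ref{unbound}. I would then define a ccc forcing $\mb Q$ of size $\omega_1$ whose conditions are finite approximations of the form $(F, s, t, \sigma)$ where $F \in [\mb P]^{<\omega}$, $s$ is a finite set of ``vertices'' (future points of $X$), $t$ assigns to each vertex $v \in s$ a finite ``chain type'' $t(v) \subseteq F$ which is an initial segment of some branch through $\mb P$, and $\sigma$ records finitely many promised separations of distinct vertices by some $U_p$ with $p \in F$. The generic object produces a set $X$ of size $\mathfrak c$ (by a standard name-counting argument) together with sets $U_p = \{x \in X : p \in t_x\}$ for $p \in \mb P$, where $t_x$ is the full generic chain type of $x$. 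One uses density arguments to force: (a) every $U_p$ is clopen, with complement the union of $U_q$ for $q$ incomparable with $p$ (the existence of such $q$ is property (2) of Theorem \ref{part}); (b) for every $x \in X$ the family $\{U_p : p \in t_x\}$ is a neighborhood base at $x$, which is countable by property (1) of Theorem \ref{part}; (c) each $U_p$ has $\omega_1$-many points, while each point lies in only countably many $U_p$.

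The verification of ccc is where I expect the work to concentrate. Given uncountably many conditions, I would apply the $\Delta$-system lemma to the sets $F$ and $s$, refine so that all conditions look identical outside the root, and then use Fact \ref{unbound} applied to the $\omega_1$-indexed disjoint finite subsets of $\mb P$ appearing in the roots to find two conditions whose promised separations and chain types can be amalgamated: the large values of $c$ guarantee, via clause (iii) of Theorem \ref{part}, that no element of one condition accidentally becomes comparable with something in the other. Point-countability of the resulting base and first-countability both follow from the fact that $t_x$ is a branch and branches in $\mb P$ are countable with each element having finitely many predecessors. Hausdorffness and $0$-dimensionality follow from clause (b) above.

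Finally, irresolvability is essentially forced by the combinatorics already developed. Given any partition $\mc B = \mc B_0 \cup \mc B_1$ of the base $\{U_p : p \in \mb P\}$ in the extension, transfer it to a partition $\mb P = D_0 \cup D_1$ and apply $\mb P \to (I_\omega)^1_2$ to obtain $i < 2$ and an increasing chain $p_0 < p_1 < \cdots$ with $[p_0, p_n] \subseteq D_i$ for all $n$; extend it via Observation \ref{maxchain} to a maximal chain, and use a further density argument to ensure that some point $x \in X$ realizes this chain as its type $t_x$ on a tail. Any basic neighborhood $U_p$ of $x$ contained in $U_{p_{n_0}}$ must satisfy $p \in [p_{n_0}, p_n]$ for some $n \geq n_0$, hence $U_p \in \mc B_i$; therefore $\mc B_{1-i}$ is not a neighborhood base at $x$ and by Proposition \ref{first}(3) it is not a base at all. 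The main obstacle throughout is balancing the amalgamation step of the ccc proof against the density requirements needed for Hausdorffness and for realizing maximal chains as point-types; this is precisely where the ``much more subtle and technical'' details promised in the introduction enter.
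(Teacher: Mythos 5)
Your overall strategy --- a ccc forcing with finite conditions, verified by a $\Delta$-system/amalgamation argument driven by the function $c$ of Fact \ref{unbound}, producing a branch space whose basic open sets are indexed by a poset with a homogeneity property --- is genuinely in the spirit of what the paper does. But the specific plan of keeping the ground-model poset $\mb P$ of Theorem \ref{part} fixed and only forcing the set of points has a gap that I do not see how to repair. In $\mb P$, \emph{every} pair of elements $y,w$ has a common upper bound: the enumeration $\{(y_\gamma,w_\gamma)\}$ runs over all pairs, and $t_\gamma$ lies above both $y_\gamma$ and $w_\gamma$. Consequently, as soon as $X$ is rich enough that each $U_r$ is nonempty (which your density clause (a) and the irresolvability argument both require), any two basic sets $U_p, U_q$ intersect, so the topology generated by $\{U_p:p\in\mb P\}$ on any such branch space is as far from Hausdorff as possible. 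To separate points you would have to prune $X$ so that, for the relevant pairs $p,q$, no point of $X$ goes above any common upper bound of $p$ and $q$; but then $U_p\subseteq U_{p_{n_0}}$ no longer implies $p_{n_0}\le p$ (killing the final step of your irresolvability argument), and, worse, the homogeneous chain produced by $\mb P \to (I_\omega)^1_2$ --- which is applied to a partition that exists only in the extension --- need not be realizable as the type of any surviving point. Your proposed ``further density argument'' to realize that chain cannot be run, because the chain is not available when the dense sets must be specified; this is a genuine circularity between the pruning needed for Hausdorffness and the realization needed for irresolvability.

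The paper avoids this by not fixing the poset at all: the forcing conditions are finite approximations $\<A,\preceq,I,\{T_\alpha\},f,g\>$ to the order itself, which is generically a union of everywhere $\omega$-branching trees $T_\alpha$ ($\alpha\in L_1$) on $\oo_1\times\oo$; the points of the space are the cofinal branches of these trees, so two distinct points automatically diverge inside a tree or live in different trees, and the functions $f,g$ carried by the conditions record the levels at which the corresponding $U$-sets become disjoint (giving $T_2$ and clopenness, hence $0$-dimensionality). The partition relation is replaced by the condition (G\ref{irres}) about four-element increasing sequences chosen \emph{in the extension}, and it is verified by the twins/$\Delta$-system amalgamation you anticipated, with the common top element $t$ added to two amalgamated conditions on the fly rather than read off from a ground-model enumeration. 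Size $\mf c$ then comes for free from each $T_\alpha$ having $2^\oo$ cofinal branches. If you want to salvage your version, the essential missing idea is that the comparability structure (in particular, which pairs acquire common upper bounds, and where branches are forced to separate) must itself be part of what is forced.
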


\begin{proof}

For $\<{\alpha},n\>, \<{\beta},m\>\in {\omega}_1\times {\omega}$
write $\<{\alpha},n\>\triangleleft \<{\beta},m\>\in {\omega}_1\times {\omega}$
iff $\<{\alpha},n\>= \<{\beta},m\>$
or (${\alpha}<{\beta}$ and $n<m$ ).

\begin{dfn}\label{df:und_cup}
If $\preceq_1,\preceq_2\subs \triangleleft$, then let 
$\preceq_1\underline{\cup}\preceq_2$ be the partial order generated by
$\preceq_1{\cup}\preceq_2$.
 \end{dfn}

\begin{dfn}
If $\mc A=\<{\omega_1}\times {\omega}, \preceq\>$ is a poset with 
$\preceq\subs \triangleleft$, and for each ${\alpha}\in L_1$ 
we have a set $T_{\alpha}\subs {\alpha}\times {\omega}$ 
such that 
\begin{enumerate}[(C)]
 \item  \label{Cand}    
$\<T_{\alpha},\preceq\>$ is an everywhere ${\omega}$-branching tree, 
\end{enumerate}
then we say that the pair $\<\mc A, \<T_{\alpha}:{\alpha}\in L_1\>\>$
is a {\em candidate}.
\end{dfn}
Denote by $T_{\alpha}(n)$ the $n^{th}$ level of the tree 
$\<T_{\alpha},\preceq\>$.

\begin{dfn}
 Fix a candidate $\mbb A=\<\mc A, \<T_{\alpha}:{\alpha}\in L_1\>\>$.
We will define a topological space $X(\mbb A)$ as follows.

For ${\alpha}\in L_1$ let $\mc B(T_{\alpha})$ be the collection of the 
cofinal branches  of $T_{\alpha}$, and let
\begin{equation}\notag
\mc B(\mbb A)=\bigcup\{\mc B(T_{\alpha}):{\alpha}\in L_1\}. 
\end{equation}

The underlying set of the space $X(\mbb A)$ is $\mc B(\mbb A)$. 

For $x\in \ooto$ let $U(x)=\{y\in \ooto:x\preceq y\}$
and 
\begin{equation}\notag
 V(x)=\{b\in \mc B(\mbb A):\exists y\in b \ (x\preceq y)\}.
\end{equation}
Clearly $V(x)=\{b\in \mc B(\mbb A):b\subseteq^* U(x)\}$ where $\subseteq^*$ denotes containment modulo finite.

We declare that the  family
\begin{equation}\notag
\mc V=\{V(x):x\in \ooto\} 
\end{equation}
is the base of  $X(\mbb A)$.
\end{dfn}

\begin{lm}
$\mc V$ is a base and so 
$X(\mbb A)$ is a topological space. Moreover, $\mc V$ is point countable.  
\end{lm}
\begin{proof}
Assume that $b\in V(x)\cap V(y)$.
Then there is $z\in b$ such that $x\preceq z$ and $y\preceq z$.
Then $b\in V(z)\subs V(x)\cap V(y)$. 

To see that $\mc V$ is point countable, note that $b\notin V(x)$ if $b\in \mc B(T_\alpha)$ and $x\in (\omega_1\setm \alpha)\times \oo $. 
\end{proof}

For $x,y\in \ooto$ with $x\preceq y$ let
\begin{equation}\notag
 [x,y]=\{t\in \ooto: x\preceq t\preceq y\}.
\end{equation}

\begin{dfn}
We say that a candidate $\mbb A=\<\mc A, \<T_{\alpha}:{\alpha}\in L_1\>\>$
is {\em good} iff 
\begin{enumerate}[({G}1)]
\item \label{coherent} $V(u)\supset V(v)$ iff $u\preceq v$.
\item \label{Ta_cofinal}
$\forall {\alpha}\in L_1$ $\forall {\zeta}<{\alpha}$
$(T_{\alpha}\setm ({\zeta}\times {\omega}))\ne \empt$.
 \item\label{T2}
\begin{enumerate}[(a)]
 \item 
$\forall {\alpha}\in L_1$ $(\forall x,y\in T_{\alpha})$
$U(x)\cap U(y)\ne \empt$ iff $x$ and $y$ are $\preceq$-comparable.
\item
for each $\{{\alpha},{\beta}\}\in \br L_1;2;$ 
there is $f({\alpha},{\beta})\in {\omega}$ such that 
$$\text{$\forall x\in T_{\alpha}(f({\alpha},{\beta}))$ 
$\forall y\in T_{\beta}(f({\alpha},{\beta}))$
$U(x)\cap U(y)=\empt$}.$$
\end{enumerate}
\item\label{0-dim} For each $x\in \ooto$ and ${\alpha}\in L_1$
there is $g(x, {\alpha})\in{\omega}$ such that for each 
$y\in T_{\alpha}(g(x,{\alpha}))$
 $$\text{$U(y)\subs U(x)$ or $U(y)\cap U(x)=\empt.$}$$
\item\label{irres} If for  all ${\alpha}\in L_1$ 
and ${\zeta}<{\alpha}$
we choose a four element $\preceq$-increasing sequence  
\begin{equation}\notag
\<x^{\alpha}_{\zeta}, y^{\alpha}_{\zeta},z^{\alpha}_{\zeta},w^{\alpha}_{\zeta}
\>
\subs T_{\alpha}\setm ({\zeta}\times {\omega})
 \end{equation}
then there are $\{{\alpha},{\beta}\}\in \br L_1;2;$,
${\zeta}<{\alpha} $, ${\xi}<{\beta}$,
and $t\in T_{\alpha}\cap T_{\beta}$
such that  
\begin{enumerate}[(i)]
 \item $y^{\alpha}_{\zeta}\prec t$ and $[x^{\alpha}_{\zeta}, t]=
[x^{\alpha}_{\zeta}, y^{\alpha}_{\zeta}]\cup \{t\}$, 
\item $w^\beta_{\xi}\prec t$
and $[z^{\beta}_{\xi}, t]=
[z^{\beta}_{\xi}, w^{\beta}_{\xi}]\cup \{t\}$.
\end{enumerate}
\end{enumerate}

\end{dfn}

Basically (G3) will force the space to be Hausdorff, (G4) ensures that each $V(x)$ is clopen and (G5) will be used in proving irresolvability. Indeed, we have

\begin{lm}
If $\mbb A$ is a good candidate, then $X(\mbb A)$ is a  dense-in-itself, 
first countable,
 0-dimensional $T_2$ space  such that the base $\{V(x):x\in \ooto\}$ is
point countable and 
irresolvable.
\end{lm}

\begin{proof}
We prove this lemma in several steps.

 \begin{clm}
$X(\mbb A)$ is dense-in-itself.
\end{clm}

Indeed, assume that $b\in B(T_{\alpha})$ and $V(x)$ is an open 
neighbourhood  of $b$. Then there is $y\in b$ with $x\preceq y$
and so $b\in V(y)\subs V(x)$.
Thus $V(x)\supset V(y)\supset \{b'\in B(T_{\alpha}):y\in b'\} $,
and so $V(x)$  has $2^{\omega}$ many elements. So $b$ is not isolated. 

\begin{clm}
$X(\mbb A)$ is $T_2$. 
\end{clm}

Indeed, let $b\in B(T_{\alpha})$ and $c\in  B(T_{\beta})$ so that $b\neq c$.

If ${\alpha}={\beta}$ then pick $n\in \oo$ such that 
$x$,  the $n^{th}$ element of $b$, and 
$y$,  the $n^{th}$ element of $c$, are different.
Then $b\in V(x)$, $c\in V(y)$ and $V(x)\cap V(y)=\empt$ by (G\ref{T2})(a).

If ${\alpha}\ne {\beta}$ then write $n=f({\alpha},{\beta})$  (see G\ref{T2})(b)),
let  
$x$ be the   $n^{th}$ element of $b$, and let 
$y$ be   the $n^{th}$ element of $c$.
Then $b\in V(x)$, $c\in V(y)$ and $V(x)\cap V(y)=\empt$ by (G\ref{T2})(b).

\begin{clm}
Each set in $\{V(x):x\in \oo_1\times \oo\}$ is clopen, thus  $X(\mbb A)$ is 0-dimensional. 
\end{clm}

Indeed, assume that $x\in \ooto$, $b\in \mc B(T_{\alpha})$ and 
$b\notin V(x)$. Let $\{y\}= b\cap T_{\alpha}(g({\alpha},x))$.
Then $y\notin U(x)$ because $b\notin V(x)$, so $U(x)\cap U(y)=\empt$
by (G\ref{0-dim}). Thus $V(x)\cap V(y)=\empt$ as well.

\begin{clm}
The base $\{V(x):x\in \ooto\}$ is irresolvable.
 \end{clm}

Assume on the contrary that there is a partition $(K_0, K_1)$ of $\ooto$
such that both $\mc V_0=\{V(x):x\in K_0\}$ and 
$\mc V_1=\{V(x):x\in K_1\}$ are bases.

Assume that   ${\alpha}\in L_1$,  $x,y\in T_{\alpha}$ with $x\preceq y$ and  
$i\in 2$. We say that 
interval $[x,y]$ is  {\em $i$-maximal in $T_{\alpha}$} iff 
\begin{enumerate}[(i)]
 \item $[x,y]\subs K_i$, but $[x,z]\not\subs K_i$
for any $z$ with $y\prec z\in T_{\alpha}$.
\end{enumerate}

\begin{subclaim}
If ${\alpha}\in L_1$  and $x\in T_{\alpha}\cap K_i$,
then there is $x\preceq y\in T_{\alpha}$ such that the interval 
$[x,y]$ is $i$-maximal in $T_{\alpha}$.
\end{subclaim}

\begin{proof}[Proof of the Claim]
Assume on the contrary that there is no such $y$.
Then we can construct a strictly increasing sequence 
$\<x,y_0, y_1,\dots\>$ in $T_{\alpha}$ such that 
$[x, y_n]\subs K_i$ for all $n<{\omega}$.

Then $b=\{y\in T_{\alpha}:\exists n\in {\omega}\ y\preceq y_n\}\in \mc
B(T_{\alpha})$.

Since  $b\in V(x)$, and we assumed that
$\{V(z):z\in K_{1-i}\}$ is a base, 
 there is $z\in K_{1-i}$ with $b\in V(z)\subs V(x)$.  
Then $x\preceq z$ by (G\ref{coherent}). Moreover, there is $y\in b$
with $z\prec y$ because $b\in V(z)$. Thus $z\in [x,y]\cap K_{1-i}$, so 
$[x,y]\not\subs K_i$. Contradiction, the subclaim is proved.
\end{proof}

Using the subclaim,
for all $ {\alpha}\in L_1$ and for all ${\zeta}<{\alpha}$
we will construct  a four element $\preceq$-increasing sequence  
$$\<x^{\alpha}_{\zeta}, y^{\alpha}_{\zeta},z^{\alpha}_{\zeta},w^{\alpha}_{\zeta}
\>
\subs T_{\alpha}\setm ({\zeta}\times {\omega})$$
as follows. 

First, using (G\ref{Ta_cofinal}) pick 
$s^{\alpha}_{\zeta}\in T_{\alpha}\setm ({\zeta}\times {\omega})$.

If $K_0\cap U(s^{\alpha}_{\zeta})\cap T_{\alpha}=\empt$, then let 
$x^{\alpha}_{\zeta}=y^{\alpha}_{\zeta}=s^{\alpha}_{\zeta}$.

Otherwise pick  
$$x^{\alpha}_{\zeta}\in K_0\cap U(s^{\alpha}_{\zeta})\cap T_{\alpha},$$
and then, using the Subclaim above, pick 
\begin{equation}\notag
y^{\alpha}_{\zeta}\in U(x^{\alpha}_{\zeta})\cap T_{\alpha} 
\end{equation}
such that 
\begin{equation}\notag
\text{$[x^{\alpha}_{\zeta},y^{\alpha}_{\zeta}]$ is $0$-maximal in
$T_{\alpha}$}. 
\end{equation}

If $K_1\cap U(y^{\alpha}_{\zeta})\cap T_{\alpha}=\empt$, then let 
$z^{\alpha}_{\zeta}=w^{\alpha}_{\zeta}=y^{\alpha}_{\zeta}$.

Otherwise pick  
$$z^{\alpha}_{\zeta}\in K_1\cap U(y^{\alpha}_{\zeta})\cap T_{\alpha},$$
and then, using the Subclaim above, pick 
\begin{equation}\notag
w^{\alpha}_{\zeta}\in U(z^{\alpha}_{\zeta})\cap T_{\alpha} 
\end{equation}
such that 
\begin{equation}\notag
\text{$[z^{\alpha}_{\zeta},w^{\alpha}_{\zeta}]$ is $1$-maximal in
$T_{\alpha}$}. 
\end{equation}

By (G\ref{irres}),  there are $\{{\alpha},{\beta}\}\in \br L_1;2;$,
${\zeta}<{\alpha} $, ${\xi}<{\beta}$,
and $t\in T_{\alpha}\cap T_{\beta}$
such that  
\begin{enumerate}[(i)]
 \item $y^{\alpha}_{\zeta}\prec t$ and $[x^{\alpha}_{\zeta}, t]=
[x^{\alpha}_{\zeta}, y^{\alpha}_{\zeta}]\cup \{t\}$, 
\item $w^\beta_{\xi}\prec t$
and $[z^{\beta}_{\xi}, t]=
[z^{\beta}_{\xi}, w^{\beta}_{\xi}]\cup \{t\}$.
\end{enumerate}

Assume first that $t\in K_0$. Then 
$t\in K_0\cap  T_{\alpha}$, 
and $[x^{\alpha}_{\zeta}, t]=
[x^{\alpha}_{\zeta}, y^{\alpha}_{\zeta}]\cup \{t\}$, so 
$[x^{\alpha}_{\zeta}, t]\subs  K_0$, i.e. 
$[x^{\alpha}_{\zeta},y^{\alpha}_{\zeta}]$ was not $0$-maximal in $T_{\alpha}$.
Contradiction.
If $t\in K_1$, then a similar argument works 
using the interval $[z^{\beta}_{\xi},w^{\beta}_{\xi}]$ and $K_1$.

So in both cases we obtained a contradiction, so 
the base $\{V(x):x\in \ooto\}$ is irresolvable, 
which proves the lemma.
\end{proof}
 
Next we show that some c.c.c. forcing introduces a good candidate which finishes the proof the theorem.

Define the poset $\mc P=\<P,\le\>$ as follows.
The underlying set consists of  6-tuples
\begin{equation}\notag
 \<A, \preceq, I, \{T_{\alpha}:{\alpha}\in I\}, f,g \>,
\end{equation}
where
\begin{enumerate}[(P1)]
 \item\label{P-basic} $A\in \br \ooto;<{\omega};$, $\<A,\preceq\>$ is a poset,
$\preceq\subs \triangleleft$, $I\in \br \ooone;<{\omega};$, 
\item \label{P-tree}$T_{\alpha}\subs (A\cap {\alpha})\times {\omega}$ 
and $\<T_{\alpha},\preceq\>$ is a tree for ${\alpha}\in I$,
\item \label{P-fg}$f$ and $g$ are  functions, $\dom(f)\subs \br I;2;$, 
$\dom(g)\subs A\times I$, $\ran(f)\cup\ran(g)\subs {\omega}$
\item \label{P-T2}
To simplify our notation write $U(x)=\{y\in A: x\preceq x\}$ for $x\in A$.
\begin{enumerate}[(a)]
 \item 
 If ${\alpha}\in I$ and $x, y\in T_{\alpha}$ then 
$U(x)\cap U(y)\ne \empt $ iff $x$ and $y$ are $\preceq$-comparable.

\item If $\{{\alpha}, {\beta}\}\in \br \dom(f);2;$ and   $n=f({\alpha},{\beta})$,    
then 
\begin{equation}\notag
\text{
 $U[ T_{\alpha}(n)]\cap U[T_{\beta}(n)]=\empt$
 and $U[T_{\alpha}(n)]\cap T_{\beta}(<n)=\empt$.} 
\end{equation}
\end{enumerate}
\item \label{P-0-dim} if $\<x, {\alpha} \>\in \dom (g)$
then for all $y\in T_{\alpha}(g(x, {\alpha}))$
we have $U(y)\subs U(x)$ or $U(y)\cap U(x)=\empt$.  
\end{enumerate}

For $p\in P$
write $p=\<A^p, \preceq^p, I^p, \{T^p_{\alpha}:{\alpha}\in I^p\}, f^p,g^p \>$, 
and for $x\in A^p$ let $U^p(x)=\{y\in A^p:x\preceq^p y\}$.

For $p,q\in P$ let $p\le q$ iff
\begin{enumerate}[(O1)]
 \item $A^p\supset A^q$, and $\preceq^q=\preceq^p\restriction A_q$,
\item $I^p\supset I^q$ and $T^q_{\alpha}= T^p_{\alpha}\cap A^q$ for 
${\alpha}\in I^q$,
\item if $x\in A^p\setm A^q$, then $U^p(x)\cap A^q=\empt$,
\item $f^p\supset f^q$ and $g^p\supset g^q$,
\item if $U^q(x)\cap U^q(y)=\empt$ then  $U^p(x)\cap U^p(y)=\empt$.
\end{enumerate}

Clearly $\le$ is a partial order on $P$.

For $p\in P$ write $\supp(p)=I^p\cup \{{\alpha}:\<{\alpha},n\>\in A^p\text{
 for some }n\in {\omega}\} $.

If $\mc G$ is a $\mc P$-generic filter, then let
\begin{gather}\notag
A=\bigcup\{A^p:p\in \mc G\},\\\notag
\preceq=\bigcup\{\preceq^p:p\in \mc G\},\\\notag
I=\bigcup\{I^p:p\in \mc G\},\\\notag
T_{\alpha}=\bigcup\{T^p_{\alpha}:{\alpha}\in p\in \mc G\}
\text{ for ${\alpha}\in L_1$},\\\notag
f=\bigcup\{f^p:p\in \mc G\},\\\notag
g=\bigcup\{g^p:p\in \mc G\}.
\end{gather}

We  show that 
$\mc P$ satisfies c.c.c. and 
$\mbb A=\<\<\ooto,\preceq\>,\{T_{\alpha}:{\alpha}\in L_1\}\>$
is a good candidate.

\begin{dfn}
We say that the conditions $p$ and $q$ are {\em twins} iff conditions (T1)-(T7) below are satisfied:
\begin{enumerate}[(T1)]
 \item $|\supp(p)|=|\supp(q)|$, moreover   
$\max(\supp(p)\cap \supp(q))<\min(\supp(p)\bigtriangleup
\supp(q))$,
\end{enumerate}
Denote by $\rho$ the unique
order preserving  bijection between  $\supp(p)$ and $\supp(q)$, 
and define the function $\rhop:\supp(p)\times\omega  \to \supp(q)\times \omega$   
 by the formula
$\rhop(\<{\alpha},n\>)=\<\rho({\alpha}),n\>$.
\begin{enumerate}[(T1)]\addtocounter{enumi}{1}
\item $\rhop''A^p=A^q$ , 
\item $x \preceq^p y$  iff $\rhop(x)\preceq^q\rhop(y) $,
\item $\rho''I^p=I^q$,
\item $T^q_{\rho({\alpha})}=\rhop'' T_{\alpha}$,
\item $f^p(x,y)=m$ iff $f^q(\rhop(x),\rhop(y))=m$,
\item $g^p(x,{\alpha})=m$ iff $g^q(\rhop(x),\rho({\alpha}))=m$.
\end{enumerate}
\end{dfn}

\begin{lm}\label{lm:twin}
If $p$ and $q$ are twins
then 
\begin{multline}\notag
p\oplus q=\\\<A^p\cup A^q, \preceq^p\cup \preceq ^q, I^p\cup I^q,
 \{T^p_{\alpha}\cup T^q_{\alpha}: {\alpha}\in I^p\cup I^q\},f^p\cup f^q,
g^p\cup g^q\> 
\end{multline}
is a common extension of $p$ and $q$, where $T^p_{\alpha}=\empt$
for ${\alpha}\in I^q\setm I^p$ and $T^q_{\alpha}=\empt$
for ${\alpha}\in I^p\setm I^q$. 
\end{lm}

\begin{proof}
 Straightforward.
\end{proof}

\begin{lm}\label{lm:varphi}
There is a function $\varphi$ from  $P$ into some countable set such that if 
$\varphi(p)=\varphi(q)$ and $\supp(p)\cap \supp(q)<\supp(p)\bigtriangleup
\supp(q)$,
then $p$ and $q$ are twins. 
\end{lm}

\begin{proof}
Let $\varphi(p)$ be the type of the first order structure
\begin{equation}\notag
 \<\supp(p)\times {\omega},A^p,\preceq^p,  I^p,\{T^p_{\alpha}:{\alpha}\in I^p\},
f^p, g^p\>.
\end{equation}
\end{proof}

Lemmas \ref{lm:twin} and \ref{lm:varphi} yield that 
 $\mc P$ satisfies c.c.c

\begin{lm}
$A=\ooto$,  $I=L_1$  and $T_{\gamma}(0)\setm ({\zeta}\times {\omega})$ is 
infinite for all $\gamma\in L_1$ and ${\zeta}<\gamma$, and so  
(G\ref{Ta_cofinal}) holds.
\end{lm}

\begin{proof}
For  $p\in P$, $\gamma\in L_1$ and  $y\in (\gamma\times \omega )\setm A^p$ define 
$p\uplus \{y\}_{\gamma}$ as follows: 
\begin{multline}
\notag
p\uplus\{y\}_{\gamma}=\\
\<A^p\cup\{y\}, \preceq^p, I^p\cup \{\gamma\},
\{T^p_{\gamma}\cup\{y\}, T^p_{\alpha}:{\alpha}\in I^p\setm\{\gamma\}\}, 
f^p, g^p\>. 
 \end{multline}
Then $q=p\uplus\{y\}_{\gamma}\in P$ and $p\uplus\{y\}_{\gamma}\le p$.
If  $y\notin \zeta\times \omega$ then $q\Vdash y\in T_\gamma\setm (\zeta\times \omega)$ 
so  we are done.
\end{proof}

\begin{lm}\label{lm:p-x-y-gamma}
(a) Assume that $p\in P$,  
$a\in T^p_{\gamma}$ and 
 $b\in ({\gamma}\times {\omega})\setm A^p$ with  
$a\triangleleft b$. 
Let 
\begin{multline}\notag
p\uplus_a\{b\}_{\gamma}=\\\<A^p\cup\{b\}, \preceq^p\underline{\cup}\{\<a,b\>\}, 
\{T^p_{\gamma}\cup\{b\}, T^p_{\alpha}:{\alpha}\in I^p\setm\{\gamma\}\}, f^p,
g^p\>.  
\end{multline}
Then $p\uplus_a\{b\}_{\gamma}\in P$ and
$p\uplus_a\{b\}_{\gamma}\le p$. 

\medskip\noindent(b) 
The structure $\mbb A$ is a candidate. 
\end{lm}

\begin{proof}
First we check $q=p\uplus_a\{b\}_{\gamma}\in P$.

\smallskip\noindent
\pref{P-basic}-\pref{P-fg} are straightforward.
 
\smallskip\noindent
\pref{P-T2}(a):  Since $U^q(b)=\{b\}$, we can assume that $x,y\ne b$.
If $U^p(x)\cap U^p(y)\ne \empt$ then $x$ and $y$ are $\preceq^p$-comparable.
So we can assume that $b\in U^q(x)\cap U^q(y)$.
But then $a\in U^p(x)\cap U^p(y)$, so we are done.  

\smallskip\noindent
\pref{P-T2}(b):
Assume that $x\in T^q_{\alpha}(n)$, $y\in T^q_{\beta}(n)$ with $n=f^p(\alpha,\beta)=f^q(\alpha,\beta)$
and $z\in U^q(x)\cap U^q(y)$. If $z\ne b$ then
$z\in U^p(x)\cap U^p(y)$ which is not possible. So $z=b$.

If $x,y\ne b$, then  $a\in U^p(x)\cap U^p(y)$ which is not possible.
So we can assume that $x=b$ and $\alpha=\gamma$.
So $b\in T^q_\alpha(n)$ and so $a\in T^p_\alpha(n-1)$.
Thus $T^p_{\alpha}(n-1)\cap  U^p(y)\ne \empt$
which is not possible because \pref{P-T2}(b) holds for $p$.

Assume that $x\in T^q_{\alpha}(n)$, $y\in T^q_{\beta}(<n)$
and $y\in U^q(x)$. If $y\ne b$ then
$y\in U^p(x)\cap T^p_{\beta}(<n)$ which is not possible. So $y=b$
and $\beta=\gamma$.
Thus $a\in T^p_\beta(<n)\cap U^p_\alpha(x)$
which is not possible because \pref{P-T2}(b) holds for $p$.

\smallskip\noindent
\pref{P-0-dim}
Since $U(b)=\{b\}$, we can assume that $y\in A^p$.
Since   $b\in U^q(z)$ iff $a\in U^q(z)$ for $z\in A^p$,
if   $U^p(y)\subs U^p(x)$  then $U^q(y)\subs U^q(x)$,
and if   $U^p(y)\cap U^p(x)=\empt$  then 
$U^q(y)\cap U^q(x)=\empt$.

Thus we proved $q\in P$. Since $q\le p$ is straightforward, we are done.

\smallskip\noindent
(b) is clear from (a) by standard density arguments. 
\end{proof}

Now our aim is to prove that $\mb A$ is a good candidate.
\begin{lm}
$\mbb A$ has property (G\ref{coherent}). 
\end{lm}

\begin{proof}
Assume that $p\in P$, $u,v\in A^p$, $v\notin U^p(u)$. 
Pick $\gamma\in L_1\setm I^p$ with $\supp(p)\subs \gamma$, and
pick $b\in \gamma\times {\omega}$ with $v\triangleleft b$. 

Consider the condition {$q=p\uplus_v\{b\}_{\gamma}\le p$.}

Since $b\in T^q_{\gamma}$, we have $V(b)\cap \mc B(T_\gamma)\ne \empt$,
so $V(b)\ne \empt$.   Since $U^q(u)\cap U^q(b)=\empt$
we have $U(u)\cap U(b)=\empt$, and  so $V(u)\cap V(b)=\empt$,     
and so $\empt\ne V(b)\subs V(v)\setm V(u)$. 
\end{proof}

\begin{lm}
$\dom(f)=\br L_1;2;$ and $\dom(g)=(\ooto)\times L_1$.
Hence (G\ref{T2}) and (G\ref{0-dim})  holds.
\end{lm}

\begin{proof}
Assume that $\{{\gamma}, {\delta}\}\in \br I^p;2;\setm \dom(f^p)$.

Pick $m$ such that $T^p_{\alpha}(m)=\empt$ for all ${\alpha}\in I^p$.

Extends $f^p$ to $f^q$ as follows: 
$\dom(f^q)=\dom(f^p)\cup \big\{\{\gamma,\delta\}\big\}$
and $f^q(\gamma, \delta)=m$.

Let 
\begin{equation}\notag
q=\<A^p, \preceq^p, I^p, \{T^p_{\alpha}:{\alpha}\in I^p\}, f^q, g^p\>. 
\end{equation}
Then $q\in P$ and $q\le p$.

Similar argument works for $g$.
\end{proof}

Finally we  verify that (G\ref{irres}) also holds.

Assume that 
\begin{multline}\notag
 V^P\models \forall {\alpha}\in L_1\ \forall {\zeta}<{\alpha}\\
 \<x^{\alpha}_{\zeta}, y^{\alpha}_{\zeta},z^{\alpha}_{\zeta},w^{\alpha}_{\zeta}
\>
\subs T_{\alpha}\setm ({\zeta}\times {\omega})\text{ is $\preceq$-increasing.}
\end{multline}
For all $ {\alpha}\in L_1$ and $ {\zeta}<{\alpha}$ pick a condition 
$p^{\alpha}_{\zeta}=\<A^{\alpha}_{\zeta},\preceq^{\alpha}_{\zeta},\dots\>$ which
decides the sequence 
$\<x^{\alpha}_{\zeta}, y^{\alpha}_{\zeta},z^{\alpha}_{\zeta},w^{\alpha}_{\zeta}
\>$ and 
$\{x^{\alpha}_{\zeta}, y^{\alpha}_{\zeta},z^{\alpha}_{\zeta},w^{\alpha}_{\zeta}
\}\subs T^{\alpha}_{\zeta}$.

Let us  say that a $\Delta$-system $\mc A\subs \br \oo;<{\omega};$
is {\em nice} iff $A\cap B<A\bigtriangleup B$ for all $A\ne B\in \mc A$.

Using the Fodor lemma,
for each ${\zeta}\in \ooone$ find $m_{\zeta}<{\omega}$ and $I_{\zeta}\in \br
L_1;\ooone;$
such that 
\begin{enumerate}[(i)]
 \item $\varphi(p^{\alpha}_{\zeta})=m_{\zeta}$ for all $ {\alpha}\in I_{\zeta}$,
where $\varphi$ is from Lemma \ref{lm:varphi}.
 
\item $\{\supp(p^{\alpha}_{\zeta}):{\alpha}\in I_{\zeta}\}$
forms a nice $\Delta$-system with kernel $S_{\zeta}$,
moreover ${\alpha}\in \supp(p^{\alpha}_{\zeta})\setm S_{\zeta}$.
\item $\<x^{\alpha}_{\zeta},
y^{\alpha}_{\zeta},z^{\alpha}_{\zeta},w^{\alpha}_{\zeta}
\>=\<x_{\zeta}, y_{\zeta},z_{\zeta},w_{\zeta}
\>$ for ${\alpha}\in I_{\zeta}$.
\end{enumerate}
Then  $\{x^{\alpha}_{\zeta},
y^{\alpha}_{\zeta},z^{\alpha}_{\zeta},w^{\alpha}_{\zeta}
\}=\{x_{\zeta},
y_{\zeta},z_{\zeta},w_{\zeta}
\}\subs S_{\zeta}\times {\omega}$.

Find $m\in {\omega}$  and $I\in \br \ooone;\ooone;$ such that
\begin{enumerate}[(i)]\addtocounter{enumi}{3}
 \item $m_{\zeta}=m$ for all ${\zeta}\in I$, and so 
\begin{equation}\notag
 \forall {\zeta}\in I\ \forall {\alpha}\in I_{\zeta}\
\varphi(p^{\alpha}_{\zeta})=m.
\end{equation}
\item $\{S_{\zeta}:{\zeta}\in I\}$ forms a nice $\Delta$-system with kernel
$S$. 
\end{enumerate}
 
Pick $\{\xi,{\zeta}\}\in \br I;2;$.
Then pick ${\alpha}\in I_{\zeta}$ such that 
$S_{\xi}\cup S_{\zeta}<\supp(p^{\alpha}_{\zeta})\setm S_{\zeta}$.
So 
\begin{equation}\notag
S<(S_{\xi}\cup S_{\zeta})\setm S<\supp(p^{\alpha}_{\zeta})\setm S_{\zeta}.
 \end{equation}
Now pick ${\beta}\in I_\xi$ such that 
$\supp(p^{\alpha}_{\zeta})<\supp(p^{\beta}_\xi)\setm S_{\xi}$.
So 
\begin{equation}\notag
S<(S_{\xi}\cup S_{\zeta})\setm S<\supp(p^{\alpha}_{\zeta})\setm S_{\zeta}<
\supp(p^{\beta}_\xi)\setm S_{\xi}.
 \end{equation}
Thus $\supp (p^{\alpha}_{\zeta})\cap \supp (p^{\beta}_\xi)=S$,
${\alpha}\in \supp(p^{\alpha}_{\zeta})\setm S_{\zeta}$ and 
${\beta}\in \supp(p^{\beta}_\xi)\setm S_{\xi}$.

Since $\varphi (p^{\alpha}_{\zeta})= \varphi (p^{\beta}_\xi)$,
the conditions 
$\varphi (p^{\alpha}_{\zeta})$ and $ \varphi (p^{\beta}_\xi))$
are twins, and 
\begin{equation}\notag
q=p^{\alpha}_{\zeta}\oplus p^{\beta}_\xi 
\end{equation}
is a common extension.
Pick $t\in ({\alpha}\times {\omega})\setm (A^{\alpha}_{\zeta}\cup
A^{\beta}_{\zeta})$
with $y_{\zeta}\triangleleft t$ and $w_{\xi}\triangleleft t$.

Define $r$ as follows:
\begin{multline}\notag
r=\<A^q, \preceq_q\underline{\cup} \<y_{\zeta},t\>\underline{\cup} \<w_{\xi},t\>,I^q,\right .
\\\left.\{T^q_{\alpha}\cup\{t\},T^q_\beta\cup\{t\},T^\gamma:\gamma\in
I^q\setm\{{\alpha},{\beta}\}\},
f^q, g^q\>.
\end{multline}

\begin{figure}[h!]

\begin{tikzpicture}
 \path (0,0) coordinate (origin);
 \path (0,-0.5) coordinate (xstart);
 \path (11,-0.5) coordinate (xend);
 \path (-0.5,0) coordinate (ystart);
 \path (-0.5,4) coordinate (yend);
 \path (yend)  ++ (0,0.5)  coordinate (ylabel);
 \path (xend)  ++ (0.5,0)  coordinate (xlabel);

\path (1,-0.5) coordinate (S);
\path (1,-1) coordinate (Slabel);
\path (3,-0.5) coordinate (Szeta);
\path (3,-1) coordinate (Szetalabel);
\path (5.5,-0.5) coordinate (Sxi);
\path (5.5,-1) coordinate (Sxilabel);
\path (8,-0.5) coordinate (Szetaa);
\path (8,-1) coordinate (Szetaalabel);
\path (10,-0.5) coordinate (Sxib);
\path (10,-1) coordinate (Sxiblabel);
\path  (7,3) coordinate (t);
\path (t)  ++ (0.2,0.2) coordinate (tlabel);

\draw (xstart) -- (xend);
\draw (ystart) -- (yend);

\draw  (ylabel) node {$\omega$};
\draw  (xlabel) node {$\omega_1$};

\draw (S) ellipse (0.5 cm and 0.2cm);
\draw (Szeta) ellipse (1 cm and 0.2cm);
\draw (Sxi) ellipse (1 cm and 0.2cm);
\draw (Sxib) ellipse (0.5 cm and 0.2cm);
\draw (Szetaa) ellipse (0.5 cm and 0.2cm);

\draw (Slabel)  node {\tiny$S$};
\draw (Szetalabel)  node {\tiny$S_{\zeta}\setm S$};
\draw (Sxilabel)  node {\tiny$S_{\xi}\setm S$};
\draw (Szetaalabel)  node {\tiny $\supp p^{\alpha}_{\zeta}\setm S_{\zeta}$};
\draw (Sxiblabel)  node {\tiny $\supp p^{\beta}_{\xi}\setm S_{\xi}$};

\draw[fill] (t) circle (0.05cm); 
\draw (tlabel) node {\small $t$};

\draw (0.5, 0.5) rectangle (1.5, 2.5);  
\draw[dotted] (2, 0.5) rectangle (4, 2.5);  

\draw[dashed] (4.5, 0.5) rectangle (6.5, 2.5);

\draw[dotted] (7.5, 0.5) rectangle (8.5, 2.5);  

\draw[dashed] (9.5, 0.5) rectangle (10.5, 2.5);

\draw (1,4)  node {\tiny $p^{\alpha}_{\zeta}\restriction S$};

\draw (3,4)  node {\tiny $p^{\alpha}_{\zeta}\restriction (S_{\zeta}\setm S)$};

\draw (5.5,4)  node {\tiny $p^{\beta}_{\xi}\restriction (S_{\xi}\setm S)$};

\draw (1,4)  node {\tiny $p^{\alpha}_{\zeta}\restriction S$};

\draw (8,4)  node {\tiny $p^{\alpha}_{\zeta}$};

\draw (10,4)  node {\tiny $p^{\beta}_{\xi}$};

\draw[fill] (2.2,0.8) circle (0.05cm); 
\draw[fill] (2.5, 1.2) circle (0.05cm) coordinate (yz); 
\draw[fill] (2.8, 1.6) circle (0.05cm); 
\draw[fill] (3.1, 2.0 ) circle (0.05cm); 

\draw[fill] (2.2,0.8) ++ (-0.2,0.2)  node {\tiny $x_{\zeta}$}; 
\draw[fill] (2.5, 1.2) ++ (-0.2,0.2)  node {\tiny $y_{\zeta}$}; 
\draw[fill] (2.8, 1.6) ++ (-0.2,0.2)  node {\tiny $z_{\zeta}$}; 
\draw[fill] (3.1, 2.0 ) ++ (-0.2,0.2)  node {\tiny $w_{\zeta}$};

\draw (2.2,0.8) -- (2.5, 1.2)  -- (2.8, 1.6) -- (3.1, 2.0 ) ;

\draw[fill] (4.7,0.8) circle (0.05cm); 
\draw[fill] (5, 1.2) circle (0.05cm); 
\draw[fill] (5.3, 1.6) circle (0.05cm); 
\draw[fill] (5.6, 2.0 ) circle (0.05cm) coordinate (wx); 

\draw[fill] (4.7,0.8) ++ (-0.2,0.2)  node {\tiny $x_{\xi}$}; 
\draw[fill] (5, 1.2) ++ (-0.2,0.2)  node {\tiny $y_{\xi}$}; 
\draw[fill] (5.3, 1.6) ++ (-0.2,0.2)  node {\tiny $z_{\xi}$}; 
\draw[fill] (5.6, 2.0 ) ++ (-0.2,0.2)  node {\tiny $w_{\xi}$};

\draw (4.7,0.8)  -- (5, 1.2) -- (5.3, 1.6) -- (5.6, 2.0 );

\draw[dotted] (t) -- (yz);
\draw[dotted] (t) -- (wx);

\draw[fill] (Szetaa) circle (0.05cm); 
\draw[fill] (Sxib) circle (0.05cm); 

\draw (Szetaa) ++ (0.2,0.1) node {\tiny $ {\alpha}$};
\draw (Sxib) ++ (0.2,0.1) node {\tiny $ {\beta}$};

\end{tikzpicture}

\end{figure}

To check $r\in P$
we will use the following observation:
\begin{equation}\label{key1}
   r\restriction (\supp(p^{\alpha}_{\zeta})\cup \{t\})=
p^{\alpha}_{\zeta}\uplus_{y^{\alpha}_{\zeta}}\{t\}_{\alpha}
\end{equation}
and
\begin{equation}\label{key2}
   r\restriction (\supp(p^{\beta}_{\xi})\cup \{t\})=
p^{\beta}_{\xi}\uplus_{w^{\beta}_{\xi}}\{t\}_{\beta}.
 \end{equation}

Now let us check (P1)--(P5).

\smallskip\noindent{\pref{P-basic}} is trivial for $r$.

\smallskip\noindent{\pref{P-tree}}.  Let $\gamma\in I^q$.
If $\gamma\ne{\alpha},{\beta}$, then $T^q_\gamma=T^p_\gamma$,
so we are done.

Moreover, $T^r_{\alpha}=T^q_{\alpha}\cup\{t\}$,  $t\in {\alpha}\times
{\omega}$, 
and $\<T^r_{\alpha},\preceq\>$ is a tree by (\ref{key1}) and 
(\ref{key2}).

The same argument works for $T^r_{\beta}$.

\smallskip\noindent{\pref{P-fg}} is trivial.

\smallskip\noindent{\pref{P-T2}}(a).
Assume that $\gamma\in I^r$, $x,y\in T^r_{\gamma}$ with 
$U^r(x)\cap U^r(y)\ne \empt$. Since $U^r(t)=\{t\}$ we can assume
$x,y\in A^q$.  

Assume that $\gamma\in I^{\alpha}_{\zeta}$.
Then $T^q_\gamma\subs A^{\alpha}_{\zeta}$,
and so $x,y \in A^{\alpha}_{\zeta}$.
Thus  $t\in U^r(x)\cap U^r(y)$ implies $y^{\alpha}_{\zeta}\in U^r(x)\cap U^r(y)$.
So $U^q(x)\cap U^q(y)\ne \empt$, which yields that  $x$ and $y$
are $\preceq^q$ comparable because $q\in P$. 

Similar argument works when $\gamma\in I^{p^{\beta}_{\xi}}$.

\smallskip\noindent{\pref{P-T2}}(b).
Assume that $\{{\alpha}',{\beta}'\}\in \dom(f^r)=\dom(f^q)=
\dom(p^{\alpha}_{\zeta})\cup \dom(p^{\beta}_{\xi})$.   We
can assume that $\{{\alpha}',{\beta}'\}\in \dom(p^{\beta}_{\xi})$.

Write $n=f^r(\{{\alpha}', {\beta}'\})$.

(i) Assume on the contrary that there are $a\in T^r_{{\alpha}'}(n)$ and 
$b\in T^r_{{\beta}'}(n)$ with $U^r(a)\cap U^r(b)\ne \empt$.

First assume that $\{a,b\}\in \br A^q;2;$.
Since $q\in P$, we have $U^q(a)\cap U^q(b)=\empt$.
So $t\in U^r(a)\cap U^r(b)$ should hold.

If $c\in A^{\alpha}_{\zeta}$, then 
$t\in U(c)$ implies 
$y_{\zeta}\in U(c)$ by \ref{key1}.
Similarly, if $c\in A^{\beta}_{{\xi}}$, then 
$t\in U(c)$ implies 
$w_{\xi}\in U(c)$ by \ref{key2}.

Since $U^q(a)\cap U^q(b)=\empt$,
we can assume that $a\in A^{\alpha}_{\zeta}\setm
A^{\beta}_{\xi}$ 
and $b\in A^{\beta}_{\xi}\setm A^{\alpha}_{\zeta}$.

But then ${\alpha}'\in \supp({p^{\alpha}_{\zeta}})\setm S$
and ${\beta}'\in \supp({p^{\beta}_{\xi}})\setm S$,
so $f^r({\alpha}',{\beta}')$ is undefined. Contradiction.

So we can assume that e.g $t=a$ and $b\in A^q$.
Assume first that $b\in A^{p^{\alpha}_{\zeta}}$.
Then ${\alpha}'=\alpha$ and $y_{\zeta}\in A^{\alpha}_{\zeta}$
by (\ref{key1}).
Thus 
$y_{\zeta}\in T^{p^{\alpha}_{\zeta}}_{{\alpha}}(<n)\cap 
U^{p^{\alpha}_{\zeta}}(b)$, and so 
$T^{p^{\alpha}_{\zeta}}_{{{\alpha}}}(<n)\cap 
U[T^{p^{\alpha}_{\zeta}}_{{\beta}'}(n)]\ne \empt$, 
so \pref{P-T2}(b) fails for $p^{\alpha}_{\zeta}$.

If $b\in A^{\beta}_{\xi}$, then we can use similar arguments
using (\ref{key2}) instead of (\ref{key1}).

(ii)
Assume on the contrary that there are $a\in T^r_{{\alpha}'}(n)$ and 
$b\in T^r_{{\beta}'}(<n)\cap U^r(a) $.

Clearly $a\ne t$.
If $ b\ne t$, then $a\in T^q_{{\alpha}'}(n)$ and 
$b\in T^q_{{\beta}'}(<n)\cap U^q(a)$ which contradicts $q\in P$.

Assume that  $b=t$. 
If $b\in A^{p^{\alpha}_{\zeta}}$, then 
(\ref{key1}) implies ${\beta}'={\alpha}$ and 
$y_{\zeta}\in U^q(a)\cap T^q(<n)$. 
Thus $y_{\zeta}\in T^q_{{\beta}'}(<n)\cap U^q(a)$, which contradicts
$q\in P$.

If $b\in A^{p^{\beta}_{\xi}}$, then we can use similar arguments 
using (\ref{key2}) instead of (\ref{key1}).

\smallskip\noindent{\pref{P-0-dim}}.
Let $\<x, {\gamma} \>\in \dom (g^r)$ and  
$y\in T^r_{\gamma}(g(x, \gamma))$

Since $U^r(t)=\{t\}$, we can assume that $x,y\ne t$.

So $x,y\in A^q$. If $U^q(y)\subs U^q(x)$, then $x\preceq^q y$
and so $U^r(y)\subs U^r(x)$.

Assume on the contrary that  $U^q(x)\cap U^q(y)=\empt$, but $t\in U^r(x)\cap
U^r(y)$. 

We can assume that $\<x,\gamma\>\in g^{p^{\alpha}_{\zeta}}$.
Thus $x\in A^{\alpha}_{{\zeta}}$ and $\gamma\in I^{\alpha}_{{\zeta}}$.

However $T^q_\gamma\subs A^{\alpha}_{\zeta}$, so $y\in A^{\alpha}_{\zeta}$.

Since $x,y\in A^{\alpha}_{\zeta}$ and $\gamma \in I^{\alpha}_{\zeta}$, 
$t\in  U^r(x)\cap U^r(y)$ implies 
$y_{\zeta}\in  U^{p^{\alpha}_{\zeta}}(x)\cap U^{p^{\alpha}_{\zeta}}(y)$ 
by (\ref{key1}), which contradicts  $U^q(x)\cap U^q(y)=\empt$.

\medskip

So we proved 
$ r\in P$.

\medskip

Next we show  that $r\le p^{\alpha}_{\zeta}, p^{\beta}_{\xi}$.
(O1)--(O4) are trivial. To check (O5), assume on the contrary that 
$U^{p^{\alpha}_{\zeta}}(a)\cap U^{p^{\alpha}_{\zeta}}(b)=\empt$,
but  $U^r\cap U^r(b)\ne\empt$.

Then $t\in U^r(a)\cap U^r(b)$,
and so $y^{\alpha}_{\zeta}\in U^{p^{\alpha}_{\zeta}}(a)\cap
U^{p^{\alpha}_{\zeta}}(b)$ by (\ref{key1}),
which is a contradiction.

\medskip

Finally, it is also straightforward that 
\begin{equation}
r\Vdash \text{ (G\ref{irres})(i)--(ii)  holds for
${\alpha},{\beta},{\zeta},{\xi}$, and $t$.} 
\end{equation}
So we proved the theorem.
\end{proof}

\section{Open problems} \label{problems}

In this section, we present a list of  open problems which could be of further
interest and are closely connected to our results.

\begin{prob}Is every linearly ordered space base resolvable? 	
\end{prob}

\begin{prob} Is every $T_3$ (hereditarily) separable space base resolvable?
\end{prob}

\begin{prob} Is every paracompact space base resolvable?
\end{prob}

Note that under PFA, every $T_3$ hereditarily separable space is Lindel\"of
hence base resolvable by Corollary \ref{Lindres}. Also, we conjecture that our
forcing construction can be modified to produce a separable non base resolvable
space.

\begin{prob} Is every power of $\mb R$ base resolvable? Is it true that base
resolvability is preserved by products?
\end{prob}

We know that every $\pi$-base is the union of two disjoint $\pi$-bases by
Proposition \ref{first}(2). However:

\begin{prob} Does every base contain a disjoint base and $\pi$-base? 
\end{prob}

Bases closed under finite unions are resolvable by Corollary \ref{finunion} which
raises to following question:

\begin{prob} Is it true that every base which is closed under finite intersections
is base resolvable?
\end{prob}

It would be interesting to look into the following:

\begin{prob} Is every self filling family $\mc F$ of closed (Borel) sets of $\oo^\oo$ resolvable? 
\end{prob}

Concerning negligible subsets we ask the following:

\begin{prob} Is there a base $\bs$ for some space $X$ such that every $\mc U \in
[\bs]^{|\bs|}$ contains a neighborhood base at some point?
\end{prob}

\end{document}